\documentclass{article}

\usepackage[twoside,
paperwidth=210mm,
paperheight=297mm,
textheight=622pt,
textwidth=468pt,
centering,
headheight=50pt,
headsep=12pt,
footskip=18pt,
footnotesep=24pt plus 2pt minus 12pt,
columnsep=2pc]{geometry}

\usepackage[auth-sc]{authblk}

\usepackage{mathtools}

\allowdisplaybreaks

\usepackage{amssymb}
\usepackage[mathscr]{eucal}

\usepackage{mismath}

\usepackage{hyperref}

\usepackage{bm} 

\usepackage{graphicx}
\usepackage{pgf}
\usepackage{pgfplots}
\pgfplotsset{compat=1.18}

\graphicspath{{figs/}}

\usepackage{tikz}
\usetikzlibrary{math}
\usetikzlibrary{calc}

\usepackage{subcaption}

\usepackage{multirow}

\usepackage{makecell}
\setcellgapes{2pt}

\usepackage{booktabs}

\usepackage{siunitx}
\usepackage{algorithm}
\usepackage{algpseudocode}

\usepackage[skins]{tcolorbox}
\newtcolorbox{stepbox}[2][]{%
  enhanced,
  attach boxed title to top center={yshift=-3mm,yshifttext=-1mm},
  colframe=blue!75!black,
  colbacktitle=red!80!black,
  fonttitle=\bfseries,
  title=#2,#1
}

\newcommand{\Real}{\mathbb{R}}

\DeclareMathOperator{\Span}{span}

\DeclarePairedDelimiter{\RoundBrackets}{(}{)}

\usepackage{amsthm}

\usepackage[capitalise]{cleveref}

\newtheorem{theorem}{Theorem}[section]
\newtheorem{lemma}[theorem]{Lemma}

\theoremstyle{definition}

\newtheorem{assumption}{Assumption}

\crefname{assumption}{assumption}{assumptions}
\Crefname{assumption}{Assumption}{Assumptions}
\theoremstyle{remark}
\newtheorem*{remark}{Remark}
\usepackage[numbers,sort&compress]{natbib}
\crefname{equation}{}{}

\usepackage[utf8]{inputenc} 
\usepackage[T1]{fontenc}    
\usepackage{microtype}       
\usepackage{lineno}

\providecommand{\keywords}[1]{\textbf{\textit{Keywords---}} #1} 

\title{Efficient Multiscale Methods for Highly Heterogeneous Spatial Network Models}
\author[1]{Yingjie Zhou}
\author[1]{Xiang Zhong\thanks{Corresponding author.
(Email address: \href{mailto:xzhong@math.cuhk.edu.hk}{xzhong@math.cuhk.edu.hk})}}
\author[1]{Changqing Ye}
\author[1]{Eric T. Chung}
\affil[1]{Department of Mathematics, The Chinese University of Hong Kong, Shatin, Hong~Kong~SAR, China.}
\date{}
\begin{document}
\maketitle
\begin{abstract}
  Modeling complex spatial networks with multiscale heterogeneity poses significant mathematical and computational challenges. Lacking explicit PDE discretizations and facing excessive degrees of freedom, conventional methods often become computationally prohibitive. To address these challenges, we propose an efficient multiscale modeling for highly heterogeneous spatial networks. We construct multiscale basis functions tailored to spatial network models with heterogeneous edge weights and node degrees. A key novelty is that the proposed method doesn't introduce geometric parameters (such as Dirichlet nodes, distances, or mesh sizes), thereby preserving its purely algebraic nature and ensuring broad applicability. By incorporating a subgraph-wise estimate, we define a Poincaré constant $C_{\mathrm{po}}$ that renders the method independent of the underlying graph geometry. Then through an appropriate choice of the number of graph oversampling layers, we establish an $O(C_{\mathrm{po}})$ convergence independent of the local heterogeneity contrast. Notably, our scheme operates entirely within an algebraic framework, eliminating the need for Dirichlet nodes and positive-definiteness on specific matrices arising in the model. This flexibility enables the simulation of a wider range of physical models and accommodates various boundary conditions. Rigorous theoretical analyses are provided under suitable assumptions, and extensive numerical experiments validate the effectiveness of the proposed approach.

\end{abstract}

\keywords{spatial network models, highly heterogeneity, multiscale method} 

\section{Introduction}






Spatial network models are essential for analyzing complex physical systems with intricate connectivity and multiscale heterogeneity, with broad applications across science and engineering. In fiber-based materials such as paper, nonwoven fabrics, and composites, these models capture stochastic fiber geometry to predict thermal, mechanical, and transport properties \cite{KU2012,IH2009}. For porous media flow, network representations link pore-scale physics to macroscopic transport in reservoirs and fuel cells \cite{W2022,C2012}. In biomedical applications, vascular network models are key to understanding hemodynamics, oxygen and nutrient delivery \cite{A2010}. Wave propagation in geophysical and acoustic systems also relies on efficient modeling in heterogeneous networks \cite{GLM2023}. Despite their utility as reduced models, spatial network models can still pose significant computational challenges when solving large-scale systems derived from governing equations on complex network topologies.

Spatial network models with a large number of degrees of freedom (DOFs) often lead to algebraic systems that are difficult to solve directly. Black-box preconditioners, such as Algebraic Multigrid (AMG) methods, are widely used to accelerate this process by exploiting the matrix graph structure \cite{M1987,XZ2017}. However, AMG methods are iterative in nature and lack interpretability, as they do not provide an explicitly upscaled model. In the context of PDE-based models, a common strategy for dimension reduction is to integrate coefficient heterogeneity into basis function construction. This principle underpins numerous multiscale methods, including multiscale finite element methods (MsFEMs) \cite{YTY2009,HTY1997}, the localized orthogonal decomposition method (LOD) \cite{AMDP2014}, variational multiscale finite element methods \cite{HFJL1998}, numerical upscaling \cite{PDR2016}, heterogeneous multiscale methods (HMM) \cite{EW2003,EWBE2003,BEYH2005} and density-equalizing method \cite{LyuChoiLui2024a, LyuLuiChoi2024b}.

By analogy, the heterogeneity in network models—encoded in edge weights and node degrees—plays a role similar to coefficient variations in PDEs. This suggests that incorporating ideas from multiscale computational methods could be a promising avenue for deriving an upscaled network model. A key distinction, however, is that network models lack the explicit coarse grids used in PDE discretizations. Instead, general graph partitions must serve as a substitute for two-scale nested grids. In special cases where the network exhibits periodicity, homogenization theory becomes relevant \cite{BLP2011,JKO2012}. As the periodic cell size approaches zero, the network model is expected to converge to a continuum limit. Notably, mechanics research indicates that this limit may involve nonlocal effects or high gradients, deviating from classical homogenization theories \cite{B2017,DK2017,KM2004}.

Recent advancements in numerical methods for spatial network models have predominantly utilized LOD and subspace decomposition techniques to construct efficient coarse-scale approximations~\cite{K2020,E2024,ILW2010}, with recent extensions including algebraic multiscale methods~\cite{HMM2023}, super-localized approaches~\cite{HM2024}, and preconditioned iterative solvers~\cite{GHM2024}.  While these methods have demonstrated robustness under specific network assumptions (e.g., homogeneity and connectivity)~\cite{E2024,HM2024}, their theoretical guarantees often rely on the construction of stable quasi-interpolation operators and exponential decay estimates of corrector functions. Ensuring these properties on complex, unstructured networks with extreme heterogeneities may require restrictive structural assumptions or large oversampling parameters. In contrast, the Constrained Energy Minimization Generalized Multiscale Finite Element Method (CEM-GMsFEM) \cite{Eric2018,EricYETY2023,chung2025locking,ChungKimZhong2026,JinLiuZhongChung2025,Eric_mix_2018,YeEric2023,ChungHuPun2021,FEW2020,EricFu2023,EricSM2020} offers a robust alternative by constructing multiscale basis functions through energy minimization subject to coarse-grid constraints. This approach inherently handles high-contrast coefficients without relying on specific interpolation stability, provides systematic control over the energy norm error, and allows for flexibility in incorporating local spectral information, thereby providing a robust methodology inherent in scenarios with a high number of DOFs for spatial network models. 

In this work, we propose an efficient Constrained Energy Minimization Generalized Multiscale Finite Element Method for spatial network models. We consider an abstract model defined on a spatial network $\mathcal{G}=(\mathcal{N},\mathcal{E})$, where $\mathcal{N}$ and $\mathcal{E}$ denote the sets of nodes and edges, respectively. We consider two symmetric non-negative definite matrices: $L$, which encodes a weighted graph Laplacian exhibiting high contrast, and $M$, which captures the mass properties. Then we aim to solve an equation of the following form: find a function $u\in V$ such that
\begin{equation*}
 ((L+M)u,v)=(f,v) 
\end{equation*}    
for all $v\in V$, given the source term $f\in V$ and $(\cdot,\cdot)$ denoting the Euclidean scalar product taken over the nodes of the network. In the proposed ACEM-GMsFEM, we construct multiscale basis functions tailored for spatial network models with high heterogeneity in two stages.
In the first stage, we construct an auxiliary multiscale space for each subgraph by solving local spectral problems. The connections between adjacent subgraphs are carefully addressed. By incorporating a subgraph-wise Poincaré inequality, we introduce an importance constant $C_{\mathrm{po}}$ that makes the method purely algebraic and independent of graph geometry.
In the second stage, we utilize this auxiliary multiscale space to construct a multiscale space.
By appropriately selecting the number of graph oversampling layers, we achieve an $O(C_{\mathrm{po}})$ convergence, independent of the local contrast.  We provide detailed theoretical analyses and numerical experiments to validate our approach’s efficacy across challenging scenarios.  

Different from LOD-based methods such as \cite{HMM2023,HM2024,GHM2024}, our method offers several key advantages. First, it operates in a purely algebraic framework without requiring Dirichlet nodes or imposing positive-definiteness on $L$ and $M$. Second, this flexibility enables the simulation of a broader class of physical models, accommodating various boundary conditions (Dirichlet, Neumann, or Robin) for both steady and dynamic problems such as \cite{zhong2023spectral, ZhongQiu2024}. Finally, the method's analysis is free from geometric parameters (e.g., distance or mesh sizes), which underscores its purely algebraic nature (using a Poincaré constant $C_{\mathrm{po}}$) and ensures its wide applicability.

This paper is organized as follows. In Section \ref{sec:Spatial network model}, we present the spatial network model, introduce some
notation and assumption for the oprerators in the spatial network model. Coarse partition and multiscale basis functions tailored for spatial network models are constructed in Section \ref{multiscale construction}. Detailed analysis of first-order convergence with respect to the Poincaré constant $C_{\mathrm{po}}$ is presented in Section \ref{analysis}. This section further establishes that the number of small eigenvalues corresponds to the count of disconnected high-contrast inclusions or channels, and that the smallest eigenvalue remains $\mathcal{O}(1)$, independent of the contrast. In Section \ref{application}, we provide a straightforward application of our method to elliptic PDEs, yielding an approximation that retains the similar properties in the continuous setting. To validate the performance of the proposed method, we report numerical experiments in Section \ref{numerical results}. Finally, a summary of our conclusions is given in Section \ref{conclusions}.

\section{Spatial network model}\label{sec:Spatial network model}
In this section, we describe the spatial network model, introduce the notation, and state the necessary assumptions.

The spatial network we are considering is an undirected connected graph, represented by $\mathcal{G}=\{\mathcal{N},\mathcal{E}\}$, where $\mathcal{N}$ denotes the node set and $\mathcal{E}$ represents the edge set. 
For $x,y\in\mathcal{N}$, we write $x\sim y$ if $x$ and $y$ are connected by an edge. Therefore, $\mathcal{E}=\{\{x,y\}\colon x\sim y, x,y\in\mathcal{N}\}$. 
A node $x$ is a neighbor of node $y$ if there is an edge $\{x,y\}\in\mathcal{E}$. 

We define the vector space $V$ as the space of all real-valued functions defined on the nodes $\mathcal{N}$.
Since $V$ is a finite dimensional space, and it is an isomorphism to $\Real^{\# \mathcal{N}}$, while this notation may be reminiscent of PDE discretizations. 

For any subset of nodes $\mathcal{R}\subset\mathcal{N}$, we define $V_{\mathcal{R}}$ as the space of real-valued functions defined on $\mathcal{R}$.

Through the article, for a function $f$ defined on the graph, we denote $f(x)$ its value on the node $x \in \mathcal{N}$. 
We define a diagonal weighted mass operator $M\colon V\to V$ by the following sum of nodal contributions:
\begin{equation}
\label{Mass}
(Mv,w)\coloneqq\sum_{x\in\mathcal{N}}M_xv(x)w(x),
\end{equation}
where $0\leq M_x\leq \overline{M}<\infty$ and $(\cdot,\cdot)$ denotes the Euclidean inner product of vectors of nodal values of two elements in $V$. Note that $\abs{\cdot}_M^2=(M\cdot,\cdot)$ is a semi norm. For a subset $\mathcal{R}\subset\mathcal{N}$, we define localized versions of the mass operators by $(M_{\mathcal{R}}v,w)\coloneqq\sum_{x\in \mathcal{R}}M_xv(x)w(x)$. We also introduce an notation for the summation over edges as
\[
\sum_{\mathcal{R}, \sim} \coloneqq \frac{1}{2}\sum_{x\in \mathcal{R}} \sum_{y\in \mathcal{R},y \sim x}.
\]
Then
 The local weighted stiffness operator $L\colon V\to V$ is given by 
\begin{equation}
\label{def_L}
(Lv,w)\coloneqq \sum\limits_{\mathcal{N},\sim}L_{xy}(v(x)-v(y))\left(w\left(x\right)-w\left(y\right)\right),
\end{equation}
where $0<\underline{L}\leq L_{xy}\leq \overline{L}<\infty$. 
For all $x \in \mathcal{N}$, we denote $$\tilde{L}_x \coloneqq \sum_{y\in \mathcal{N},y\sim x}L_{xy}/2$$ as lumping all weights over nodes adjacent to $x$.
For subsets $\mathcal{R}\subset\mathcal{N}$, we define localized versions $L_{\mathcal{R}}$ of the operator $L$ as  $(L_{\mathcal{R}}v,w)\coloneqq\sum\limits_{x\in \mathcal{R}}\frac{1}{2}\sum\limits_{y\sim x}L_{xy}(v(x)-v(y))(w(x)-w(y))$. 
Note that $(L_{\mathcal{R}}v)(x)$ is non-zero for node $x$ outside of $\mathcal{R}$ that are adjacent to nodes in $\mathcal{R}$. 
Clearly, if $\mathcal{N}=\mathcal{R}_1 \cup \mathcal{R}_2$ with $\mathcal{R}_1 \cap \mathcal{R}_2=\varnothing$, we should have $(Lv,v)=(L_{\mathcal{R}_1}v,v)+(L_{\mathcal{R}_2}v,v)$.
Note $\abs{\cdot}_L^2=(L\cdot,\cdot)$ defines a semi-norm on $V$. Localized version of this norm for $\mathcal{R}\subset\mathcal{N}$ are denoted by $\abs{\cdot}_{L,\mathcal{R}}^2\coloneqq(L_{\mathcal{R}}\cdot,\cdot)$.

Given a source term $f\in V$, our model problem seeks a function $u\in V$ such that
\begin{equation}
\label{model problem}
    a(u,v):= ((L+M)u,v)=(f,v) \quad \forall v\in V.
\end{equation}
To simplify the discussion, throughout the paper, we give the following assumption.
\begin{assumption}
\label{assumption on spd}
The operator $L+M\colon V\to V$ is symmetric positive definite, i.e., $((L+M)v,v)>0$ for all $v\in V\setminus\{\mathbf{0}\}$.
\end{assumption}
Since $\mathcal{G}$ is connected, $\ker(L)$ consists only of globally constant functions. Assumption \ref{assumption on spd} is satisfied if either of the following conditions holds:
\begin{enumerate}
    \item[(i)] there exists at least one node $x\in\mathcal{N}$ with $M_x>0$, so that the bilinear form $(M\cdot,\cdot)$ defines an inner product on $V$ with induced norm $\abs{\cdot}_M^2=(M\cdot,\cdot)$; or
    \item[(ii)] $v(x)=0$ is enforced for at least one node $x\in\mathcal{N}$, in which case the only constant function in $V$ is the zero function and $L$ is already symmetric positive definite on $V$.
\end{enumerate}
Under Assumption \ref{assumption on spd}, $((L+M)\cdot,\cdot)$ is an inner product on $V$. 
We write the induced norm as $\norm{\cdot}_a^2=\abs{\cdot}_M^2+\abs{\cdot}_L^2$.
For subsets $\mathcal{R}\subset\mathcal{N}$, we define the localized energy norm by
$$\norm{v}_{a(\mathcal{R})}^2\coloneqq(L_{\mathcal{R}}v,v)+(M_{\mathcal{R}}v,v)=\sum_{x\in\mathcal{R}}\frac{1}{2}\sum_{y\sim x}L_{xy}(v(x)-v(y))^2+\sum_{x\in\mathcal{R}}M_xv(x)^2.$$
For $v\in V$, $\norm{\cdot}_{a(\mathcal{R})}$ is a semi-norm that localizes the global energy to $\mathcal{R}$. When restricted to a subset $\mathcal{R}\subsetneq\mathcal{N}$ with zero extension outside $\mathcal{R}$, $\norm{\cdot}_{a(\mathcal{R})}$ becomes a norm.

The unique solvability of  (\ref{model problem}) can be concluded from that $((L+M)\cdot,\cdot)$ is an inner product on $V$. Noting that $(f,\cdot)\in V'$, where $V'$ denotes the dual space of $V$, the unique solvability follows from the Riesz representation theorem.
We will discuss the construction of multiscale basis functions in the next section. We consider $V_\mathup{ms}^m$ to be the space spanned by all multiscale basis functions. Then the multiscale solution $u_\mathup{ms}$ is denoted as the solution of the following problem: find $u_\mathup{ms}\in V_\mathup{ms}^m$ such that
\begin{equation}
\label{multiscale problem}
a(u_\mathup{ms},v)=(f,v) \quad \forall v\in V_\mathup{ms}^m.
\end{equation}

The degree $d_x$ of a node $x$ is the number of its neighbors. 
For $\mathcal{R}\subset\mathcal{N}$, the degree of the node $x\in \mathcal{R}$ is denoted by $d_{\mathcal{R}}(x)$, which counts the number of nodes in $\mathcal{R}$ that are adjacent to $x$. 
We define the volume of $\mathcal{R}\subset\mathcal{N}$ by ${\rm vol}(\mathcal{R})\coloneqq\sum_{x\in\mathcal{R}}d_x$; Besides, we define the volume of $\mathcal{R}_1\subset \mathcal{R}_2\subset \mathcal{N}$ by ${\rm vol}_{\mathcal{R}_2}(\mathcal{R}_1)\coloneqq\sum_{x\in \mathcal{R}_1}d_{\mathcal{R}_2}(x)$.

\section{Coarse partition and construction of the multiscale basis functions} \label{multiscale construction}
In this section, we first introduce some basic notation for spatial network models. We will present the construction of the auxiliary multiscale basis functions. 
\subsection{Partitions}\label{partition}
Cutting a graph into smaller pieces is a fundamental algorithmic operation \cite{AHIP2016}. In graph partitioning, the key task is to divide the node set $\mathcal{N}$ into non-overlapping parts. That is, $\mathcal{N}=\cup_{i=1}^N\mathcal{N}_i$ with  $\mathcal{N}_i\cap\mathcal{N}_j=\varnothing$.  Similar to the notation of finite element meshes, we write $\mathcal{T}\coloneqq\{\mathcal{N}_i\colon 1\leq i\leq N\}$. We call $\mathcal{N}_i$ subgraph. Assume that for any partition, the corresponding subgraphs are connected. We denote the edge set $\mathcal{E}_{i,j}\coloneqq\{x\sim y\colon x\in\mathcal{N}_i,y\in\mathcal{N}_j\}$. For any $\mathcal{N}_i$, define the oversampling subgraph $\mathcal{N}_i^k$ recursively by
$$\mathcal{N}_i^k\coloneqq\mathcal{N}_i^{k-1}\cup\{\mathcal{N}_j\colon\exists x\in\mathcal{N}_j, y\in\mathcal{N}_i^{k-1}, {\rm  such \hspace{0.5em}that\hspace{0.5em}} x\sim y\}$$
with $\mathcal{N}_i^0\coloneqq\mathcal{N}_i$.

We impose a regularity assumption on the coarse partition $\mathcal{T}$, which quantifies the shape and local complexity of the subgraphs. Although the following constants always exist for any finite graph, their magnitude directly influences the convergence behavior of the proposed method.
\begin{assumption}
\label{assumption on partitions}
\begin{enumerate}
    \item[(1)] There exists a function $F\colon\mathbb{N}\to\mathbb{N}$ such that for all $1\leq i\leq N$ and $l>0$,
$$\#\{\mathcal{N}_j\colon 1\leq j\leq N,\ \mathcal{N}_j\subset\mathcal{N}_i^l\}\leq F(l).$$
    \item[(2)] There exists a constant $\tilde{C}>0$ such that for any $x\in\mathcal{N}$,
$$d_x\leq\tilde{C}.$$
\end{enumerate}
\end{assumption}

\subsection{The construction of the multiscale basis functions}
\label{construction of the multiscale basis functions}
In this section, we construct the multiscale basis functions based on the partitions proposed in Section \ref{partition}.

Let $V(\mathcal{N}_i)$ be the the space of real-valued functions defined on $\mathcal{N}_i$. 
We need to find a real number $\lambda^i_j$ and a function $\phi^i_j\in V(\mathcal{N}_i)$ such that
\begin{equation}
\label{spectral problem}
a_i(\phi^i_j,w)=\lambda^i_js_i(\phi^i_j,w)\quad \forall w\in V(\mathcal{N}_i),
\end{equation}
where
\begin{subequations}
	\label{def of a and s}
	\begin{align}
		a_i(v,w)&\coloneqq \sum_{\mathcal{\mathcal{N}}_i,\sim}L_{xy}(v(x)-v(y))(w(x)-w(y))+\sum_{x\in\mathcal{N}_i}M_xv(x) w(x), \label{def of a and s_a}\\
		s_i(v,w)&=\sum_{x\in\mathcal{N}_i}(\tilde{L}_{x}+M_x)(C_{\mathrm{po},i})^{-2}v(x) w(x).\label{def of a and s_b} 
	\end{align}
\end{subequations}
The scaling factor $C_{\mathrm{po}} > 0$ denotes the subgraph-wise Poincaré constant, which characterizes the intrinsic stability of the local neighborhood $\mathcal{N}_i$. Analytically, it is defined as the optimal embedding constant from the energy space into the weighted $L^2$ space. For notational simplicity, we omit the local index $i$ and define $C_{\mathrm{po}}$ for each $\mathcal{N}_i$ as
$
C_{\mathrm{po}}^{2} := \sup_{v \in V(\mathcal{N}_i) \setminus \{0\}} \frac{\sum_{x \in \mathcal{N}_i} (\tilde{L}_x + M_x) v(x)^2}{a_i(v, v)}.
$
The magnitude of $C_{\mathrm{po}}$ is primarily governed by the spectral gap of the local graph Laplacian and is highly sensitive to topological features, including connectivity, degree distributions, and potential bottlenecks. For simplicity, we abuse notation as $C_{\mathrm{po}} = \max_{1 \leq i \leq N} C_{\mathrm{po},i}$ in subsequent convergence analyses.

We assume the normalization that $s_i(\phi^{i}_k,\phi^{i}_l)=\delta_{kl}$. We remark that the choice of the bilinear  form (\ref{def of a and s_b}) is motivated by our convergence analysis. We let $\lambda^i_j$ the eigenvalues of (\ref{spectral problem}) arranged in ascending order. We also define $\abs{v}_{a_i}=a_i(v,v)^{\frac{1}{2}}$.
Note that by the definition of $a(\cdot,\cdot)$ and $a_i(\cdot,\cdot)$ for $1\leq i\leq N$, it is clearly that
\begin{equation}
\label{comparison of a}
\sum_{i=1}^N\abs{v}_{a_i}^2\leq\norm{v}_{a}^2
\end{equation}
for all $v\in V$.
We will use the first $l_i$ eigenfunctions to construct our local auxiliary multisacle space $V_\mathup{aux}^{i}$, where $V_\mathup{aux}^{i}={\Span}\{\phi^i_j \colon 1\leq j\leq l_i\}$. The global auxiliary multiscale space $V_\mathup{aux}$ is the sum of these local auxiliary multiscale space, namely $V_\mathup{aux}=\oplus_{i=1}^NV_\mathup{aux}^{i}$. This space is used to construct multiscale basis functions that are $\phi$-orthogonal to the auxiliary spaces defined above. The notation of $\phi$-orthogonality will be defined next.

For the local auxiliary multiscale space $V_\mathup{aux}^{i}$, the bilinear form $s_i$ in (\ref{def of a and s_b}) defines an inner product with norm $\norm{v}_{s(\mathcal{N}_i)}=\sqrt{s_i(v,v)}$. These local inner products and norms provide natural definitions of inner product and norm for $V$, which are defined by
$$s(v,w)=\sum_{i=1}^Ns_i(v,w),\quad \norm{v}_s=\sqrt{s(v,v)},\quad \forall v\in V_\mathup{aux}.$$
Let $\norm{\cdot}_s^2=s(\cdot,\cdot)$. Using the above inner product, we can define the notation of $\phi$-orthogonality in the space $V$. Given a function $\phi^i_j\in V_\mathup{aux}$, we say that a function $\psi\in V$ is $\phi^i_j$-orthogonal if
$$s(\psi,\phi^i_j)=1,\quad s(\psi,\phi^{i'}_{j'})=0,\hspace{0.5em} {\rm if}\hspace{0.5em} j'\neq j\hspace{0.5em}{\rm or}\hspace{0.5em}i'\neq i.$$
Now, we let $\pi_i\colon V(\mathcal{N}_i)\to V_\mathup{aux}^{i}$ be the projection with respect to the inner product $s_i(v,w)$. So, the operator $\pi_i$ is given by
$$\pi_i(u)=\sum_{j=1}^{l_i}\frac{s_i(u,\phi^i_j)}{s_i(\phi^i_j,\phi^i_j)}\phi^i_j,\quad \forall u\in V(\mathcal{N}_i).$$
We can easily derive the following basic estimates: for all $v\in V(\mathcal{N}_i)$,
\begin{equation}
\label{basic estimates for pi}
\norm{v-\pi_iv}_{s(\mathcal{N}_i)}^2\leq (\lambda^i_{l_i+1})^{-1}\norm{v}_{a_i}^2.
\end{equation}
\begin{equation}
\label{basic estimates for pi_2}
\norm{\pi_iv}_{s(\mathcal{N}_i)}^2=s_i(\pi_iv,v)\leq\norm{\pi_iv}_{s(\mathcal{N}_i)}\norm{v}_{s(\mathcal{N}_i)}\Rightarrow \norm{\pi_iv}_{s(\mathcal{N}_i)}\leq\norm{v}_{s(\mathcal{N}_i)}.
\end{equation}
In addition, we let $\pi\colon V\to V_\mathup{aux}$ be the projection with respect to the inner product $s(v,w)$. So, the operator $\pi$ is given by
$$\pi u=\sum_{i=1}^N\sum_{j=1}^{l_i}\frac{s_i(u,\phi^i_j)}{s_i(\phi^i_j,\phi^i_j)}\phi^i_j,\quad \forall u\in V.$$
Note that $\pi=\sum_{i=1}^N\pi_i$.

Next we consider the relaxed constraint energy minimizing generalized multiscale finite element method: find $\psi_{j,l}^i\in V(\mathcal{N}_i^l)$ such that
\begin{equation}
\label{relaxed cem}
\psi_{j,l}^i={\rm argmin}\{a(\psi,\psi)+s(\pi \psi-\phi^i_j,\pi \psi -\phi^i_j)\colon\psi\in V(\mathcal{N}_i^l)\},
\end{equation}
where $V(\mathcal{N}_i^l)$ is the the space of real-valued functions defined on $\mathcal{N}_i^l$. Our multiscale finite element space $V_\mathup{ms}^l$ is defined by
$$V_\mathup{ms}^l={\rm span}\{\psi_{j,l}^i\colon 1\leq j\leq l_i,\hspace{0.5em}1\leq i\leq N\}.$$
This minimization problem is equivalent to the following variational formulation
\begin{equation}
\label{variational_relaxed_cem}
a(\psi_{j,l}^i,v)+s(\pi \psi_{j,l}^i,\pi v )=s(\phi^i_j,\pi v),\hspace{0.5em}\forall v\in V(\mathcal{N}_i^l).
\end{equation}
The global multiscale basis function $\psi_{j}^{i}\in V$ is defined in a similar way, namely,
\begin{equation}
\label{global relaxed cem}
\psi_{j}^{i}={\rm argmin}\{a(\psi,\psi)+s(\pi(\psi)-\phi^i_j,\pi \psi-\phi^i_j)\colon \psi\in V\},
\end{equation}
which is equivalent to the following variational form
\begin{equation}
\label{global_variational_relaxed_cem}
a(\psi_{j}^{i},v)+s(\pi \psi_{j}^{i},\pi v)=s(\phi^i_j,\pi v),\quad \forall v\in V.
\end{equation}
Our multiscale finite element space $V^\mathup{glo}_\mathup{ms}$ is defined by
$$V^\mathup{glo}_\mathup{ms}={\rm span}\{\psi_{j}^{i}\colon 1\leq j\leq l_i,\hspace{0.5em}1\leq i\leq N\}.$$
This global multiscale finite element space $V^\mathup{glo}_\mathup{ms}$ satisfies a very important orthogonality property, which will be used in our convergence analysis. 

We remark that these basis functions are linearly independent. More precisely, we let $\sum_{i=1}^N\sum_{j=1}^{l_i}c_{ij}\psi_{j,l}^i=0$ and then substitute it into (\ref{variational_relaxed_cem}). 
We obtain that $0=s(\sum_{i=1}^N\sum_{j=1}^{l_i}c_{ij}\phi^i_j,\pi v)=\sum_{i=1}^N\sum_{j=1}^{l_i}c_{ij}s_i(\phi^i_j,\pi v)$ for all  $v\in V(\mathcal{N}_i^l)$.  Let the test function $v=\phi^i_j$ for all $1\leq i\leq N, 1\leq j\leq l_i$,  we clearly have $c_{ij}=0$,  for all $1\leq i\leq N, 1\leq j\leq l_i$. Thus, $\{\psi_{j,l}^i\colon 1\leq j\leq l_i,\hspace{0.5em}1\leq i\leq N\}$ are linearly independent.  
Similarly, $\{\psi_{j}^{i}\colon 1\leq j\leq l_i,\hspace{0.5em}1\leq i\leq N\}$ are also linearly independent.  

In particular, we define $\tilde{V}$ as the null space of the projection $\pi$, namely, $\tilde{V}=\{v\in V\colon \pi(v)=0\}$. Then for any $\psi^{i}_j\in V^\mathup{glo}_\mathup{ms}$, by (\ref{global_variational_relaxed_cem}), we have
$$a(\psi_{j}^{i},v)=0,\quad \forall v\in \tilde{V}.$$
That is, $\tilde{V}\subset (V^\mathup{glo}_\mathup{ms})^{\perp}$ and since  dim($V^\mathup{glo}_\mathup{ms}$)=dim($V_\mathup{aux}$), we have $\tilde{V}=(V^\mathup{glo}_\mathup{ms})^{\perp}$. Thus, we have $V=V^\mathup{glo}_\mathup{ms}\oplus\tilde{V}$.

\begin{lemma}[see \cite{YeEric2023,Eric2018}]
\label{a_orthogonality}
Let $v\in V^\mathup{glo}_\mathup{ms}$; then $a(v,v')=0$ for any $v'\in V$ with $\pi v'=0$ (i.e. for any $v'\in \tilde{V}$). Moreover, if there exists $v\in V$ such that $a(v,v')=0$ for any $v'\in  V^\mathup{glo}_\mathup{ms}$, then $\pi v=0$ (i.e. $v\in \tilde{V}$). 
\end{lemma}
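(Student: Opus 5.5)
The first claim follows directly from the variational characterization \eqref{global_variational_relaxed_cem}. I will write $v\in V^\mathup{glo}_\mathup{ms}$ as $v=\sum_{i,j}c_{ij}\psi^i_j$ and take any $v'\in V$ with $\pi v'=0$ as the test function. For each basis function this gives
$$a(\psi^i_j,v')=s(\phi^i_j,\pi v')-s(\pi\psi^i_j,\pi v')=0 .$$
Linearity in the first argument then yields $a(v,v')=0$. This is exactly the observation already recorded before the lemma, namely $\tilde V\subset (V^\mathup{glo}_\mathup{ms})^{\perp}$.

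For the converse I will use a dimension count together with the decomposition $V=V^\mathup{glo}_\mathup{ms}\oplus\tilde V$. The steps, in order, are as follows.

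\begin{enumerate}
\item Show $\dim\tilde V=\dim V-\dim V_\mathup{aux}$. Since $\pi\colon V\to V_\mathup{aux}$ is the $s$-orthogonal projection, it is surjective: for $w\in V_\mathup{aux}$ we have $\pi w=w$, because $s$ restricted to $V_\mathup{aux}$ is an inner product ($s_i$ is positive definite on $V(\mathcal{N}_i)$, as $\tilde L_x>0$ for a connected graph with at least two nodes). Rank--nullity then gives the formula.
\item Show $\dim V^\mathup{glo}_\mathup{ms}=\dim V_\mathup{aux}$. This uses the linear independence of the $\psi^i_j$, established above.
\item Conclude the orthogonal complement. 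By the first part, $\tilde V\subset (V^\mathup{glo}_\mathup{ms})^{\perp_a}$, where $\perp_a$ is taken in the inner product $a(\cdot,\cdot)$ (an inner product by Assumption~\ref{assumption on spd}). The two spaces have the same dimension $\dim V-\dim V^\mathup{glo}_\mathup{ms}$, so $\tilde V=(V^\mathup{glo}_\mathup{ms})^{\perp_a}$.
\item Apply this to the hypothesis. If $a(v,v')=0$ for all $v'\in V^\mathup{glo}_\mathup{ms}$, then $v\in (V^\mathup{glo}_\mathup{ms})^{\perp_a}=\tilde V$, i.e.\ $\pi v=0$.
\end{enumerate}

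The only delicate point is the surjectivity of $\pi$ and the positivity of $s$ on $V_\mathup{aux}$. This is where one must check that the weights $\tilde L_x+M_x$ are positive on each $\mathcal{N}_i$, which holds since every node has at least one neighbor in the connected graph $\mathcal G$.

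As a fallback in case that dimension bookkeeping is shaky, there is a direct argument. Decompose $v=v_1+v_2$ with $v_1\in V^\mathup{glo}_\mathup{ms}$ and $v_2$ the $a$-orthogonal complement part. Then $a(v_1,v_1)=a(v,v_1)=0$, so $v_1=0$, and it remains to identify the complement with $\tilde V$, which again reduces to the same dimension count.
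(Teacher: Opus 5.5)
Your proposal is correct and is essentially the paper's own argument. The first claim comes from testing \eqref{global_variational_relaxed_cem} with $v'\in\tilde V$, and the converse follows from $\tilde V\subset (V^{\mathrm{glo}}_{\mathrm{ms}})^{\perp}$ together with $\dim V^{\mathrm{glo}}_{\mathrm{ms}}=\dim V_{\mathrm{aux}}$, which comes from the linear independence of the $\psi^i_j$. Your explicit check that $\pi$ maps onto $V_{\mathrm{aux}}$, giving $\dim\tilde V=\dim V-\dim V_{\mathrm{aux}}$ via $\tilde L_x>0$, supplies the one step the paper leaves implicit.
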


\section{Analysis} \label{analysis}
This section establishes the convergence of the proposed method (Section \ref{Convergence analyses}) and characterizes the relationship between the number of small eigenvalues and that of disconnected high-contrast inclusions or channels (Section \ref{number of small eigenvalues}).

\subsection{Convergence analyses}\label{Convergence analyses}
In this section, we first prove the global convergence of the proposed method in Lemma \ref{global convergence}. To facilitate the analysis, we introduce abstract partition of unity functions in Assumption \ref{partition of unity} and define cutoff functions. Building on these, Lemma \ref{basis decay property} establishes a crucial decay property of our multiscale basis functions. Finally, by combining these results, Theorem \ref{convergence analysis} proves the local convergence of the method.

Before proving the convergence of the method, we need to define some notations. For a given subdomain $\mathcal{R}\subset\mathcal{N}$, we define the local $a$-norm and $s$-norm by
$$\norm{u}_{a(\mathcal{R})}^2=\sum_{x\in \mathcal{R}}\frac{1}{2}\sum_{y\sim x }L_{xy}(u(x)-u(y))^2+\sum_{x\in\mathcal{R}}M_xu(x)^2,$$
$$\norm{u}_{s(\mathcal{R})}^2=\sum_{x\in\mathcal{R}}(\tilde{L}_{x}+M_x)(C_{\mathrm{po}})^{-2}u(x)^2.$$
To prove the convergence of the proposed method, we will first show the convergence result of using the global multiscale basis functions. The approximation solution $u_\mathup{glo}\in V^\mathup{glo}_\mathup{ms}$ obtained in the global multiscale space $V^\mathup{glo}_\mathup{ms}$ is defined by
\begin{equation}
\label{u_glo}
a(u_\mathup{glo},v)=(f,v)\quad \forall v\in V^\mathup{glo}_\mathup{ms}.
\end{equation}
\begin{lemma}
\label{global convergence}
Let $u$ be the solution of (\ref{model problem}) and $u_\mathup{glo}$ be the solution of  (\ref{u_glo}). We have $u-u_\mathup{glo}\in\tilde{V}$ and
$$\norm{u-u_\mathup{glo}}_a\leq\frac{1}{\sqrt{\Lambda}}\norm{f}_{s^*},$$
where $$\Lambda=\min_{1\leq i\leq N}\lambda_{l_i+1}^{i} , \quad\norm{f}_{s^*}\coloneqq\sup_{v\in V, v\neq0}\frac{(f,v)}{\norm{v}_s}.$$
\end{lemma}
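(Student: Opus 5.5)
The plan is the standard CEM-GMsFEM Galerkin-orthogonality argument. First I will show that $e\coloneqq u-u_\mathup{glo}$ lies in $\tilde{V}$. Subtracting (\ref{u_glo}) from (\ref{model problem}) tested with $v\in V^\mathup{glo}_\mathup{ms}\subset V$ gives
\[
a(e,v)=0\quad\text{for all } v\in V^\mathup{glo}_\mathup{ms}.
\]
The second statement of Lemma \ref{a_orthogonality} then yields $\pi e=0$, so $e\in\tilde{V}$.

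Next I will bound the energy norm. Since $e$ is $a$-orthogonal to $V^\mathup{glo}_\mathup{ms}$ and $u_\mathup{glo}\in V^\mathup{glo}_\mathup{ms}$,
\[
\norm{e}_a^2=a(e,e)=a(u,e)=(f,e)\leq\norm{f}_{s^*}\norm{e}_s.
\]
This follows directly from (\ref{model problem}) with test function $e$ and the definition of the dual norm $\norm{\cdot}_{s^*}$.

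The remaining step is to show $\norm{e}_s\le \Lambda^{-1/2}\norm{e}_a$ for $e\in\tilde V$. The $s$-norm decomposes over subgraphs as $\norm{e}_s^2=\sum_i\norm{e|_{\mathcal{N}_i}}_{s(\mathcal{N}_i)}^2$. Since $\pi=\sum_i\pi_i$ and the spaces $V_\mathup{aux}^i$ have disjoint supports, $\pi e=0$ forces $\pi_i(e|_{\mathcal{N}_i})=0$ for each $i$. Applying (\ref{basic estimates for pi}) subgraph-wise therefore gives
\[
\norm{e|_{\mathcal{N}_i}}_{s(\mathcal{N}_i)}^2=\norm{e-\pi_ie}_{s(\mathcal{N}_i)}^2\le(\lambda^i_{l_i+1})^{-1}\abs{e}_{a_i}^2\le\Lambda^{-1}\abs{e}_{a_i}^2 .
\]
Summing over $i$ and using (\ref{comparison of a}) gives $\norm{e}_s^2\le\Lambda^{-1}\norm{e}_a^2$.

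Combining the two bounds, $\norm{e}_a^2\le\norm{f}_{s^*}\Lambda^{-1/2}\norm{e}_a$, and dividing by $\norm{e}_a$ gives the claim. If $e=0$ the claim is trivial.

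The only delicate point is the restriction argument: checking that $\pi e=0$ really implies each local projection vanishes, and that $\abs{\cdot}_{a_i}$ sees only the intra-subgraph edges, so that (\ref{comparison of a}) applies. Both follow from the block structure of $s$ and the $s_i$-orthogonality of the local eigenfunctions, so I expect no real obstacle here.
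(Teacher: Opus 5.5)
Your proposal is correct and follows essentially the same route as the paper. Galerkin orthogonality places $e=u-u_\mathup{glo}$ in $\tilde V$; then $\norm{e}_a^2=(f,e)\le\norm{f}_{s^*}\norm{e}_s$; finally the vanishing of each $\pi_i e$, together with (\ref{basic estimates for pi}) and (\ref{comparison of a}), gives $\norm{e}_s\le\Lambda^{-1/2}\norm{e}_a$. The only cosmetic difference is that you obtain $\pi e=0$ directly from the second part of Lemma~\ref{a_orthogonality}, while the paper phrases it through the identification $(V^\mathup{glo}_\mathup{ms})^{\perp}=\tilde V$ from its dimension count, which is the same fact.
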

\begin{proof}
By the definition of $u$ and $u_\mathup{glo}$, we have
$$a(u-u_\mathup{glo},v)=0,\quad \forall v\in V^\mathup{glo}_\mathup{ms}.$$
Thus, we have $u-u_\mathup{glo}\in (V^\mathup{glo}_\mathup{ms})^{\perp}=\tilde{V}$. Using the orthogonality property and problem (\ref{model problem}), we obtain
\begin{equation*}
	\label{orthogonality}
	\begin{aligned}
		a(u-u_\mathup{glo},u-u_\mathup{glo})&=a(u,u-u_\mathup{glo})=(f,u-u_\mathup{glo})\\
  &\leq \norm{f}_{s^*}\norm{u-u_\mathup{glo}}_s.
	\end{aligned}
\end{equation*}
Since $u-u_\mathup{glo}\in \tilde{V}$, we have $\pi(u-u_\mathup{glo})=0$ by Lemma \ref{a_orthogonality}. 
By $\mathcal{N}_i\cap\mathcal{N}_j=\varnothing$ for $i\neq j$, we have $\pi_i(u-u_\mathup{glo})=0$ for all $i=1,2,...,N$. Therefore, we have
$$\norm{u-u_\mathup{glo}}_s^2=\sum_{i=1}^N\norm{u-u_\mathup{glo}}_{s(\mathcal{N}_i)}^2=\sum_{i=1}^N\norm{(I-\pi_i)(u-u_\mathup{glo})}_{s(\mathcal{N}_i)}^2.$$
By (\ref{basic estimates for pi}) and  (\ref{comparison of a}), we have
$$\sum_{i=1}^N\norm{(I-\pi_i)(u-u_\mathup{glo})}_{s(\mathcal{N}_i)}^2\leq \frac{1}{\Lambda}\sum_{i=1}^N\norm{u-u_\mathup{glo}}_{a_i}^2\leq\frac{1}{\Lambda}\norm{u-u_\mathup{glo}}_{a}^2.$$
Then we have $\norm{u-u_\mathup{glo}}_a^2=a(u-u_\mathup{glo},u-u_\mathup{glo})\leq \norm{f}_{s^*}\norm{u-u_\mathup{glo}}_{a}/{\sqrt{\Lambda}}$. The desired estimate is obtained.
\end{proof}

Unlike the standard finite element setting (where the mesh is triangular or quadrilateral), we can't use the standard Lagrange basis functions in the algebraic setting. So we introduce the abstract partition of unity functions as follows.
\begin{assumption}
\label{partition of unity}
Suppose there exists an overlapped partition
$\{U_i\}_{1\leq i\leq N}$ of the node set $\mathcal{N}$ with $U_i\supset\mathcal{N}_i$ for all $1\leq i\leq N$ and corresponding partition of unity functions $\{\eta_i\}_{1\leq i\leq N}$ with supp$(\eta_i)\subset U_i$ such that the following requirements are satisfied:
\begin{enumerate}
    \item[(1)] The functions $\{\eta_i\}_{1\leq i\leq N}$ are non-negative and satisfy $\sum_{i=1}^N\eta_i\equiv 1$ on $\mathcal{N}$.
    \item[(2)] There exists a constant $C_{\eta}>0$ independent of $C_{\mathrm{po}}$ such that
$$\max_{\{x,y\}\in\mathcal{E}}\abs{\eta_i(x)-\eta_i(y)}\leq C_{\eta}(C_{\mathrm{po}})^{-1},\quad\forall 1\leq i\leq N.$$
    \item[(3)] For all $1\leq i\leq N$,
$${\rm supp}(\eta_i)\subset U_i\subset \mathcal{N}_i^1.$$
\end{enumerate}
\end{assumption}

Before estimating the difference between the global and multiscale basis function, we need to construct the cutoff function with respect to oversampling subgraphs. For $M-m\geq 2$, we define $\chi_i^{M,m}\in{\rm span}\{\eta_i\}$ such that
\begin{subequations}
	\label{cutoff function}
	\begin{align}
\chi_i^{M,m}&\equiv1\quad {\rm in}\hspace{0.5em}\mathcal{N}_i^m,\label{cutoff function_a}\\
\chi_i^{M,m}&\equiv0\quad {\rm in}\hspace{0.5em}\mathcal{N}\backslash\mathcal{N}_i^M,\label{cutoff function_b}\\
0\leq\chi_i^{M,m}&\leq1\quad {\rm in}\hspace{0.5em}\mathcal{N}_i^M\backslash\mathcal{N}_i^m.\label{cutoff function_c}
\end{align}
\end{subequations}
Note that $\chi_i^{M,m}$ constructed as above exists. 
In fact, we can take $\chi_i^{M,m}=\sum_j\eta_j$, where $j\in\{1\leq l\leq N\colon U_l\cap\mathcal{N}_i^m\neq\varnothing\}$. By this construction, we have $\chi_i^{M,m}(x)=\sum_j\eta_j(x)=1$ for all $x\in\mathcal{N}_i^m$. 
By (1) and (3) in Assumption \ref{partition of unity}, it is obvious that $0\leq \chi_i^{m+2,m}(x)\leq1$ for any node $x\in\mathcal{N}_i^{m+2}\backslash\mathcal{N}_i^m$. Then we have $0\leq \chi_i^{M,m}(x)\leq1$ for any node $x\in\mathcal{N}_i^M\backslash\mathcal{N}_i^m$. Besides, $\chi_i^{M,m}=0\quad {\rm in}\hspace{0.5em}\mathcal{N}\backslash\mathcal{N}_i^M$ is also clear because of (3) in Assumption \ref{partition of unity}. 

The following lemma shows that our multiscale basis functions have a decay property.

\begin{lemma}
\label{basis decay property}
Consider the oversampling subgraphs $\mathcal{N}_i^l$ with $l\geq2$. Let $\phi_j^{i}\in V_\mathup{aux}$ be a given auxiliary multiscale basis function. Let $\psi_{j,l}^i$ be the multiscal basis functions obtained in (\ref{relaxed cem}) and let $\psi_j^i$ be the global multiscale basis functions obtained in (\ref{global relaxed cem}). Then it holds that 
$$
\norm{\psi_j^i-\psi_{j,l}^i}_a^2+\norm{\pi(\psi_j^i-\psi_{j,l}^i)}_s^2\leq c_{\star}\theta^{l'}\RoundBrackets*{\norm{\psi_j^i}_a^2+\norm{\pi \psi_j^i}_s^2},
$$
where $c_{\star}=5\big(F(1)^2C_{\eta}^2+1\big)\big(1+\frac{1}{\Lambda}\big)$, $\theta=\tilde{c}/(\tilde{c}+1)$ and $$\tilde{c}=18(F(1)+1)(C_\eta+1)\big(\frac{1}{\sqrt{\Lambda}}+1\big).$$ 
Besides, $l'=l/2$ if $l\geq2$ is an even number or  $l'=(l-1)/2$ if $l>2$ is an odd number.
\end{lemma}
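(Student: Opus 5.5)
The plan follows the standard CEM-GMsFEM decay argument (cf.\ \cite{Eric2018,YeEric2023}), adapted to the graph setting through the cutoff functions $\chi_i^{M,m}$ and Assumption \ref{partition of unity}. Write $\norm{v}_{\star}^2\coloneqq\norm{v}_a^2+\norm{\pi v}_s^2$ and $\norm{v}_{\star(\mathcal{R})}^2\coloneqq\norm{v}_{a(\mathcal{R})}^2+\norm{\pi v}_{s(\mathcal{R})}^2$. The left-hand side of the lemma is $\norm{\psi_j^i-\psi_{j,l}^i}_{\star}^2$.

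\textbf{Step 1 (Galerkin/Céa step).} Subtracting (\ref{variational_relaxed_cem}) from (\ref{global_variational_relaxed_cem}) gives that $\psi_j^i-\psi_{j,l}^i$ is orthogonal to $V(\mathcal{N}_i^l)$ in the inner product $a(\cdot,\cdot)+s(\pi\cdot,\pi\cdot)$. Hence $\norm{\psi_j^i-\psi_{j,l}^i}_{\star}\le\norm{\psi_j^i-w}_{\star}$ for every $w\in V(\mathcal{N}_i^l)$. I take $w=\chi_i^{l,l-1}\psi_j^i$. Since $\chi\equiv1$ on $\mathcal{N}_i^{l-1}$, the function $(1-\chi)\psi_j^i$ vanishes there. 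Its $a$-norm therefore only sees nodes in $\mathcal{N}\setminus\mathcal{N}_i^{l-2}$, and edges touching $\mathcal{N}_i^{l-1}$ are included.

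\textbf{Step 2 (bounding the cut-off remainder).} For the edge terms, I expand $(1-\chi)(x)\psi(x)-(1-\chi)(y)\psi(y)$ as a product-rule splitting. One part is $\abs{1-\chi}\,\abs{\psi(x)-\psi(y)}$. The other part is $\abs{\chi(x)-\chi(y)}\,\abs{\psi(y)}$, and here I use Assumption \ref{partition of unity}(2): $\chi$ is a sum of at most $F(1)$ overlapping $\eta$'s at each edge, so this difference is at most $F(1)C_\eta C_{\mathrm{po}}^{-1}$. Summing with $\tilde L_x$ gives $\norm{(1-\chi)\psi}_a^2\lesssim\norm{\psi}_{a(\Omega)}^2+F(1)^2C_\eta^2\norm{\psi}_{s(\Omega)}^2$, where $\Omega=\mathcal{N}\setminus\mathcal{N}_i^{l-2}$.

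The $s$-norm of $\psi$ is then split as $\psi=(\psi-\pi\psi)+\pi\psi$. On each $\mathcal{N}_k$, (\ref{basic estimates for pi}) bounds $\norm{\psi-\pi_k\psi}_{s(\mathcal{N}_k)}^2\le\Lambda^{-1}\abs{\psi}_{a_k}^2$. So $\norm{\psi}_{s(\Omega)}^2\le 2(1+\Lambda^{-1})\norm{\psi}_{\star(\Omega)}^2$. The term $\norm{\pi((1-\chi)\psi)}_s$ is handled by (\ref{basic estimates for pi_2}) together with the same splitting. This yields
\[
\norm{\psi_j^i-\psi_{j,l}^i}_{\star}^2\le c_\star\norm{\psi_j^i}_{\star(\mathcal{N}\setminus\mathcal{N}_i^{l-2})}^2 .
\]

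\textbf{Step 3 (decay of $\psi_j^i$ outside $\mathcal{N}_i^{k}$; the main obstacle).} I expect to show $\norm{\psi_j^i}_{\star(\mathcal{N}\setminus\mathcal{N}_i^{k})}^2\le\theta\,\norm{\psi_j^i}_{\star(\mathcal{N}\setminus\mathcal{N}_i^{k-2})}^2$ for $k\ge2$. To do this, test (\ref{global_variational_relaxed_cem}) with $v=(1-\chi_i^{k,k-2})\psi_j^i$. Because $\phi_j^i$ is supported in $\mathcal{N}_i$, which is contained in the region where $\chi\equiv1$, the right-hand side vanishes: $s(\phi_j^i,\pi v)=0$. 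Hence
\[
a(\psi,(1-\chi)\psi)+s(\pi\psi,\pi((1-\chi)\psi))=0 .
\]
Expand $a(\psi,(1-\chi)\psi)$ as $\sum(1-\chi)\times\text{energy}$ plus a cross term involving $\nabla\chi$. Similarly, write $\pi((1-\chi)\psi)=(1-\chi)\pi\psi$ plus a commutator supported on the annulus $\mathcal{N}_i^{k}\setminus\mathcal{N}_i^{k-2}$, noting that $\chi$ is not constant on each $\mathcal{N}_k$ there. The terms with weight $1-\chi$ dominate $\norm{\psi}_{\star(\mathcal{N}\setminus\mathcal{N}_i^k)}^2$. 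The cross and commutator terms are bounded via Cauchy--Schwarz, the gradient bound $F(1)C_\eta C_{\mathrm{po}}^{-1}$, and the Step~2 $s$-to-$\star$ estimate restricted to the annulus. Together these give at most $\tilde c\,\norm{\psi}_{\star(\text{annulus})}^2$, and the constants assemble into the stated $\tilde c$ with its factor $(\Lambda^{-1/2}+1)$. Writing $\norm{\psi}_{\star(\text{annulus})}^2$ as the difference of the two tail norms, we get $\norm{\psi}_{\star(\mathcal{N}\setminus\mathcal{N}_i^{k})}^2\le\tilde c\bigl(\norm{\psi}_{\star(\mathcal{N}\setminus\mathcal{N}_i^{k-2})}^2-\norm{\psi}_{\star(\mathcal{N}\setminus\mathcal{N}_i^{k})}^2\bigr)$, which rearranges to the factor $\theta$. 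The delicate point is controlling the commutator $\pi((1-\chi)\psi)-(1-\chi)\pi\psi$, which is only available through the $\Lambda$-dependent estimates; this is where I expect most of the work.

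\textbf{Step 4 (iteration).} Iterate Step~3 from $k=l-2$ downward in steps of $2$, about $l'$ times. This is why $l'=\lfloor l/2\rfloor$, with the parity handled by starting at $\mathcal{N}_i^0$ or $\mathcal{N}_i^1$. The final tail is bounded by $\norm{\psi_j^i}_{\star}^2$, and combining with Step~2 yields the claim.
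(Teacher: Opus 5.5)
Your architecture is the paper's: Céa with a cutoff competitor, a Caccioppoli-type estimate from testing (\ref{global_variational_relaxed_cem}) with a function vanishing on $\mathcal{N}_i$, then one contraction every two layers. Step~3 is sound in outline. Your ``commutator'' is in fact harmless: $\pi$ is the $s$-orthogonal projection, so $s(\pi\psi,\pi v)=s(\pi\psi,v)$, and the only off-diagonal contribution lives on the annulus.

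\textbf{The gap is the layer count.}

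First, $\chi_i^{l,l-1}$ is not available. The paper defines $\chi_i^{M,m}$ only for $M-m\ge 2$. The reason is that any $\eta_j$ whose support meets $\mathcal{N}_i^{m}$ has $\mathcal{N}_j\subset\mathcal{N}_i^{m+1}$, so it may be nonzero up to $\mathcal{N}_i^{m+2}$. Hence $\chi_i^{l,l-1}\psi_j^i$ need not lie in $V(\mathcal{N}_i^l)$. You must take $w=\chi_i^{l,l-2}\psi_j^i$, which is what your Step~2 bound over $\mathcal{N}\setminus\mathcal{N}_i^{l-2}$ actually uses.

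Second, starting from the tail outside $\mathcal{N}_i^{l-2}$, Step~3 (valid only for $k\ge 2$) can be applied at $k=l-2,l-4,\dots$. That is exactly $\lfloor (l-2)/2\rfloor=l'-1$ times, not ``about $l'$ times''. So your argument proves the inequality with $\theta^{l'-1}$ instead of $\theta^{l'}$, and for $l=2,3$ it gives no contraction at all. You cannot run Step~3 further, down to $k=1$ or $0$. The test function must vanish on $\mathcal{N}_i$, and an admissible cutoff needs two layers to drop from $1$ to $0$.

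\textbf{How the paper gets $\theta^{l'}$.} It uses an index shift, not a new idea. It writes Galerkin orthogonality on $V(\mathcal{N}_i^{l+2})$ (``let the oversampling subgraph considered in the variational problem be $\mathcal{N}_i^{l+2}$'') and takes $w=\chi_i^{l+2,l}\psi_j^i$. The remainder then vanishes on all of $\mathcal{N}_i^{l}$, the tail is $\mathcal{N}\setminus\mathcal{N}_i^{l}$, and $l'$ contractions are available. In other words, $\theta^{l'}$ is what this technique yields for the local basis computed on $\mathcal{N}_i^{l+2}$; this is your correctly counted result, re-indexed.

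To close the proof, either adopt that convention explicitly or accept $c_\star\theta^{l'-1}=(c_\star/\theta)\,\theta^{l'}$, where $1/\theta=1+\tilde c^{-1}$. The decay rate is unaffected, but as written Step~4 claims one more contraction than Step~3 supplies.

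Two minor points matter if you want the exact $c_\star$:
\begin{itemize}
\item Drop the factor $2$ in Step~2. Since $\pi_k$ is $s_k$-orthogonal, $\norm{\psi}_{s(\mathcal{N}_k)}^2=\norm{\psi-\pi_k\psi}_{s(\mathcal{N}_k)}^2+\norm{\pi_k\psi}_{s(\mathcal{N}_k)}^2$.
\item In the product-rule splitting, evaluate $\psi$ at the endpoint lying in the tail. Otherwise crossing edges contribute values of $\psi$ inside $\mathcal{N}_i^{l-2}$.
\end{itemize}
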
 
\begin{proof}
In terms of (\ref{variational_relaxed_cem}) and (\ref{global_variational_relaxed_cem}), we have (Let the oversampling subgraph considered in the variational problem be $\mathcal{N}_i^{l+2}$)
$$a(\psi_j^i-\psi_{j,l}^i,v)+s(\pi(\psi_j^i-\psi_{j,l}^i),\pi(v))=0$$
for all $v\in V(\mathcal{N}_i^{l+2})$. Taking $v=w-\psi_{j,l}^i$  with $w\in V(\mathcal{N}_i^{l+2})$ in above relation, we have
\begin{equation*}
	\begin{aligned}
		&\quad\norm{\psi_j^i-\psi_{j,l}^i}_a^2+\norm{\pi(\psi_j^i-\psi_{j,l}^i)}_s^2\\
  &=a(\psi_j^i-\psi_{j,l}^i,\psi_j^i-w+w-\psi_{j,l}^i)+s(\pi(\psi_j^i-\psi_{j,l}^i),\pi(\psi_j^i-w+w-\psi_{j,l}^i))\\
  &=a(\psi_j^i-\psi_{j,l}^i,\psi_j^i-w)+s(\pi(\psi_j^i-\psi_{j,l}^i),\pi(\psi_j^i-w))\\
  &\leq (\norm{\psi_j^i-\psi_{j,l}^i}_a^2+\norm{\psi_j^i-w}_a^2)/2+(\norm{\pi(\psi_j^i-\psi_{j,l}^i)}_s^2+\norm{\pi(\psi_j^i-w)}_s^2)/2
	\end{aligned}
\end{equation*}
Thus, we have
$$\norm{\psi_j^i-\psi_{j,l}^i}_a^2+\norm{\pi(\psi_j^i-\psi_{j,l}^i)}_s^2\leq \norm{\psi_j^i-w}_a^2+\norm{\pi(\psi_j^i-w)}_s^2$$
for all $w\in V(\mathcal{N}_i^{l+2})$. Letting $w=\chi_i^{l+2,l}\psi_j^i$ in above relation, we have
\begin{equation}
\label{local estimate}
\norm{\psi_j^i-\psi_{j,l}^i}_a^2+\norm{\pi(\psi_j^i-\psi_{j,l}^i)}_s^2\leq \norm{\psi_j^i-\chi_i^{l+2,l}\psi_j^i}_a^2+\norm{\pi(\psi_j^i-\chi_i^{l+2,l}\psi_j^i)}_s^2
\end{equation}
Next we will estimate these two terms on the right hand side of (\ref{local estimate}). We divide the proof into four steps.

\paragraph{\textbf{Step 1:}} We will estimate $\norm{(1-\chi_i^{l+2,l})\psi_j^i}_a^2$ in (\ref{local estimate}). By the definition of the norm $\norm{\cdot}_a$ and the fact that ${\rm supp}(1-\chi_i^{l+2,l})\subset(\mathcal{N}\backslash\mathcal{N}_i^l)$, we have
\begin{equation*}
	\begin{aligned}
		\norm{(1-\chi_i^{l+2,l})\psi_j^i}_a^2&\leq2\Big(\sum_{x\in\mathcal{N}\backslash\mathcal{N}_i^{l}}\frac{1}{2}\sum_{y\sim x}L_{xy}\big((1-\chi_i^{l+2,l}(x))\cdot\psi_j^i(x)-(1-\chi_i^{l+2,l}(y))\cdot\psi_j^i(y)\big)^2\Big)\\
  &\quad+\sum_{x\in\mathcal{N}\backslash\mathcal{N}_i^{l}}M_x\big((1-\chi_i^{l+2,l}(x))\cdot\psi_j^i(x)\big)^2\\
  &=2\Big(\sum_{x\in\mathcal{N}\backslash\mathcal{N}_i^{l}}\frac{1}{2}\sum_{y\sim x}L_{xy}\big((1-\chi_i^{l+2,l}(x))\cdot\psi_j^i(x)-(1-\chi_i^{l+2,l}(y))\cdot\psi_j^i(x)\\
  &\qquad +(1-\chi_i^{l+2,l}(y))\cdot\psi_j^i(x)-(1-\chi_i^{l+2,l}(y))\cdot\psi_j^i(y)\big)^2\Big)+\sum_{x\in\mathcal{N}\backslash\mathcal{N}_i^{l}}M_x\big((1-\chi_i^{l+2,l}(x))\cdot\psi_j^i(x)\big)^2\\
  &=2\Big(\sum_{x\in\mathcal{N}\backslash\mathcal{N}_i^{l}}\frac{1}{2}\sum_{y\sim x}L_{xy}\Big(\big((1-\chi_i^{l+2,l}(x))-(1-\chi_i^{l+2,l}(y))\big)\cdot\psi_j^i(x)\\
  &\qquad +(1-\chi_i^{l+2,l}(y))\cdot\big(\psi_j^i(x)-\psi_j^i(y)\big)\Big)^2\Big) +\sum_{x\in\mathcal{N}\backslash\mathcal{N}_i^{l}}M_x\big((1-\chi_i^{l+2,l}(x))\cdot\psi_j^i(x)\big)^2\\
  &\leq4\bigg(\sum_{x\in\mathcal{N}\backslash\mathcal{N}_i^{l}}\frac{1}{2}\sum_{y\sim x}L_{xy}\Big((\chi_i^{l+2,l}(x)-\chi_i^{l+2,l}(y))^2\cdot\big(\psi_j^i(x)\big)^2\\
  &\qquad +\big(1-\chi_i^{l+2,l}(y)\big)^2\cdot\big(\psi_j^i(x)-\psi_j^i(y)\big)^2\Big)\bigg) +\sum_{x\in\mathcal{N}\backslash\mathcal{N}_i^{l}}M_x\big((1-\chi_i^{l+2,l}(x))\cdot\psi_j^i(x)\big)^2
	\end{aligned}
\end{equation*}
By (\ref{cutoff function_c}), we know
\begin{equation}
\label{ka_1}
\abs{\chi_i^{l+2,l}}\leq1,\quad\abs{1-\chi_i^{l+2,l}}\leq1.
\end{equation}
By (1) of Assumption \ref{assumption on partitions} and  (2), (3) of Assumption \ref{partition of unity}, we easily obtain
\begin{equation}
\label{ka_3}
\abs{\chi_i^{l+2,l}(x)-\chi_i^{l+2,l}(y)}\leq F(1)C_{\eta}(C_{\mathrm{po}})^{-1}
\end{equation}
\begin{equation}
\label{ka_2}
\abs{(1-\chi_i^{l+2,l}(x))-(1-\chi_i^{l+2,l}(y))}\leq F(1)C_{\eta}(C_{\mathrm{po}})^{-1}
\end{equation}
for all $x$ belonging to some $\mathcal{N}_i$ and $y\sim x$.
Then by (1), (2) of Assumption \ref{partition of unity}, we have
\begin{equation}
\label{ka_estimate}
	\begin{aligned}
\norm{(1-\chi_i^{l+2,l})\psi_j^i}_a^2&\leq4\Big(\sum_{x\in\mathcal{N}\backslash\mathcal{N}_i^{l}}\frac{1}{2}\sum_{y\sim x}L_{xy}F(1)^2C_{\eta}^2(C_{\mathrm{po}})^{-2}\big(\psi_j^i(x)\big)^2\\
  &\qquad +L_{xy}\big(\psi_j^i(x)-\psi_j^i(y)\big)^2\Big)+\sum_{x\in\mathcal{N}\backslash\mathcal{N}_i^{l}}M_x\big(\psi_j^i(x)\big)^2\\
  &\leq 4\big(F(1)^2C_{\eta}^2+1\big)(\norm{\psi_j^i}_{s(\mathcal{N}\backslash\mathcal{N}_i^{l})}^2+\norm{\psi_j^i}_{a(\mathcal{N}\backslash\mathcal{N}_i^{l})}^2).
 \end{aligned}
\end{equation}
We note that for each $\mathcal{N}_k(1\leq k\leq N)$, combining (\ref{basic estimates for pi}),  we have 
\begin{equation}
\begin{split}
\label{psi_local}
\norm{\psi_j^i}_{s(\mathcal{N}_k)}^2&=\norm{(I-\pi)(\psi_j^i)+\pi(\psi_j^i)}_{s(\mathcal{N}_k)}^2=\norm{(I-\pi)(\psi_j^i)}_{s(\mathcal{N}_k)}^2+\norm{\pi(\psi_j^i)}_{s(\mathcal{N}_k)}^2\\
&\leq \frac{1}{\Lambda}\norm{\psi_j^i}_{a_k}^2+\norm{\pi(\psi_j^i)}_{s(\mathcal{N}_k)}^2\leq \frac{1}{\Lambda}\norm{\psi_j^i}_{a(\mathcal{N}_k)}^2+\norm{\pi(\psi_j^i)}_{s(\mathcal{N}_k)}^2.
\end{split}
\end{equation}
In terms of (\ref{comparison of a}), we have
\begin{equation*}
\label{compatison sum_a}
\sum_{\mathcal{N}_k\subset\mathcal{N}\backslash\mathcal{N}_i^{l}}\norm{\psi_j^i}_{a_k}^2=\sum_{\mathcal{N}_k\subset\mathcal{N}\backslash\mathcal{N}_i^{l}}a_k(\psi_j^i,\psi_j^i)\leq \norm{\psi_j^i}_{a(\mathcal{N}\backslash\mathcal{N}_i^{l})}^2.
\end{equation*}
Using this inequality into (\ref{psi_local}), we obtain
\begin{equation}
\label{psi_set}
\norm{\psi_j^i}_{s(\mathcal{N}\backslash\mathcal{N}_i^{l})}^2=\sum_{\mathcal{N}_k\subset(\mathcal{N}\backslash\mathcal{N}_i^{(l-2)})}\norm{\psi_j^i}_{s(\mathcal{N}_k)}^2\leq\frac{1}{\Lambda}\norm{\psi_j^i}_{a(\mathcal{N}\backslash\mathcal{N}_i^{l})}^2+\norm{\pi(\psi_j^i)}_{s(\mathcal{N}\backslash\mathcal{N}_i^{l})}^2.
\end{equation}
Substituting (\ref{psi_set}) into (\ref{ka_estimate}), we have
\begin{equation}
\label{Step 1 estimate}
\norm{(1-\chi_i^{l+2,l})\psi_j^i}_a^2\leq 4\big(F(1)^2C_{\eta}^2+1\big)\big(\big(1+\frac{1}{\Lambda}\big)\norm{\psi_j^i}_{a(\mathcal{N}\backslash\mathcal{N}_i^{l})}^2+\norm{\pi(\psi_j^i)}_{s(\mathcal{N}\backslash\mathcal{N}_i^{l})}^2\big).
\end{equation}

\paragraph{\textbf{Step 2:}} We will estimate the term $\norm{\pi(\psi_j^i-\chi_i^{l+2,l}\psi_j^i)}_s^2$ in (\ref{local estimate}). By the estimate (\ref{basic estimates for pi_2}) and the fact that $\abs{1-\chi_i^{l+2,l}}\leq1$, $1-\chi_i^{l+2,l}\equiv0$ in $\mathcal{N}_i^{l}$, we have
$$\norm{\pi(\psi_j^i-\chi_i^{l+2,l}\psi_j^i)}_s^2\leq\norm{\psi_j^i-\chi_i^{l+2,l}\psi_j^i}_s^2\leq\norm{\psi_j^i}_{s(\mathcal{N}\backslash\mathcal{N}_i^{l})}^2.$$
By utilizing (\ref{psi_set}), we have
$$\norm{\pi(\psi_j^i-\chi_i^{l+2,l}\psi_j^i)}_s^2\leq\frac{1}{\Lambda}\norm{\psi_j^i}_{a(\mathcal{N}\backslash\mathcal{N}_i^{l})}^2+\norm{\pi(\psi_j^i)}_{s(\mathcal{N}\backslash\mathcal{N}_i^{l})}^2.$$
In term of Steps 1 and 2, (\ref{local estimate}) can be estimated as
\begin{equation}
\label{local estimate2}
\norm{\psi_j^i-\psi_{j,l}^i}_a^2+\norm{\pi(\psi_j^i-\psi_{j,l}^i)}_s^2\leq5\big(F(1)^2C_{\eta}^2+1\big)\big(1+\frac{1}{\Lambda}\big)\big(\norm{\psi_j^i}_{a(\mathcal{N}\backslash\mathcal{N}_i^{l})}^2+\norm{\pi(\psi_j^i)}_{s(\mathcal{N}\backslash\mathcal{N}_i^{l})}^2\big).
\end{equation}
Next we estimate the right hand side of (\ref{local estimate2}).

\paragraph{\textbf{Step 3:}} We will show that $\norm{\psi_j^i}_{a(\mathcal{N}\backslash\mathcal{N}_i^{l})}^2+\norm{\pi(\psi_j^i)}_{s(\mathcal{N}\backslash\mathcal{N}_i^{l})}^2$ can be estimated by $\norm{\psi_j^i}_{a(\mathcal{N}_i^{l}\backslash\mathcal{N}_i^{l-2})}^2+\norm{\pi(\psi_j^i)}_{s(\mathcal{N}_i^{l}\backslash\mathcal{N}_i^{l-2})}^2$. By utilizing (\ref{global_variational_relaxed_cem}) and the test function $2\xi^2\psi_j^i$,  where $\xi_x=1-\chi_i^{l,l-2}(x)$ for $x\in\mathcal{N}$, we have
\begin{equation}
\label{rex_variational_orthogonal}
a(\psi_j^i,2\xi^2\psi_j^i)+s\big(\pi(\psi_j^i),\pi(2\xi^2\psi_j^i)\big)=s\big(\phi^i_j,\pi(2\xi^2\psi_j^i)\big)=0
\end{equation}
where the last equality follows from the facts that supp$(1-\chi_i^{l,l-2})\subset(\mathcal{N}\backslash\mathcal{N}_i^{l-2})$ and supp$(\phi^i_j)\subset\mathcal{N}_i$.

Note that
\begin{equation}
	\label{set less}
	\begin{split}
		\norm{\psi_j^i}_{a(\mathcal{N}\backslash\mathcal{N}_i^{l})}^2&\leq \sum_{\mathcal{N},\sim}L_{xy}\big(\psi_j^i(x)-\psi_j^i(y)\big)^2\cdot2\big(\xi_x^2+\xi_y^2-\xi_x\cdot\xi_y\big)+\sum_{x\in\mathcal{N}\backslash\mathcal{N}_i^{l-2}}M_x\cdot\psi_j^i(x)^2\cdot2\xi_x^2\\
  &=T_1+T_2,
	\end{split}
\end{equation}
where $T_1=\sum_{\mathcal{N},\sim}L_{xy}\big(\psi_j^i(x)-\psi_j^i(y)\big)^2\cdot2\big(\xi_x^2+\xi_y^2-\xi_x\cdot\xi_y\big)$ and $T_2=\sum_{x\in\mathcal{N}\backslash\mathcal{N}_i^{l-2}}M_x\cdot\psi_j^i(x)^2\cdot2\xi_x^2$.

For $T_1$, by combining the estimates (\ref{ka_1}), (2) of Assumption \ref{partition of unity} and (\ref{ka_2}), we have 
	\begin{align*}
		T_1&=\sum_{\mathcal{N},\sim}L_{xy}\big(\psi_j^i(x)-\psi_j^i(y)\big)\cdot2\big(\xi_x^2\psi_j^i(x)-\xi_y^2\psi_j^i(y)\big)\\
  &\quad+\sum_{\mathcal{N},\sim}L_{xy}\big(\psi_j^i(x)-\psi_j^i(y)\big)\cdot2\bigg(-\xi_x\psi_j^i(y)\cdot\big(\xi_x-\xi_y\big)-\xi_y\psi_j^i(x)\cdot\big(\xi_x-\xi_y\big)\bigg)\\
  &\leq \sum_{\mathcal{N},\sim}L_{xy}\big(\psi_j^i(x)-\psi_j^i(y)\big)\cdot2\big(\xi_x^2\psi_j^i(x)-\xi_y^2\psi_j^i(y)\big)\\
  &\quad+2\sum_{x\in\mathcal{N}_i^{l}\backslash\mathcal{N}_i^{l-2}}\frac{1}{2}\sum_{y\sim x}L_{xy}\abs{\psi_j^i(x)-\psi_j^i(y)}\cdot2\cdot\abs{-\xi_x\psi_j^i(y)\cdot(\xi_x-\xi_y)-\xi_y\psi_j^i(x)\cdot(\xi_x-\xi_y)}\\
  &=\sum_{\mathcal{N},\sim}L_{xy}\big(\psi_j^i(x)-\psi_j^i(y)\big)\cdot2\big(\xi_x^2\psi_j^i(x)-\xi_y^2\psi_j^i(y)\big)\\
  &\quad+2\sum_{x\in\mathcal{N}_i^{l}\backslash\mathcal{N}_i^{l-2}}\frac{1}{2}\sum_{y\sim x}L_{xy}\abs{\psi_j^i(x)-\psi_j^i(y)}\cdot2\cdot\abs{\xi_y-\xi_x}\cdot\abs{\xi_x\cdot(\psi_j^i(y)-\psi_j^i(x))+\psi_j^i(x)\cdot(\xi_x+\xi_y)}\\
  &\leq \sum_{\mathcal{N},\sim}L_{xy}\big(\psi_j^i(x)-\psi_j^i(y)\big)\cdot2\big(\xi_x^2\psi_j^i(x)-\xi_y^2\psi_j^i(y)\big)+2\sum_{x\in\mathcal{N}_i^{l}\backslash\mathcal{N}_i^{l-2}}\frac{1}{2}\sum_{y\sim x}4L_{xy}\big(\psi_j^i(x)-\psi_j^i(y)\big)^2\\
  &\quad+2\sum_{x\in\mathcal{N}_i^{l}\backslash\mathcal{N}_i^{l-2}}\frac{1}{2}\sum_{y\sim x}4L_{xy}F(1)C_{\eta}(C_{\mathrm{po}})^{-1}\cdot\abs{\psi_j^i(x)-\psi_j^i(y)}\cdot\abs{\psi_j^i(x)}.
	\end{align*}
By using Holder's inequality for the last term of the right hand side of above inequality, we have
	\begin{align*}
		T_1&\leq \sum_{\mathcal{N},\sim}L_{xy}\big(\psi_j^i(x)-\psi_j^i(y)\big)\cdot2\big(\xi_x^2\psi_j^i(x)-\xi_y^2\psi_j^i(y)\big)+8\norm{\psi_j^i}_{a(\mathcal{N}_i^{l}\backslash\mathcal{N}_i^{l-2})}^2\\
&\quad+8F(1)C_{\eta}\Big(\sum_{x\in\mathcal{N}_i^{l}\backslash\mathcal{N}_i^{l-2}}\frac{1}{2}\sum_{y\sim x}L_{xy}\big(\psi_j^i(x)-\psi_j^i(y)\big)^2\Big)^{\frac{1}{2}}\cdot\Big(\sum_{x\in\mathcal{N}_i^{l}\backslash\mathcal{N}_i^{l-2}}\frac{1}{2}\sum_{y\sim x}L_{xy}(C_{\mathrm{po}})^{-2}\cdot\big(\psi_j^i(x)\big)^2\Big)^{\frac{1}{2}}\\
  &\leq \sum_{\mathcal{N},\sim}L_{xy}\big(\psi_j^i(x)-\psi_j^i(y)\big)\cdot2\big(\xi_x^2\psi_j^i(x)-\xi_y^2\psi_j^i(y)\big)+8\norm{\psi_j^i}_{a(\mathcal{N}_i^{l}\backslash\mathcal{N}_i^{l-2})}^2\\
&\quad+8F(1)C_{\eta}\norm{\psi_j^i}_{a(\mathcal{N}_i^{l}\backslash\mathcal{N}_i^{l-2})}\norm{\psi_j^i}_{s(\mathcal{N}_i^{l}\backslash\mathcal{N}_i^{l-2})}.
	\end{align*}
In term of (\ref{psi_set}), we clearly obtain
\begin{equation*}
\norm{\psi_j^i}_{s(\mathcal{N}_i^{l}\backslash\mathcal{N}_i^{l-2})}^2=\sum_{\mathcal{N}_k\subset(\mathcal{N}_i^{l}\backslash\mathcal{N}_i^{l-2})}\norm{\psi_j^i}_{s(\mathcal{N}_k)}^2\leq\frac{1}{\Lambda}\norm{\psi_j^i}_{a(\mathcal{N}_i^{l}\backslash\mathcal{N}_i^{l-2})}^2+\norm{\pi(\psi_j^i)}_{s(\mathcal{N}_i^{l}\backslash\mathcal{N}_i^{l-2})}^2.
\end{equation*}
Then we obtain
\begin{equation}
\label{T1_estimate}
	\begin{split}
		T_1&\leq \sum_{\mathcal{N},\sim}L_{xy}\big(\psi_j^i(x)-\psi_j^i(y)\big)\cdot2\big(\xi_x^2\psi_j^i(x)-\xi_y^2\psi_j^i(y)\big)+8\norm{\psi_j^i}_{a(\mathcal{N}_i^{l}\backslash\mathcal{N}_i^{l-2})}^2\\
    &\quad+8F(1)C_{\eta}\norm{\psi_j^i}_{a(\mathcal{N}_i^{l}\backslash\mathcal{N}_i^{l-2})}\big(\frac{1}{\sqrt{\Lambda}}\norm{\psi_j^i}_{a(\mathcal{N}_i^{l}\backslash\mathcal{N}_i^{l-2})}+\norm{\pi(\psi_j^i)}_{s(\mathcal{N}_i^{l}\backslash\mathcal{N}_i^{l-2})}\big)\\
  &\leq \sum_{\mathcal{N},\sim}L_{xy}\big(\psi_j^i(x)-\psi_j^i(y)\big)\cdot2\big(\xi_x^2\psi_j^i(x)-\xi_y^2\psi_j^i(y)\big)+8\norm{\psi_j^i}_{a(\mathcal{N}_i^{l}\backslash\mathcal{N}_i^{l-2})}^2\\
  &\quad+8F(1)C_{\eta}\big(\frac{1}{\sqrt{\Lambda}}+\frac{1}{2}\big)\big(\norm{\psi_j^i}_{a(\mathcal{N}_i^{l}\backslash\mathcal{N}_i^{l-2})}^2+\norm{\pi(\psi_j^i)}_{s(\mathcal{N}_i^{l}\backslash\mathcal{N}_i^{l-2})}^2\big)\\
  &\leq \sum_{\mathcal{N},\sim}L_{xy}\big(\psi_j^i(x)-\psi_j^i(y)\big)\cdot2\big(\xi_x^2\psi_j^i(x)-\xi_y^2\psi_j^i(y)\big)\\
  &\quad+16(F(1)+1)(C_{\eta}+1)\big(\frac{1}{\sqrt{\Lambda}}+1\big)\big(\norm{\psi_j^i}_{a(\mathcal{N}_i^{l}\backslash\mathcal{N}_i^{l-2})}^2+\norm{\pi(\psi_j^i)}_{s(\mathcal{N}_i^{l}\backslash\mathcal{N}_i^{l-2})}^2\big).
	\end{split}
\end{equation}
Adding $T_2$ to both sides of (\ref{T1_estimate}) and using the fact that $\xi_x=0$ in $\mathcal{N}_i^{l-2}$, we get
\begin{equation}
\label{estimate_psi_a_norm}
	\begin{split}
		\norm{\psi_j^i}_{a(\mathcal{N}\backslash\mathcal{N}_i^{l})}^2&\leq \sum_{\mathcal{N},\sim}L_{xy}\big(\psi_j^i(x)-\psi_j^i(y)\big)\cdot2\big(\xi_x^2\psi_j^i(x)-\xi_y^2\psi_j^i(y)\big)+\sum_{x\in\mathcal{N}\backslash\mathcal{N}_i^{l-2}}M_x\cdot\psi_j^i(x)^2\cdot2\xi_x^2\\
  &\quad+16(F(1)+1)(C_{\eta}+1)\big(\frac{1}{\sqrt{\Lambda}}+1\big)\big(\norm{\psi_j^i}_{a(\mathcal{N}_i^{l}\backslash\mathcal{N}_i^{l-2})}^2+\norm{\pi(\psi_j^i)}_{s(\mathcal{N}_i^{l}\backslash\mathcal{N}_i^{l-2})}^2\big)\\
  &=a(\psi_{j}^{i},2\xi^2\psi_{j}^{i})+16(F(1)+1)(C_{\eta}+1)\big(\frac{1}{\sqrt{\Lambda}}+1\big)\big(\norm{\psi_j^i}_{a(\mathcal{N}_i^{l}\backslash\mathcal{N}_i^{l-2})}^2+\norm{\pi(\psi_j^i)}_{s(\mathcal{N}_i^{l}\backslash\mathcal{N}_i^{l-2})}^2\big).
	\end{split}
\end{equation}
Since $\xi_x=1$ for any $x\in\mathcal{N}\backslash\mathcal{N}_i^{l}$ and $\xi_x=0$ for any $x\in\mathcal{N}_i^{l-2}$, we have
\begin{align*}
s(\pi(\psi_j^i),2\xi^2\psi_j^i)&=s\big(\pi(\psi_j^i),\pi(2\xi^2\psi_j^i)\big)=2\norm{\pi(\psi_j^i)}_{s(\mathcal{N}\backslash\mathcal{N}_i^{l})}^2\\
&\quad+\sum_{x\in\mathcal{N}_i^{l}\backslash\mathcal{N}_i^{l-2}}\big(\frac{1}{2}\sum_{y\sim x}L_{xy}+M_x\big)(C_{\mathrm{po}})^{-2}2\xi_x^2\cdot\pi(\psi_j^i)(x)\cdot\psi_j^i(x).
\end{align*}
By above equality, the fact that $\abs{\xi}\leq1$ and holder's inequality, we obtain 
\begin{align*}
\norm{\pi(\psi_j^i)}_{s(\mathcal{N}\backslash\mathcal{N}_i^{l})}^2&<2\norm{\pi(\psi_j^i)}_{s(\mathcal{N}\backslash\mathcal{N}_i^{l})}^2=s\big(\pi(\psi_j^i),\pi(2\xi^2\psi_j^i)\big)\\
&\qquad-\sum_{x\in\mathcal{N}_i^{l}\backslash\mathcal{N}_i^{l-2}}\big(\frac{1}{2}\sum_{y\sim x}L_{xy}+M_x\big)(C_{\mathrm{po}})^{-2}2\xi_x^2\cdot\pi(\psi_j^i)(x)\cdot\psi_j^i(x)\\
&\leq s\big(\pi(\psi_j^i),\pi(2\xi^2\psi_j^i)\big)+2\norm{\pi(\psi_j^i)}_{s(\mathcal{N}_i^{l}\backslash\mathcal{N}_i^{l-2})}\cdot\norm{\psi_j^i}_{s(\mathcal{N}_i^{l}\backslash\mathcal{N}_i^{l-2})}.
\end{align*}
In terms of (\ref{psi_set}), we have 
$$\norm{\psi_j^i}_{s(\mathcal{N}_i^{l}\backslash\mathcal{N}_i^{l-2})}\leq\frac{1}{\sqrt{\Lambda}}\norm{\psi_j^i}_{a(\mathcal{N}_i^{l}\backslash\mathcal{N}_i^{l-2})}+\norm{\pi(\psi_j^i)}_{s(\mathcal{N}_i^{l}\backslash\mathcal{N}_i^{l-2})}.$$
Combining above two inequalities, we get
\begin{equation}
\label{estimate_pi_psi_s}
	\begin{aligned}
\norm{\pi(\psi_j^i)}_{s(\mathcal{N}\backslash\mathcal{N}_i^{l})}^2&\leq s\big(\pi(\psi_j^i),\pi(2\xi^2\psi_j^i)\big)+2\norm{\pi(\psi_j^i)}_{s(\mathcal{N}_i^{l}\backslash\mathcal{N}_i^{l-2})}\cdot\big(\frac{1}{\sqrt{\Lambda}}\norm{\psi_j^i}_{a(\mathcal{N}_i^{l}\backslash\mathcal{N}_i^{l-2})}+\norm{\pi(\psi_j^i)}_{s(\mathcal{N}_i^{l}\backslash\mathcal{N}_i^{l-2})}\big)\\
&\leq s\big(\pi(\psi_j^i),\pi(2\xi^2\psi_j^i)\big)+2\big(\frac{1}{\sqrt{\Lambda}}+1\big)\big(\norm{\psi_j^i}_{a(\mathcal{N}_i^{l}\backslash\mathcal{N}_i^{l-2})}^2+\norm{\pi(\psi_j^i)}_{s(\mathcal{N}_i^{l}\backslash\mathcal{N}_i^{l-2})}^2\big).
\end{aligned}
\end{equation}
By (\ref{rex_variational_orthogonal}), (\ref{estimate_psi_a_norm}) and (\ref{estimate_pi_psi_s}), we obtain that
\begin{equation}
\label{estimate_in_step_3}
\begin{aligned}
\norm{\psi_j^i}_{a(\mathcal{N}\backslash\mathcal{N}_i^{l})}^2+\norm{\pi(\psi_j^i)}_{s(\mathcal{N}\backslash\mathcal{N}_i^{l})}^2&\leq18(F(1)+1)(C_{\eta}+1)\big(\frac{1}{\sqrt{\Lambda}}+1\big)\big(\norm{\psi_j^i}_{a(\mathcal{N}_i^{l}\backslash\mathcal{N}_i^{l-2})}^2+\norm{\pi(\psi_j^i)}_{s(\mathcal{N}_i^{l}\backslash\mathcal{N}_i^{l-2})}^2\big).
\end{aligned}
\end{equation}

\paragraph{\textbf{Step 4:}} We will show that $\norm{\psi_j^i}_{a(\mathcal{N}\backslash\mathcal{N}_i^{l})}^2+\norm{\pi(\psi_j^i)}_{s(\mathcal{N}\backslash\mathcal{N}_i^{l})}^2$ can be estimated by $\norm{\psi_j^i}_{a(\mathcal{N}\backslash\mathcal{N}_i^{l-2})}^2+\norm{\pi(\psi_j^i)}_{s(\mathcal{N}\backslash\mathcal{N}_i^{l-2})}^2$. 
\begin{align*}
&\quad\norm{\psi_j^i}_{a(\mathcal{N}\backslash\mathcal{N}_i^{l-2})}^2+\norm{\pi(\psi_j^i)}_{s(\mathcal{N}\backslash\mathcal{N}_i^{l-2})}^2\\
&=\norm{\psi_j^i}_{a(\mathcal{N}\backslash\mathcal{N}_i^{l})}^2+\norm{\pi(\psi_j^i)}_{s(\mathcal{N}\backslash\mathcal{N}_i^{l})}^2+\norm{\psi_j^i}_{a(\mathcal{N}_i^{l}\backslash\mathcal{N}_i^{l-2})}^2+\norm{\pi(\psi_j^i)}_{s(\mathcal{N}_i^{l}\backslash\mathcal{N}_i^{l-2})}^2\\
&\geq\Big(1+\big(18(F(1)+1)(C_{\eta}+1)\big(\frac{1}{\sqrt{\Lambda}}+1\big)\big)^{-1}\Big)\big(\norm{\psi_j^i}_{a(\mathcal{N}\backslash\mathcal{N}_i^{l})}^2+\norm{\pi(\psi_j^i)}_{s(\mathcal{N}\backslash\mathcal{N}_i^{l})}^2\big),
\end{align*}
where (\ref{estimate_in_step_3}) is used in the last inequality. Utilizing the above inequality recursively and if $l\geq2$ is an even number, we have
$$\norm{\psi_j^i}_{a(\mathcal{N}\backslash\mathcal{N}_i^{l})}^2+\norm{\pi(\psi_j^i)}_{s(\mathcal{N}\backslash\mathcal{N}_i^{l})}^2\leq\Big(1+\big(18(F(1)+1)(C_{\eta}+1)\big(\frac{1}{\sqrt{\Lambda}}+1\big)\big)^{-1}\Big)^{-\frac{l}{2}}(\norm{\psi_j^i}_a^2+\norm{\pi(\psi_j^i)}_s^2).$$
If $l>2$ is an odd number, we have
$$\norm{\psi_j^i}_{a(\mathcal{N}\backslash\mathcal{N}_i^{l})}^2+\norm{\pi(\psi_j^i)}_{s(\mathcal{N}\backslash\mathcal{N}_i^{l})}^2\leq\Big(1+\big(18(F(1)+1)(C_{\eta}+1)\big(\frac{1}{\sqrt{\Lambda}}+1\big)\big)^{-1}\Big)^{\frac{1-l}{2}}(\norm{\psi_j^i}_a^2+\norm{\pi(\psi_j^i)}_s^2).$$
The proof is completed.
\end{proof}

\begin{lemma}
\label{global and multiscale}
With the same notations in Lemma \ref{basis decay property}, we can get
$$\norm{\sum_{i=1}^N(\psi_j^i-\psi_{j,l}^i)}_a^2\leq C_\mathup{ol}F(l+2)\sum_{i=1}^N\norm{\psi_j^i-\psi_{j,l}^i}_a^2,$$
where $C_\mathup{ol}>0$ does not depend on $C_{\mathrm{po}},C_{\eta},l,\Lambda$. 
\end{lemma}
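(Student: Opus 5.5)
Write $e^i\coloneqq\psi_j^i-\psi_{j,l}^i$ and $e\coloneqq\sum_{i=1}^N e^i$. The plan mirrors the standard CEM-GMsFEM overlap argument: split $\norm{e}_a^2$ into contributions from the coarse subgraphs $\mathcal{N}_k$, then use that on each $\mathcal{N}_k$ only boundedly many $e^i$ are ``active''. Since $\norm{v}_a^2=\sum_{k}\norm{v}_{a(\mathcal{N}_k)}^2$ (the $\mathcal{N}_k$ are disjoint and cover $\mathcal{N}$), we have
\begin{equation*}
\norm{e}_a^2=\sum_{k=1}^N\Big\|\sum_{i=1}^N e^i\Big\|_{a(\mathcal{N}_k)}^2 .
\end{equation*}
The localized seminorm $\norm{\cdot}_{a(\mathcal{N}_k)}$ involves the values of $v$ on $\mathcal{N}_k$ and on nodes adjacent to $\mathcal{N}_k$, which all lie in $\mathcal{N}_k^1$. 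So $e^i$ contributes to this term only if $\operatorname{supp}(e^i)\cap\mathcal{N}_k^1\neq\varnothing$.

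The support argument is the key step, and I expect it to be the main obstacle. The global function $\psi_j^i$ is not locally supported, so $e^i$ is not supported in $\mathcal{N}_i^l$. I would therefore restrict the sum by a different device. The decay estimate of Lemma \ref{basis decay property} already controls $e^i$ globally, and the lemma only needs an overlap constant. I would decompose $e^i=\chi_i^{l+2,l}e^i+(1-\chi_i^{l+2,l})e^i$. Alternatively, and as the cleanest route, I would use the Galerkin-type characterization from the proof of Lemma \ref{basis decay property}, namely $a(e^i,v)+s(\pi e^i,\pi v)=0$ for $v\in V(\mathcal{N}_i^{l})$.

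With that identity, the plan is to test $a(e,e)=\sum_i a(e^i,e)$ and write $e=e\chi_i+e(1-\chi_i)$ with $\chi_i=\chi_i^{l+2,l}$. The term with $e(1-\chi_i)$ and the term with $e\chi_i$ are treated differently. For $e\chi_i$, which is supported in $\mathcal{N}_i^{l+2}$, Cauchy--Schwarz gives $\abs{a(e^i,\chi_i e)}\le\norm{e^i}_a\norm{\chi_i e}_{a(\mathcal{N}_i^{l+3})}$. The cutoff product rule estimate of Step 1 bounds $\norm{\chi_i e}_a$ by $\norm{e}_{a(\mathcal{N}_i^{l+3})}$ plus an $s$-norm term. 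That $s$-norm term costs factors of $C_\eta$ and $\Lambda$, which is undesirable given the claimed independence. For this reason I would prefer a purely combinatorial route, so that $C_{\mathrm{ol}}$ is independent of $C_{\mathrm{po}},C_\eta,\Lambda,l$.

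The combinatorial route is as follows. If a support-counting fact can be established, namely that every node $x$ lies in the relevant region of at most $F(l+2)$ indices $i$ (by Assumption \ref{assumption on partitions}(1), $\mathcal{N}_k\subset\mathcal{N}_i^{l+1}$ iff $\mathcal{N}_i\subset\mathcal{N}_k^{l+1}$, giving at most $F(l+1)\le F(l+2)$ such $i$), then the discrete Cauchy--Schwarz inequality gives
\begin{equation*}
\Big(\sum_{i} e^i(x)-e^i(y)\Big)^2\le F(l+2)\sum_i\big(e^i(x)-e^i(y)\big)^2 ,
\end{equation*}
and similarly for the mass term. Summing over edges and nodes then yields the claim with $C_{\mathrm{ol}}$ an absolute constant (possibly absorbing a factor $2$ from the edge double-counting and the degree bound $\tilde C$). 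For the global tail I would absorb the contribution via the decay already proved, or argue that the restriction is harmless.
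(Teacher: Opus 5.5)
There is a genuine gap, at the point you flagged yourself. The support-counting fact is false for $e^i=\psi_j^i-\psi_{j,l}^i$. Outside $\mathcal{N}_i^l$ one has $e^i=\psi_j^i$, and the global basis function only decays (this is what Lemma~\ref{basis decay property} quantifies); it is not locally supported. So on a given edge, generically all $N$ functions $e^i$ are nonzero, and the pointwise Cauchy--Schwarz step gives a factor $N$, not $F(l+2)$.

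The tail cannot be absorbed by decay either. After splitting $e^i=\chi_ie^i+(1-\chi_i)e^i$, the localized parts have bounded overlap. But without further structure, $\norm{\sum_i(1-\chi_i)e^i}_a^2$ is only bounded by $N\sum_i\norm{(1-\chi_i)e^i}_a^2$. The decay lemma makes each tail small, but each $\norm{e^i}_a$ is small at the same rate, so the ratio, and hence the factor $N$, is unchanged. What is needed is cancellation between different $e^i$, i.e.\ an orthogonality argument.

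Your sketch of that route also has the roles reversed. The identity $a(e^i,v)+s(\pi e^i,\pi v)=0$ for $v\in V(\mathcal{N}_i^l)$ does not apply to $\chi_i^{l+2,l}e$, which is supported in $\mathcal{N}_i^{l+2}$. Moreover, it is the term $(1-\chi_i)e$, which you left unspecified, that has to vanish.

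The missing observation is that $e^i$ is also orthogonal to test functions \emph{outside} the oversampling region. For every $v$ vanishing on $\mathcal{N}_i^{l+1}$ one has $a(e^i,v)+s(\pi e^i,\pi v)=0$. Indeed, $s(\phi_j^i,\pi v)=0$ in the global problem, because $\phi_j^i$ lives on $\mathcal{N}_i$ and $\pi$ acts subgraph-wise. Also $a(\psi_{j,l}^i,v)=0=s(\pi\psi_{j,l}^i,\pi v)$, because every edge touching the support of $\psi_{j,l}^i$ has both endpoints in $\mathcal{N}_i^{l+1}$.

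So you must work with $\norm{e}_a^2+\norm{\pi e}_s^2=\sum_i\big(a(e^i,e)+s(\pi e^i,\pi e)\big)$ instead of $a(e,e)$ alone. Take a cutoff $\chi_i$ equal to $1$ on $\mathcal{N}_i^{l+1}$ and supported a few layers further out. Then the $(1-\chi_i)e$ part drops out, and only $a(e^i,\chi_ie)+s(\pi e^i,\pi(\chi_ie))$ remains. Now combine Cauchy--Schwarz, the Step~1 bound for $\chi_ie$ together with $\norm{e}_{s(\mathcal{N}_k)}^2\le\Lambda^{-1}\norm{e}_{a_k}^2+\norm{\pi e}_{s(\mathcal{N}_k)}^2$, and your (correct) symmetric patch counting. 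This gives
\[
\norm{e}_a^2+\norm{\pi e}_s^2\le C\,F(l+c)\sum_{i=1}^N\big(\norm{e^i}_a^2+\norm{\pi e^i}_s^2\big),
\]
with a small fixed offset $c$ and $C$ depending on $F(1)$, $C_\eta$ and $\Lambda$ through $(1+F(1)^2C_\eta^2)(1+\Lambda^{-1})$.

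This is the CEM overlap argument the paper defers to, and it is the route you abandoned. Your concern is legitimate: this argument does not deliver a $C_{\mathrm{ol}}$ independent of $C_\eta$ and $\Lambda$, nor a right-hand side with $\norm{e^i}_a^2$ alone. The extra $\norm{\pi e^i}_s^2$ is harmless, since (\ref{decay_2}) controls exactly $\norm{e^i}_a^2+\norm{\pi e^i}_s^2$ in the proof of Theorem~\ref{convergence analysis}. But avoiding that dependence by assuming a support property that does not hold is not a valid way around it.
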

\begin{proof}
The proof is similar to \cite[Lemma 4]{Eric2018} so we omit the details here.
\end{proof}

Next, we state and prove the convergence of the multiscale solution $u_\mathup{ms}$ to $u$.

\begin{theorem}
\label{convergence analysis}
Let $u$ be the solution of (\ref{model problem}) and $u_\mathup{ms}$ be the multiscale solution of (\ref{multiscale problem}) . Then we have
$$\norm{u-u_\mathup{ms}}_a\leq\frac{1}{\sqrt{\Lambda}}\norm{f}_{s^*}+C_\mathup{ol}^{\frac{1}{2}}F(l+2)^{\frac{1}{2}}(c_{\star}\theta^{l'})^{\frac{1}{2}}(\norm{u_\mathup{glo}}_a+\norm{\pi(u_\mathup{glo})}_s).$$
where $u_\mathup{glo}\in V^\mathup{glo}_\mathup{ms}$ is the multiscale solution using global basis and the constant $C_\mathup{ol}>0$ does not depend on $C_{\mathrm{po}},C_{\eta},l,\Lambda$. Besides, $c_{\star},\theta,l'$ are defined the same as in Lemma \ref{basis decay property}.

\end{theorem}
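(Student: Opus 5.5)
The argument is the standard CEM-GMsFEM splitting. Since $u_\mathup{ms}$ is the Galerkin projection of $u$ onto $V_\mathup{ms}^l$ in the $a$-inner product, it is the best approximation in $\norm{\cdot}_a$:
\[
\norm{u-u_\mathup{ms}}_a \le \norm{u-w}_a \qquad \text{for every } w\in V_\mathup{ms}^l .
\]
The plan is to choose $w$ as the localized counterpart of $u_\mathup{glo}$ and then split with the triangle inequality.

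\textbf{Step 1: choice of $w$.} Write $u_\mathup{glo}=\sum_{i=1}^N\sum_{j=1}^{l_i} c_{ij}\psi_j^i$, which is possible because $u_\mathup{glo}\in V^\mathup{glo}_\mathup{ms}$. Set $w=\sum_{i,j}c_{ij}\psi_{j,l}^i\in V_\mathup{ms}^l$. Then
\[
\norm{u-u_\mathup{ms}}_a\le \norm{u-u_\mathup{glo}}_a+\Big\|\sum_{i,j}c_{ij}(\psi_j^i-\psi_{j,l}^i)\Big\|_a .
\]
Lemma~\ref{global convergence} bounds the first term by $\Lambda^{-1/2}\norm{f}_{s^*}$.

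\textbf{Step 2: overlap bound.} For the second term I apply Lemma~\ref{global and multiscale}, applied to the weighted combination (its proof, as in \cite[Lemma 4]{Eric2018}, is insensitive to the coefficients). This gives
\[
\Big\|\sum_{i,j}c_{ij}(\psi_j^i-\psi_{j,l}^i)\Big\|_a^2\le C_\mathup{ol}F(l+2)\sum_i\Big\|\sum_j c_{ij}(\psi_j^i-\psi_{j,l}^i)\Big\|_a^2 .
\]

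\textbf{Step 3: decay per subgraph.} Apply Lemma~\ref{basis decay property} with $\phi_j^i$ replaced by $\sum_j c_{ij}\phi_j^i$. Its proof uses only the variational characterisations (\ref{variational_relaxed_cem}) and (\ref{global_variational_relaxed_cem}) and the fact that the right-hand side data are supported in $\mathcal{N}_i$, so it extends verbatim by linearity. Writing $\psi^i=\sum_j c_{ij}\psi_j^i$, this yields
\[
\Big\|\sum_j c_{ij}(\psi_j^i-\psi_{j,l}^i)\Big\|_a^2\le c_\star\theta^{l'}\big(\norm{\psi^i}_a^2+\norm{\pi\psi^i}_s^2\big).
\]

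\textbf{Step 4: summing over $i$ (main obstacle).} We must show
\[
\sum_i\big(\norm{\psi^i}_a^2+\norm{\pi\psi^i}_s^2\big)\lesssim \norm{u_\mathup{glo}}_a^2+\norm{\pi u_\mathup{glo}}_s^2 .
\]
The plan is to use the variational form (\ref{global_variational_relaxed_cem}) for $\psi^i$, which says that $\psi^i$ is the $a(\cdot,\cdot)+s(\pi\cdot,\pi\cdot)$-Riesz representer of $v\mapsto s(\phi^i,\pi v)$, where $\phi^i=\sum_j c_{ij}\phi_j^i$. This gives
\[
\norm{\psi^i}_a^2+\norm{\pi\psi^i}_s^2=s(\phi^i,\pi\psi^i).
\]
Because the $\psi^i$ are mutually orthogonal in this inner product once we also use $\pi$-orthogonality of the data, we get
\[
\sum_i\big(\norm{\psi^i}_a^2+\norm{\pi\psi^i}_s^2\big)=\norm{u_\mathup{glo}}_a^2+\norm{\pi u_\mathup{glo}}_s^2 .
\]
If exact orthogonality fails, I fall back to identifying $u_\mathup{glo}$ as the representer of the datum $\sum_i\phi^i$. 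The $s$-orthogonality of the local auxiliary spaces then gives the sum identity, up to harmless constants that can be absorbed into $C_\mathup{ol}$.

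\textbf{Conclusion.} Combining Steps 1--4 gives
\[
\norm{u-u_\mathup{ms}}_a\le \frac{1}{\sqrt{\Lambda}}\norm{f}_{s^*}+\big(C_\mathup{ol}F(l+2)c_\star\theta^{l'}\big)^{1/2}\big(\norm{u_\mathup{glo}}_a^2+\norm{\pi u_\mathup{glo}}_s^2\big)^{1/2}.
\]
The elementary bound $\sqrt{a^2+b^2}\le a+b$ then yields the stated estimate.
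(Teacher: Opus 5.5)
Your Steps 1--3 match the paper's argument: best approximation in $V_{\mathrm{ms}}^l$, then Lemma~\ref{global and multiscale} and Lemma~\ref{basis decay property} applied by linearity to $\psi^i=\sum_j c_{ij}\psi_j^i$ with datum $\phi^i=\sum_j c_{ij}\phi_j^i$. Step 4, however, has a genuine gap.

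Write $b(u,v):=a(u,v)+s(\pi u,\pi v)$. Your main claim is that the $\psi^i$ are $b$-orthogonal, so that $\sum_i b(\psi^i,\psi^i)=b(u_{\mathrm{glo}},u_{\mathrm{glo}})$. This is false for the relaxed formulation. By (\ref{global_variational_relaxed_cem}), $b(\psi^i,\psi^k)=s(\phi^i,\pi\psi^k)$. This vanishes for all $\phi^i\in V_{\mathrm{aux}}^i$, $i\neq k$, only if $\pi\psi^k\in V_{\mathrm{aux}}^k$. The hard constraint of the non-relaxed CEM enforces that, but the penalty in (\ref{global relaxed cem}) does not: $\psi^k$ spreads beyond $\mathcal{N}_k$, and its $s$-projection leaks into neighbouring auxiliary spaces. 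For example, take two adjacent one-node subgraphs with $M_x>0$. Then $\pi=I$, and $b(\psi^1,\psi^2)$ is a positive multiple of the off-diagonal entry of the inverse of the $2\times 2$ matrix of $b$, which is nonzero.

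Your fallback does not close the gap either. The $s$-orthogonality of the local auxiliary spaces only gives $\|\phi\|_s^2=\sum_i\|\phi^i\|_s^2=\sum_{i,j}c_{ij}^2$ for $\phi=\sum_i\phi^i$. There is no sum identity to recover. What you need is an inequality, and you never supply its nontrivial direction.

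The missing idea, which is how the paper proceeds, is to pass through $\|\phi\|_s$ in both directions.

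\emph{Upper bound.} Testing the global problem for $\psi^i$ with $v=\psi^i$ gives $b(\psi^i,\psi^i)=s(\phi^i,\pi\psi^i)\le\|\phi^i\|_s^2$. The paper records this as $\|\pi\psi^i\|_s^2\le\|\phi^i\|_s^2$ and $\|\psi^i\|_a^2\le\frac14\|\phi^i\|_s^2$. Hence $\sum_i b(\psi^i,\psi^i)\lesssim\|\phi\|_s^2$.

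\emph{Lower bound.} $u_{\mathrm{glo}}$ is the $b$-representer of $v\mapsto s(\phi,\pi v)$, and $\phi\in V$ is an admissible test function. Testing with $v=\phi$ gives
\[
\|\phi\|_s^2=a(u_{\mathrm{glo}},\phi)+s(\pi u_{\mathrm{glo}},\phi)\le\|u_{\mathrm{glo}}\|_a\|\phi\|_a+\|\pi u_{\mathrm{glo}}\|_s\|\phi\|_s .
\]
One then closes with an inverse estimate $\|\phi\|_a\le K\|\phi\|_s$ on $V_{\mathrm{aux}}$, which yields $\|\phi\|_s\le\max(K,1)\bigl(\|u_{\mathrm{glo}}\|_a+\|\pi u_{\mathrm{glo}}\|_s\bigr)$.

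This inverse estimate is exactly where your ``harmless constant'' sits, and it is not automatically harmless. The elementary bound is
\[
\|\phi\|_a^2\le 4\sum_x(\tilde L_x+M_x)\phi(x)^2\le 4C_{\mathrm{po}}^2\|\phi\|_s^2 ,
\]
so in general one only gets $K=2C_{\mathrm{po}}$. The paper's version of this display writes $4\|\phi\|_s^2$, which drops the weight $C_{\mathrm{po}}^{-2}$ built into $\|\cdot\|_s$. Absorbing $K$ into a $C_{\mathrm{ol}}$ that is claimed to be independent of $C_{\mathrm{po}}$ therefore needs either a sharper lifting argument or an explicit extra $C_{\mathrm{po}}$-dependent factor in the second term.
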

\begin{proof}
By the variational form (\ref{global_variational_relaxed_cem}), we have 
\begin{equation*}
a(\psi_{j}^{i},\psi_{j}^{i})+s\big(\pi(\psi_{j}^{i}),\pi(\psi_{j}^{i})\big)=s\big(\phi^i_j,\pi(\psi_{j}^{i})\big)
\end{equation*}
By the above equality, on one hand, we have
$\norm{\pi(\psi_{j}^i)}_s^2\leq\norm{\phi^i_j}_s\cdot\norm{\pi(\psi_{j}^i)}_s$. That is,
\begin{equation}
\label{con_11}
\norm{\pi(\psi_{j}^i)}_s^2\leq\norm{\phi^i_j}_s^2.
\end{equation}
On the other hand, 
\begin{align*}
s\big(\phi^i_j,\pi(\psi_{j}^{i})\big)\leq\frac{1}{\sqrt{2}}\norm{\phi^i_j}_s\cdot\sqrt{2}\norm{\pi(\psi_{j}^i)}_s\leq\big(\frac{1}{2}\norm{\phi^i_j}_s^2+2\norm{\pi(\psi_{j}^i)}_s^2\big)/2=\frac{1}{4}\norm{\phi^i_j}_s^2+s\big(\pi(\psi_{j}^{i}),\pi(\psi_{j}^{i})\big).
\end{align*}
Then, we have
\begin{equation}
\label{con_22}
\norm{\psi_{j}^i}_a^2\leq\frac{1}{4}\norm{\phi^i_j}_s^2.
\end{equation}
By combining (\ref{con_11}), (\ref{con_22}) and Lemma \ref{basis decay property}, we obtain
\begin{equation}
\label{decay_2}
\norm{\psi_j^i-\psi_{j,l}^i}_a^2+\norm{\pi(\psi_j^i-\psi_{j,l}^i)}_s^2\leq \frac{5}{4}c_{\star}\theta^{l'}\norm{\phi_j^i}_s^2.
\end{equation}
We write $u_\mathup{glo}=\sum_{i=1}^N\sum_{j=1}^{l_i}c^i_j\psi_j^i$ and define $w=\sum_{i=1}^N\sum_{j=1}^{l_i}c^i_j\psi_{j,l}^i\in V_\mathup{ms}^m$. By the orthogonal property, we have (Note that $w-u_\mathup{ms}\in V_\mathup{ms}^m$)
\begin{align*}
\norm{u-u_\mathup{ms}}_a^2&=a(u-u_\mathup{ms},u-u_\mathup{ms})=a(u-u_\mathup{ms},u-w+w-u_\mathup{ms})\\
&=a(u-u_\mathup{ms},u-w)\leq \norm{u-u_\mathup{ms}}_a\cdot\norm{u-w}_a.
\end{align*}
Then we have
$$\norm{u-u_\mathup{ms}}_a\leq\norm{u-w}_a\leq\norm{u-u_\mathup{glo}}_a+\norm{\sum_{i=1}^N\sum_{j=1}^{l_i}c^i_j(\psi_j^i-\psi_{j,l}^i)}_a.$$
By utilizing Lemmas \ref{basis decay property},  \ref{global and multiscale} and (\ref{decay_2})  $\big($applying them to the function $\sum_{j=1}^{l_i}c^i_j(\psi_j^i-\psi_{j,l}^i)$$\big)$ and denoting $\phi=\sum_{i=1}^N\sum_{j=1}^{l_i}c^i_j\phi^i_j\in V_\mathup{aux}$, we obtain
\begin{align*}
\norm{w-u_\mathup{glo}}_a^2&\leq C_\mathup{ol}F(l+2)\sum_{i=1}^N\norm{\sum_{j=1}^{l_i}c^i_j(\psi_j^i-\psi_{j,l}^i)}_a^2\\
&\leq C_\mathup{ol}F(l+2)\cdot\frac{5}{4}c_{\star}\theta^{l'}\sum_{i=1}^N\sum_{j=1}^{l_i}\norm{c^i_j\phi_j^{i}}_s^2\\
&=C_\mathup{ol}F(l+2)\cdot\frac{5}{4}c_{\star}\theta^{l'}\sum_{i=1}^N\sum_{j=1}^{l_i}(c^i_j)^2=CF(l+2)\cdot\frac{5}{4}c_{\star}\theta^{l'}\cdot s(\phi,\phi),
\end{align*}
where the orthogonality of the eigenfunctions $\phi^i_j$ of (\ref{spectral problem}) is used in the last two lines. 
By the definitions of $u_\mathup{glo},\phi$ and the variational form (\ref{global_variational_relaxed_cem}), we know 
\begin{equation}
\label{con_33}
a(u_\mathup{glo},v)+s(\pi(u_\mathup{glo}),\pi(v))=s(\phi,\pi(v)),\quad \forall v\in V.
\end{equation}
For $\phi\in V_\mathup{aux}$, we clearly have
\begin{equation}
\label{a_equiv_s}
\begin{split}
\norm{\phi}_a^2&=\sum_{\mathcal{N},\sim}L_{xy}\big(\phi_x-\phi_y\big)^2+\sum_{x\in\mathcal{N}}M_x\leq2\sum_{\mathcal{N},\sim}L_{xy}\big(\phi_x^2+\phi_y^2\big)+\sum_{x\in\mathcal{N}}M_x\phi_x^2\\
&\leq4\Big(\sum_{\mathcal{N},\sim}L_{xy}\big(\phi_x\big)^2+\sum_{x\in\mathcal{N}}M_x\phi_x^2\Big)=4\norm{\phi}_s^2.
\end{split}
\end{equation}
Letting $v=\phi$ in (\ref{con_33}), we have $$a(u_\mathup{glo},\phi)+s(\pi(u_\mathup{glo}),\pi(\phi))=s(\phi,\pi(\phi))=s(\phi,\phi).$$
Then, combining (\ref{a_equiv_s}), we obtain
\begin{align*}
s(\phi,\phi)&=a(u_\mathup{glo},\phi)+s(\pi(u_\mathup{glo}),\phi)\leq\norm{u_\mathup{glo}}_a\norm{\phi}_a+\norm{\pi(u_\mathup{glo})}_s\norm{\phi}_s\\
&\leq2\norm{\phi}_s(\norm{u_\mathup{glo}}_a+\norm{\pi(u_\mathup{glo})}_s).
\end{align*}
Therefore, we have
$$\norm{w-u_\mathup{glo}}_a\leq C_\mathup{ol}^{\frac{1}{2}}F(l+2)^{1/2}(c_{\star}\theta^{l'})^{1/2}(\norm{u_\mathup{glo}}_a+\norm{\pi(u_\mathup{glo})}_s),$$
where $C_\mathup{ol}>0$ does not depend on $C_{\mathrm{po}},C_{\eta},l,\Lambda$. 
Combining Theorem \ref{global convergence}, we obtain the desired estimate.
\end{proof}
\begin{remark}
We can give more details from this theorem. It is easy to find that $\norm{f}_{s^*}=O(C_{\mathrm{po}})$. Recall (\ref{u_glo}), it follows that $\norm{u_\mathup{glo}}_a=O(1)$. We also have $\norm{\pi(u_\mathup{glo})}_s=\norm{\pi(u)}_s=O\big((C_{\mathrm{po}})^{-1}\big)$. Therefore, by choosing $l$ such that $(l+1)^{d/2}(c_{\star}\theta^{l'})^{1/2}=O\big((C_{\mathrm{po}})^2\big)$, we have
$$\norm{u-u_\mathup{ms}}_a=O(C_{\mathrm{po}}).$$
\end{remark}
\subsection{On the number of small contrast-dependent eigenvectors}\label{number of small eigenvalues}
In this section, we show that the number of small eigenvalues is determined by the number of disconnected high-contrast inclusions or channels, and that the smallest contrast-independent eigenvalue is of order one. To simplify the presentation, we temporarily omit the factor $(C_{\mathrm{po}})^{-2}$ on the right-hand side of the eigenvalue problem.

Denote $\mathcal{R}_1$ as the region not containing high-contrast structures. We set $L_{xy}=1$ for all edges with $x\in\mathcal{R}_1$ (note that $y$ may lie outside $\mathcal{R}_1$). The high-contrast structure is characterized by a parameter $\xi \gg1$. We decompose the node set as $\mathcal{N}=\mathcal{R}_1\cup\mathcal{R}^{\xi}$, where $\mathcal{R}^{\xi}=\cup_{m=1}^{N_\mathup{ch}}\mathcal{R}^{\xi}_m$ consists of $N_\mathup{ch}$ mutually disjoint ($\mathcal{R}^{\xi}_i\cap\mathcal{R}^{\xi}_j=\varnothing$ for $i\neq j$) subgraphs, each representing a high-contrast inclusion or channel. We denote the boundary of a subgraph $\mathcal{R}$ as
$$\partial\mathcal{R}\coloneqq\{x\in\mathcal{R}\colon\exists y\in \mathcal{N}\backslash\mathcal{R},\hspace{0.2em} \text{such that} \hspace{0.2em} x\sim y\}.$$
We assume
\begin{equation}
	\label{small contrast dependent}
	C_L^{\mathup{inf}}\xi\leq L_{xy} \leq C_L^{\mathup{sup}}\xi
\end{equation}
for all edges with $x,y\in\mathcal{R}^{\xi}$ and $y\sim x$, where $0<C_L^{\mathup{inf}}\leq C_L^{\mathup{sup}}$ are constants independent of $\xi$. We further assume that the sizes of the inclusions are independent of $\xi$ and that the diameters of $\mathcal{R}^{\xi}_m$ and $\mathcal{N}$ are of $O(1)$. If $\mathcal{N}$ embedded in $\mathbb{R}^2$ has a diameter of order $C_{\mathrm{po}}$, then the smallest contrast-independent eigenvalue scales as $O((C_{\mathrm{po}})^{-2})$. 

Consider the discrete generalized eigenvalue problem
$$A\psi_m=\lambda S\psi_m,$$
where
$$(Av,w)=\sum_{\mathcal{\mathcal{N}},\sim}L_{xy}(v(x)-v(y))(w(x)-w(y))+\sum_{x\in\mathcal{N}}M_xv(x) w(x),$$
$$(Sv,w)=\sum_{x\in\mathcal{N}}(\tilde{L}_{x}+M_x)v(x) w(x).$$
Here we also assume $0\leq M_x \leq\overline{M}<\infty$ as before and note that $\overline{M}$ is a constant independent of $\xi$.

In the following, we prove that $\lambda_m=O\big(\frac{1}{\xi}\big)$ for $m=1,2,...,N_{\mathrm{ch}}$ and $\lambda_m\geq C$ with $C$ independent of $\xi$ for $m\geq N_{\mathrm{ch}}+1$. 
First, we prove that there is at least $N_{\mathrm{ch}}$ small eigenvalues. Let $W\subset V$ be an arbitrary $N_{\mathrm{ch}}$-dimensional subspace. Note that we have
$$
\lambda_{N_{\mathrm{ch}}}=\min_{\dim(W)=N_{\mathrm{ch}}}\max_{v\in W \setminus \{0\}}R(v),\quad \text{where } R(v)=\frac{v^TAv}{v^TSv}.
$$
We need to find $W\subset V$ where the quotient $R(\cdot)$ is of order $1/\xi$. 
Define $V_\mathup{con}$ as the space spanned by vectors in $V$ that are constant inside each $\mathcal{R}^{\xi}_m$, $m=1,2,...,N_\mathup{ch}$. These functions are extended by zero outside $\mathcal{R}^{\xi}_m$. 
Let $v\in V_\mathup{con}$ and assume that $v=v_m$ in $\mathcal{R}^{\xi}_m$. Then
\begin{align*}
v^T A v&=\sum_{\mathcal{\mathcal{N}},\sim}L_{xy}(v(x)-v(y))^2+\sum_{x\in\mathcal{N}}M_xv(x)^2\\
&=\sum_{m=1}^{N_\mathup{ch}}\sum_{\substack{x\in\mathcal{R}^{\xi}_m \\ y\sim x}}\frac{1}{2}L_{xy}(v(x)-v(y))^2+\sum_{\substack{x\in\mathcal{R}_1\\ y\sim x}}\frac{1}{2}L_{xy}(v(x)-v(y))^2+\sum_{x\in\mathcal{N}}M_xv(x)^2\\
&=\sum_{m=1}^{N_\mathup{ch}}\sum_{\substack{x\in\partial\mathcal{R}^{\xi}_m \\ y\sim x,y\in\partial\mathcal{R}_1}}\frac{1}{2}(v(x)-v(y))^2+\sum_{m=1}^{N_\mathup{ch}}\sum_{\substack{x\in\partial\mathcal{R}_1\\ y\sim x,y\in\partial\mathcal{R}^\xi_m}}\frac{1}{2}(v(x)-v(y))^2+\sum_{m=1}^{N_\mathup{ch}}\sum_{x\in\mathcal{R}^\xi_m}M_xv_m^2\\
& =\sum_{m=1}^{N_\mathup{ch}}\sum_{\substack{x\in\partial\mathcal{R}^{\xi}_m \\ y\sim x,y\in\partial\mathcal{R}_1}} \frac{1}{2}(v(x)-v(y))^2+\sum_{m=1}^{N_\mathup{ch}}\sum_{x\in\mathcal{R}^\xi_m}M_xv_m^2\\
&\leq \tilde{C}\sum_{m=1}^{N_\mathup{ch}}\abs{\partial\mathcal{R}^{\xi}_m }v_m^2+\overline{M}\sum_{m=1}^{N_\mathup{ch}}\abs{\mathcal{R}^{\xi}_m }v_m^2 \leq (\tilde{C}+\overline{M})\sum_{m=1}^{N_\mathup{ch}}\abs{\mathcal{R}^{\xi}_m }v_m^2,
\end{align*}
where $\abs{\mathcal{R}^{\xi}_m},\abs{\partial\mathcal{R}^{\xi}_m}$ defined in the last two lines are the number of nodes in $\mathcal{R}^{\xi}_m,\partial\mathcal{R}^{\xi}_m$, respectively. $\tilde{C}$ is a regularity constant appearing in (2) of Assumption \ref{assumption on partitions}.
\begin{align*}
v^T S v&=\sum_{x\in\mathcal{N}}(\tilde{L}_{x}+M_x)v_m^2=\sum_{m=1}^{N_\mathup{ch}}\sum_{x\in\mathcal{R}^{\xi}_m}(\tilde{L}_x+M_x)v_m^2\\
&\geq \sum_{m=1}^{N_\mathup{ch}}\sum_{x\in\mathcal{R}^{\xi}_m}\big(\frac{1}{2}\sum_{\substack{y\sim x\\ y\in\mathcal{R}^{\xi}_m}}L_{xy}\big)v_m^2\geq \sum_{m=1}^{N_\mathup{ch}}\sum_{x\in\mathcal{R}^{\xi}_m}\big(\frac{1}{2}C_L^{\mathup{inf}}\xi\big)v_m^2\geq \frac{1}{2}C_L^{\mathup{inf}}\xi\sum_{m=1}^{N_\mathup{ch}}\abs{\mathcal{R}^{\xi}_m}v_m^2.
\end{align*}
Therefore, combining above two inequalities, we obtain that
\begin{align*}
R(v)=\frac{v^TAv}{v^TSv}\leq \frac{(\tilde{C}+\overline{M})\sum_{m=1}^{N_\mathup{ch}}\abs{\mathcal{R}^{\xi}_m }v_m^2}{\frac{1}{2}C_L^{\mathup{inf}}\xi\sum_{m=1}^{N_\mathup{ch}}\abs{\mathcal{R}^{\xi}_m}v_m^2}\leq \frac{2(\tilde{C}+\overline{M})}{C_L^{\mathup{inf}}}\cdot\frac{1}{\xi}.
\end{align*}
Now we present that there are at most $N_\mathup{ch}$ small eigenvalues. We have that
$$\lambda_{N_\mathup{ch}+1}=\min_{\dim(W)=N_\mathup{ch}+1}\max_{v\in W\backslash\{0\}}R(v).$$

Then we have to show that for every $(N_\mathup{ch}+1)$-dimensional subspace $W$, there exists a vector $v\in W$ such that $\mathcal{R}(v)\geq C'$, where $C'$ is independent of $\xi$.  Define the subspace 
$$V_{\mathup{po}}=\{v\in V\colon \frac{1}{2}\sum_{\substack{x\in\mathcal{R}^{\xi}_m\\ y\sim x}}v(x)=0\hspace{0.3em}{\rm for \hspace{0.3em} all}\hspace{0.3em}m=1,2,...,N_\mathup{ch}\}.$$
The subspace $V_\mathup{po}$ is of codimension $N_\mathup{ch}$. Note that for every $v\in V_\mathup{po}$, we can apply the poincare's inequality in every $\mathcal{R}^{\xi}_m$. Then we have
$$\sum_{\substack{x\in \mathcal{R}^\xi\\ y\sim x}}\frac{1}{2}v(x)^2=\sum_{m=1}^{N_\mathup{ch}}\frac{1}{2}\sum_{\substack{x\in\mathcal{R}^\xi_m\\ y\sim x}}v(x)^2\leq C_1\sum_{\substack{x\in \mathcal{R}^\xi\\ y\sim x}}\frac{1}{2}(v(x)-v(y))^2,$$
where $C_1$ is independent of $\xi$ , but depends on the number and volume of  $\{\mathcal{R}^\xi_m\}$. That is, we have
\begin{equation}
\label{poin 1}
(\xi-1)\sum_{\substack{x\in \mathcal{R}^\xi\\ y\sim x}}\frac{1}{2}v(x)^2\leq(\xi-1)C_1\sum_{\substack{x\in \mathcal{R}^\xi\\ y\sim x}}\frac{1}{2}(v(x)-v(y))^2
\end{equation}
for all $v\in V_\mathup{po}$. We can apply the standard poincare inequality to vectors on $V_\mathup{po}$
\begin{equation}
\label{poin 2}
\sum_{\mathcal{N},\sim}v(x)^2\leq C_2\sum_{\mathcal{N},\sim}(v(x)-v(y))^2,
\end{equation}
where $C_2$ is independent of $\xi$.
Adding (\ref{poin 1}) and (\ref{poin 2}), we obtain that there exists a constant $C_3$ independent of $\xi$ such that
\begin{align*}
\sum_{\mathcal{N},\sim}L_{xy}v(x)^2&\leq (\xi-1)\sum_{\substack{x\in \mathcal{R}^\xi\\ y\sim x}}\frac{1}{2}v(x)^2+\sum_{\mathcal{N},\sim}v(x)^2\leq C_3\sum_{\mathcal{N},\sim}L_{xy}(v(x)-v(y))^2.
\end{align*}
Therefore, we have
\begin{align*}
\sum_{\mathcal{N},\sim}L_{xy}v(x)^2+\sum_{x\in\mathcal{N}}M_xv(x)^2&\leq C^\#(\sum_{\mathcal{N},\sim}L_{xy}(v(x)-v(y))^2+\sum_{x\in\mathcal{N}}M_xv(x)^2),
\end{align*}
where $C^\#$ is independent of $\xi$, but depends on the number and volume of $\{\mathcal{R}^\xi_m\}$. 
Let $W\subset V$ be a subspace of dimension $N_\mathup{ch}+1$. We have that the intersection between $W$ and $V_\mathup{po}$ is a subspace of dimension at least one. Then we select $v\in W\cap V_\mathup{po}$ with $v\neq 0$, and for this vector, we get
$$R(v)=\frac{v^TAv}{v^TSv}=\frac{\sum_{\mathcal{N},\sim}L_{xy}(v(x)-v(y))^2+\sum_{x\in\mathcal{N}}M_xv(x)^2}{\sum_{x\in\mathcal{N}}(\tilde{L}_x+M_x)v(x)^2}\geq\frac{1}{C^\#},$$
where $C^{\#}$ does not depend on $\xi$ , but depends on the number and volume of $\{\mathcal{R}^\xi_m\}$.
Therefore, we obtain that $\lambda_m=O\big(\frac{1}{\xi}\big)$ for $m=1,2,...,N_{\mathrm{ch}}$ and $\lambda_m\geq C$ with $C$ independent of $\xi$ for $m\geq N_{\mathrm{ch}}+1$.

\section{An application to finite element discretization of elliptic PDEs} \label{application}
In this section, we apply our proposed method to finite element discretizations of elliptic partial differential equations. We show how the discretization in spatial networks retains similar properties to the multiscale method in the continuous setting.

We consider the domain $\Omega=(0,1)^2$ and the problem 
\begin{subequations}
	\label{model problem for the first example}
	\begin{align}
		-\text{div}(k\nabla u)&=g \quad \text{in}\quad \Omega, \label{model problem for the first example_a}\\
  u&=0\quad \text{on}\quad \partial\Omega,
	\end{align}
\end{subequations}
where $k$ is a high-contrast multiscale field. 
We define $\mathcal{T}_h$ be an unstructured fine mesh partition and 
$\mathcal{T}_H$ is coarse partition where each coarse element is connected and contains some complete fine meshes (see Figure \ref{fine mesh} where 23646 fine elements are partitioned into 100 parts.). 
We also denote $\mathcal{T}_H\coloneqq\{K_1,K_2,...,K_N\}$ (Recall that $N$ is the number of coarse elements). 

Let $\mathcal{N}_i$ be the set of all nodes contained in $K_i$ for $1\leq i\leq N$ and let $\mathcal{N}_i^l$ be the set of all nodes contained in $K^l_i$.
The weak form for (\ref{model problem for the first example}) is 
$$\int_{\Omega}k\nabla u\cdot\nabla vdx=\int_{\Omega}gvdx$$
for all $v\in H^1_0(\Omega)$. 

A standard fine finite element discretization (using continuous piecewise $P1$ polynomial) for this problem is: find $u_h\in V_h\coloneqq\text{span}\{\beta_1,\beta_2,...,\beta_{N_h}\}$ such that (here $N_h\coloneqq \#{\mathcal{N}}$ is the number of nodes contained in $\mathcal{N}$. 
$\{\beta_i\}_{i=1}^{N_h}$ are standard nodal basis functions corresponding to the nodes $\mathcal{N}=\{x_1,x_2,...,x_{N_h}\}$ defined on the fine mesh.)
\begin{equation}
\label{P1 discretization for first example}
\int_{\Omega}k\nabla u_h\cdot\nabla v_hdx=\int_{\Omega} gv_hdx\quad\forall v_h\in V_h.
\end{equation}
The underlying matrix form for (\ref{P1 discretization for first example}) can be expressed as 
\begin{equation}
\label{matrix form for weak form}
A\mathbf{u}=\mathbf{b},
\end{equation}
where $A_{ij}=\int_{\Omega}k\nabla\beta_j\cdot\nabla\beta_idx$, $\mathbf{u}=(u_1,u_2,...,u_{N_h})^T$, $\mathbf{b}=(b_1,b_2,...,b_{N_h})^T$ and $b_i=\int_{\Omega}g\beta_idx$. 
It is also clear that $u_i=u_h(x_i)$. 

Next we consider the relationship between $A$ and the $(L+M)$-form matrix defined in our method. Recall that the vector space $V$ is the space of all real-valued functions defined on the nodes $\mathcal{N}$.
Since $V$ is an isomorphism to $\Real^{N_h}$, we can take a basis for $V\coloneqq \{\mathbf{e}_1,\mathbf{e}_2,...,\mathbf{e}_{N_h}\}$, where the $i$-th component of $\mathbf{e}_i$ is 1 (i.e. $\mathbf{e}_i(x_i)=1$) and other components of $\mathbf{e}_i$ are zeros. 
In term of (\ref{model problem}),
$$\sum_{\mathcal{N},\sim}L_{xy}(u(x)-u(y))(\mathbf{e}_i(x)-\mathbf{e}_i(y))+\sum_{x\in\mathcal{N}}M_xu(x)\mathbf{e}_i(x)=\sum_{x\in\mathcal{N}}f(x)\mathbf{e}_i(x),$$
for all $i=1,2,..,N_h$. 
Since $\mathbf{e}_i(x_j)=\delta_{ij}$, we have the following linear system:
\begin{subequations}
	\label{linear equations for our method}
	\begin{align}
		\sum_{\substack{x_1,y\in\mathcal{N}\\ y\sim x_1}}\frac{1}{2}L_{x_1 y}(u(x_1)-u(y))+M_{x_1}u(x_1)&=f(x_1),\\
  \sum_{\substack{x_2,y\in\mathcal{N}\\ y\sim x_2}}\frac{1}{2}L_{x_2 y}(u(x_2)-u(y))+M_{x_2}u(x_2)&=f(x_2),\\
  ....\nonumber&\\
  \sum_{\substack{x_{N_h},y\in\mathcal{N}\\ y\sim x_{N_h}}}\frac{1}{2}L_{x_{N_h}y}(u(x_{N_h})-u(y))+M_{x_{N_h}}u(x_{N_h})&=f(x_{N_h}).
	\end{align}
\end{subequations}
From (\ref{linear equations for our method}), we clearly find the matrix $L,M$ and the vecter $f$. 
More precisely, $L_{ii}=\sum_{y\sim x_i}L_{x_iy}$, $L_{ij}=-L_{x_ix_j}$ for $x_i\sim x_j$ and $L_{ij}=0$ if $x_i$ and $x_j$ are not connected. 
$M=\text{diag}(M_{x_1},M_{x_2},...,M_{x_{N_h}})$. $f=(f_1,f_2,...,f_{N_h})^T$ with $f_i=f(x_i)$. 
Therefore, when we apply our method to the first example, we find the relationship between the finite element discretization for the first example and our method is:
$$A=L+M,\quad A_{ij}=L_{ij}+M_{ij},\quad b_i=f_i$$
for $1\leq i,j\leq N_h$. 
The solution $\mathbf{u}=(u_1,u_2,...,u_{N_h})^T$ with $u_i=u_h(x_i)$ obtained in (\ref{matrix form for weak form}) plays the same role as the solution $(u(x_1),u(x_2),...,u(x_{N_h}))^T$ computed in (\ref{linear equations for our method}).

Next we apply our method to construct the multiscale basis functions in the following.
We first construct the auxiliary multiscale basis space.
The local spectral problem is: find a real number $\lambda^i_{j}$ and a function $\phi^i_{j}\in V_h({K}_i)$ such that
\begin{equation}
\label{local spectral problem for the first example}
\int_{K_i}k\nabla\phi^i_{j}\cdot\nabla w_hdx=\lambda^i_j\int_{K_i}kH^{-2}\phi^i_{j}\cdot w_hdx\quad \forall w_h\in V_h({K}_i),
\end{equation}
where $V_h({K}_i)$ is the restriction of $V_h$ on $K_i$. Let $\beta_{i,l}$ $(1\leq l\leq N_{h,i})$ is nodal basis function corresponding to nodes in $\mathcal{N}_i$,
and $N_{h,i}\coloneqq\#{{\mathcal{N}}_i}$ is the number of nodes contained in $\mathcal{N}_i$. 
We denote that the nodal basis functions $\{\beta_{i,1},\beta_{i,2},...,\beta_{i,N_{h,i}}\}$ correspond to $\{x_{i,1},x_{i,2},...,x_{i,N_{h,i}}\}$ which are the nodes contained in $\mathcal{N}_i$.
Then the matrix representation for (\ref{local spectral problem for the first example}) is
\begin{equation}
\label{local spectral matrix form}
A_{i}\mathbf{c}_{i}=\lambda^i_jB_{i}\mathbf{c}_{i},
\end{equation}
where $(A_i)_{lj}=\int_{K_i}k\nabla\beta_{i,j}\cdot\nabla\beta_{i,l}$, $(B_i)_{lj}=\int_{K_i}kH^{-2}\beta_{i,j}\beta_{i,l}$, and $\mathbf{c}_{i}=(c_1,c_2,...,c_{N_{h,i}})^T$ with $c_l=\phi^i_{j}(x_{i,l})$ (Actually, we let $\phi^i_{j}=\sum_{l=1}^{N_{h,i}}c_l\beta_{i,l}$ in the local spectral problem (\ref{local spectral problem for the first example})) for $1\leq l,j\leq N_{h,i}$. 

Next we consider the relationship between (\ref{local spectral matrix form}) and the $(L+M)$-form matrix restricted to $\mathcal{N}_i$ defined in our method. Recall that the vector space $V(\mathcal{N}_i)$ is the space of all real-valued functions defined on the nodes in $\mathcal{N}_i$.
Since $V(\mathcal{N}_i)$ is an isomorphism to $\Real^{N_{h,i}}$, we can take a basis for $V(\mathcal{N}_i)$: $\{\mathbf{e}_{i,1},\mathbf{e}_{i,2},...,\mathbf{e}_{i,N_{h,i}}\}$, where the $j$-th component of  $\mathbf{e}_{i,l}$ is  $\mathbf{e}_{i,l}(x_{i,j})=\delta_{lj}$. 

In terms of (\ref{spectral problem})-(\ref{def of a and s}), we have
$$\sum_{\mathcal{N}_i,\sim}L_{xy}(\phi^i_j(x)-\phi^i_j(y))(\mathbf{e}_{i}(x)-\mathbf{e}_{i}(y))+\sum_{x\in\mathcal{N}_i}M_x\phi^i_j(x)\mathbf{e}_{i}(x)=\lambda^i_j\sum_{x\in\mathcal{N}_i}\phi^i_j(x)\mathbf{e}_{i}(x),$$
for all $i=1,2,..,N_{h,i}$. 
Since $\mathbf{e}_{i}(x_{i,j})=\delta_{ij}$, we have the following linear system on each $\mathcal{N}_i(1\leq i\leq N)$:
\begin{subequations}
	\label{linear equations for local spectral problem}
	\begin{align}
		\sum_{\substack{y\in\mathcal{N}_i\\ y\sim x_{i,1}}}\frac{1}{2}L_{x_{i,1}y}(\phi^i_j(x_{i,1})-\phi^i_j(y))+M_{x_{i,1}}\phi^i_j(x_{i,1})&=\lambda^i_j(\tilde{L}_{x_{i,1}}+M_{x_{i,1}})(C_{\mathrm{po}})^{-2}\cdot\phi^i_j(x_{i,1}),\\
  \sum_{\substack{y\in\mathcal{N}_i\\ y\sim x_{i,2}}}\frac{1}{2}L_{x_{i,2}y}(\phi^i_j(x_{i,2})-\phi^i_j(y))+M_{x_{i,2}}\phi^i_j(x_{i,2})&=\lambda^i_j(\tilde{L}_{x_{i,2}}+M_{x_{i,2}})(C_{\mathrm{po}})^{-2}\cdot\phi^i_j(x_{i,2}),\\
  ....\nonumber&\\
  \sum_{\substack{y\in\mathcal{N}_i\\ y\sim x_{i,N_{h,i}}}}\frac{1}{2}L_{x_{i,N_{h,i}}y}(\phi^i_j(x_{i,N_{h,i}})-\phi^i_j(y))+M_{x_{i,N_{h,i}}}\phi^i_j(x_{i,N_{h,i}})
&=\lambda^i_j(\tilde{L}_{x_{i,N_{h,i}}}+M_{x_{i,N_{h,i}}})(C_{\mathrm{po}})^{-2}\cdot\phi^i_j(x_{i,N_{h,i}}).
\end{align}
\end{subequations}
From (\ref{linear equations for local spectral problem}), we clearly represent the local spectral problem in the matrix form as
\begin{equation}
\label{local matrix}
(L_i+M_i)\mathbf{p}_{i}=\lambda^i_jS_i\mathbf{p}_{i},
\end{equation}
where $(L_{i})_{ll}=\sum_{\substack{y\in\mathcal{N}_i\\ y\sim x_{i,l}}}L_{x_{i,l}y}$, $(L_i)_{lm}=-L_{x_{i,l}x_{i,m}}$ for $x_{i,l}\sim x_{i,m}$ and $(L_i)_{lm}=0$ if $x_{i,l}$ and $x_{i,m}$ are not connected. 
$M_i=\text{diag}(M_{x_{i,1}},M_{x_{i,2}},...,M_{x_{i,N_h}})$. $(S_i)_{lm}=\delta_{lm}(C_{\mathrm{po}})^{-2}(\tilde{L}_{x_{i,l}}+M_{x_{i,l}})$. 
$\mathbf{p}_{i}=(p_1,p_2,...,p_{N_{h,i}})^T$ with $p_l=\phi^i_j(x_{i,l})$ (Actually, we have assumed $\phi^i_j=\sum_{l=1}^{N_{h,i}}p_l\mathbf{e}_{i,l}$ in (\ref{linear equations for local spectral problem})). 

Note that the solution $(\lambda^i_j,\mathbf{c}_{i})$  in (\ref{local spectral matrix form}) plays the same role as the solution $(\lambda^i_j,\mathbf{p}_{i})$ computed in (\ref{local matrix}).

 Then we use the first $l_i$ eigenfunctions to construct our local auxiliary multiscale space $V_\mathup{aux}^i$ and global auxiliary multiscale space $V_\mathup{aux}$. 
 Based on the auxiliary multiscale space,  we can construct the multiscale basis functions. 
 
 For easier understanding, we will also describe the relationship between the finite element discretization for the first example and $(L+M)$-form matrix in our method. 
 That is, we talk about how our method applied to the finite element discretization of the first example. 
 We find $\psi^i_{j,l}\in V_{h}(K_i^l)$ ($V_{h}(K_i^l)$ is the restriction of $V_h$ on $K_i^l$ ) such that
 \begin{equation}
\label{minin-variational problem}
\int_{K_i^l}k\nabla\psi^i_{j,l}\cdot\nabla w_hdx+\int_{K_i^l}\pi(\psi^i_{j,l})\cdot\pi(w_h)dx=\int_{K_i^l}kH^{-2}\phi^i_j\cdot w_hdx\quad \forall w_h\in V_{h}(K_i^l)
 \end{equation}
In fact, we can take $V_{h}(K_i^l)\coloneqq\text{span}\{\beta^l_{i,1},\beta^l_{i,2},...,\beta^l_{i,N^l_i}\}$, where $N^l_i\coloneqq\# \mathcal{N}^l_i$ is the number of nodes contained in $\mathcal{N}^l_i$. $\{\beta^l_{i,1},\beta^l_{i,2},...,\beta^l_{i,N^l_i}\}$ are nodal basis functions corresponding to the nodes $\{X^l_{i,1},X^l_{i,2},...,X^l_{i,N^l_i}\}$ contained in $\mathcal{N}^l_i$. Then the matrix representation for (\ref{minin-variational problem}) is
\begin{equation}
\label{oversampling matrix form}
(A^l_i+S^l_i)\mathbf{c}^l_i=\mathbf{b}^l_i,
\end{equation}
where $(A^l_i)_{lj}=\int_{K_i^l}k\nabla\beta^l_{i,j}\cdot\nabla\beta^l_{i,l}dx$, $(S^l_i)_{lj}=\int_{K_i^l}kH^{-2}\pi(\beta^l_{i,j})\cdot\pi(\beta^l_{i,l})dx$, and $\mathbf{b}^l_i=(b_1,b_2,...,b_{N^l_i})^T$ with $b_j=\int_{K_i^l}kH^{-2}\phi^i_j\cdot\beta^m_{i,l}dx$.  $\mathbf{c}^l_i=(c_1,c_2,...,c_{N^l_i})^T$ with $c_j=\psi^i_{j,l}(X^l_{i,j})$. (Actually, we let $\psi^i_{j,l}=\sum_{m=1}^{N^l_i}c_l\beta^l_{i,m}$ in the variational form (\ref{minin-variational problem}).) 

Next we consider the relationship between (\ref{oversampling matrix form}) and the $(L+M)$-form matrix restricted to $\mathcal{N}^l_i$ defined in our method. Recall that the vector space $V(\mathcal{N}^l_i)$ is the space of all real-valued functions defined on the nodes $\mathcal{N}^l_i$.
Since $V(\mathcal{N}^l_i)$ is an isomorphism to $\Real^{\#\mathcal{N}^l_i}$, we can take a basis for $V(\mathcal{N}^l_i)$: $\{\mathbf{e}^l_{i,1},\mathbf{e}^l_{i,2},...,\mathbf{e}^l_{i,N^l_i}\}$, where the $l$-th component of  $\mathbf{e}^l_{i,j}$ is  $\mathbf{e}^l_{i,j}(X^l_{i,m})=\delta_{jm}$. 
In terms of (\ref{variational_relaxed_cem}),
\begin{align}
\label{oversampling L M}
&\quad\sum_{\mathcal{N},\sim}\frac{1}{2}L_{xy}(\psi^i_{j,l}(x)-\psi^i_{j,l}(y))(v(x)-v(y))+\sum_{x\in\mathcal{N}}M_x\psi^i_{j,l}(x)v(x)+\sum_{x\in\mathcal{N}}(\tilde{L}_x+M_x)(C_{\mathrm{po}})^{-2}\cdot\psi^i_{j,l}(x)\cdot\pi v(x)\nonumber\\
&=\sum_{x\in\mathcal{N}}(\tilde{L}_x+M_x)(C_{\mathrm{po}})^{-2}\cdot\phi^i_j(x)\cdot v(x)
\end{align}
for all $v\in V(\mathcal{N}^l_i)$. 
Note that 
\begin{align*}
(\pi(\mathbf{e}^l_{i,k}))(X^l_{i,m})&=(\sum_{i=1}^N\sum_{j=1}^{l_i}s_i(\mathbf{e}^l_{i,k},\phi^i_j)\cdot\phi^i_j)(X^l_{i,m}) =\big(\sum_{i=1}^N\sum_{j=1}^{l_i}\phi^i_j\cdot(\sum_{x\in\mathcal{N}_i}(\tilde{L}_x+M_x)(C_{\mathrm{po}})^{-2}\mathbf{e}^l_{i,k}(x)\cdot\phi^i_j(x))\big)(X^l_{i,m})\\
&=\sum_{j=1}^{l_i}\delta_{km}(\tilde{L}_{X^l_{i,m}}+M_{X^l_{i,m}})(C_{\mathrm{po}})^{-2}(\phi^i_j(X^l_{i,m}))^2.
\end{align*}
Taking test function $v=\mathbf{e}^l_{i,j}$ for $1\leq j\leq N^l_i$ in (\ref{oversampling L M}), we have the following linear system
\begin{subequations}
	\label{linear equations for relax cem}
\begin{align}
&\sum_{y\sim X^l_{i,1}}L_{X^l_{i,1}y}(\psi^i_{j,l}(X^l_{i,1})-\psi^i_{j,l}(y))+M_{X^l_{i,1}}\psi^i_{j,l}(X^l_{i,1})\nonumber\\
&+(\tilde{L}_{X^l_{i,1}}+M_{X^l_{i,1}})^2(C_{\mathrm{po}})^{-2}(\sum_{j=1}^{l_i}(\phi^i_j(X^l_{i,1}))^2)\psi^i_{j,l}(X^l_{i,1})=(\tilde{L}_{X^l_{i,1}}+M_{X^l_{i,1}})(C_{\mathrm{po}})^{-2}\cdot\phi^i_{j}(X^l_{i,1}),\\
 &\sum_{y\sim X^l_{i,2}}L_{X^l_{i,2}y}(\psi^i_{j,l}(X^l_{i,2})-\psi^i_{j,l}(y))+M_{X^l_{i,2}}\psi^i_{j,l}(X^l_{i,2})\nonumber\\
  &+(\tilde{L}_{X^l_{i,2}}+M_{X^l_{i,2}})^2(C_{\mathrm{po}})^{-2}(\sum_{j=1}^{l_i}(\phi^i_j(X^l_{i,2}))^2)\psi^i_{j,l}(X^l_{i,2})=(\tilde{L}_{X^l_{i,2}}+M_{X^l_{i,2}})(C_{\mathrm{po}})^{-2}\cdot\phi^i_{j}(X^l_{i,2}),\\
  &\quad\quad......\nonumber\\
  &\sum_{y\sim X^l_{i,N^l_i}}L_{X^l_{i,N^l_i}y}(\psi^i_{j,l}(X^l_{i,N^l_i})-\psi^i_{j,l}(y))+M_{X^l_{i,N^l_i}}\psi^i_{j,l}(X^l_{i,N^l_i})\nonumber\\
  &+(\tilde{L}_{X^l_{i,N^l_i}}+M_{X^l_{i,N^l_i}})^2(C_{\mathrm{po}})^{-2}(\sum_{j=1}^{l_i}(\phi^i_j(X^l_{i,N^l_i}))^2)\psi^i_{j,l}(X^l_{i,N^l_i})=(\tilde{L}_{X^l_{i,N^l_i}}+M_{X^l_{i,N^l_i}})(C_{\mathrm{po}})^{-2}\cdot\phi^i_{j}(X^l_{i,N^l_i}).
\end{align}
\end{subequations}
The corresponding matrix form for (\ref{linear equations for relax cem}) is
\begin{equation}
\label{matric for oversampling L M}
(L^l_i+M^l_i+R^l_i)\mathbf{p}^l_i=\mathbf{q}^l_i,
\end{equation}
where $(L^l_{i})_{ll}=\sum_{y\sim X^l_{i,m}}L_{X^l_{i,m}y}$, $(L^l_i)_{mj}=-L_{X^l_{i,m}X^l_{i,j}}$ for $X^l_{i,m}\sim X^l_{i,j}$ and $(L^l_i)_{mj}=0$ if $X^l_{i,m}$ and $X^l_{i,j}$ are not connected. 
$M^l_i=\text{diag}(M_{X^l_{i,1}},M_{X^l_{i,2}},...,M_{X^l_{i,N^l_i}})$. $(R^l_i)_{mj}=\delta_{mj}(C_{\mathrm{po}})^{-2}(\tilde{L}_{X^l_{i,m}}+M_{X^l_{i,m}})^2(\sum_{j=1}^{l_i}(\phi^i_j(X^l_{i,N^l_i}))^2)$. $\mathbf{q}^l_i=(q_1,q_2,...,q_{N^l_i})^T$ with $q_l=(\tilde{L}_{X^l_{i,m}}+M_{X^l_{i,m}})(C_{\mathrm{po}})^{-2}\cdot\phi^i_{j}(X^l_{i,m})$.  
$\mathbf{p}^l_i=(p_1,p_2,...,p_{N^l_i})^T$ with $p_l=\psi^i_{j,l}(X^l_{i,m})$ (Actually, we have assumed $\psi^i_{j,l}=\sum_{m=1}^{N^l_i}p_m\mathbf{e}^l_{i,m}$ in (\ref{variational_relaxed_cem})). 

Note that the solution $\mathbf{c}^l_{i}$  in (\ref{oversampling matrix form}) plays the same role as the solution $\mathbf{p}^l_i$ computed in (\ref{matric for oversampling L M}).
More precisely, $A^l_{i}$ corresponds to $L^l_i+M^l_i$, $S^l_{i}$ corresponds to $R^l_i$ and $\mathbf{b}^l_{i}$ corresponds to $\mathbf{q}^l_i$. 

After obtaining the multiscale basis functions and global multiscale basis functions, we can solve corresponding linear systems to obtain $u_\mathup{glo}$ and $u_\mathup{ms}$. 

\section{Numerical experiments}
\label{numerical results}

In this section, we present a series of comprehensive numerical experiments to demonstrate the efficiency, accuracy, and robustness of the proposed CEM-GMsFEM framework for spatial network models. To ensure the reliability and reproducibility of our results, all numerical simulations and evaluations are conducted on a high-performance computing workstation. The hardware configuration features dual Intel\textsuperscript{\textregistered} Xeon\textsuperscript{\textregistered} Gold 6230 processors, providing a total of 80 computational cores, along with 1024 GB of main memory (RAM). Furthermore, to promote open science and facilitate community engagement, the complete source code and implementation details used in this study have been made publicly accessible via our GitHub repository\footnote{\href{https://github.com/pentaery/Algebraic\_CEM}{https://github.com/pentaery/Algebraic\_CEM}}.

The implementation workflow is as follows. We first generate the finite element mesh using Gmsh \cite{geuzaine2009gmsh} and assemble two sparse matrices, L and U from \Cref{Mass} and \Cref{def_L}. We then apply METIS \cite{karypis1997metis} to the graph induced by $L$ and partition it into $\mathcal{N}$ non-overlapping subregions, which are used to construct the coarse decomposition and oversampling neighborhoods. Finally, all major linear algebra operations in the offline and online stages are performed with Intel MKL routines \cite{wang2014intel} (including sparse matrix kernels and linear solvers), ensuring computational efficiency and scalability.

Since the number of subregions $\mathcal{N}$ is typically large, the constructions of $V_\mathup{aux}$ and $V_\mathup{ms}$ involve many repeated local loops. These local computations are largely independent across subregions (and basis indices), so we use OpenMP to parallelize the corresponding loops. In practice, each thread handles a subset of local spectral problems and local energy-minimization solves, while synchronization is only required at the assembly stage. This parallel strategy substantially reduces the offline computational time without changing the numerical results.
To quantify this acceleration, we provide a dedicated strong-scaling and weak-scaling evaluation protocol in \Cref{sec:openmp-performance}.

\subsection{Poisson equation with heterogeneous coefficients}
To systematically evaluate the performance of our proposed multiscale method, we consider the classical Poisson equation with highly heterogeneous coefficients and homogeneous Dirichlet boundary conditions. The underlying setting adheres to the finite element discretization framework presented in Section \ref{application}. 

We conduct our numerical experiments on a representative case with a regular square mesh partition with complex highly heterogeneous coefficients as shown in Figure \ref{fig:mesh_partition}.  The domain is discretized using conforming piecewise linear ($P_1$) finite elements generated via Gmsh. Following the standard Galerkin assembly process, this fine-scale discretization naturally yields a large-scale linear system $A\mathbf{u}=\mathbf{b}$, which serves as the direct algebraic input for our multiscale basis construction. Unless otherwise specified, the right-hand side source term is chosen as 
\[
\mathbf{b} = 2\pi^2\sin(\pi x)\sin(\pi y)
\] 
in all subsequent numerical examples.

A core component of the proposed method is the localized formulation based on overlapping subgraphs. First, the fine-scale matrix connectivity graph is partitioned into 100 non-overlapping subdomains using the METIS algorithm (see Figure \ref{fig:mesh_partition}). Subsequently, overlapping regions are systematically generated by expanding each localized subdomain layer by layer. The appropriate construction of these oversampled spaces is crucial for guaranteeing the exponential decay of the multiscale basis functions. Figure \ref{fig:overlap} visually illustrates the progressive expansion of an arbitrary subgraph into 1-layer, 2-layer, 3-layer, and 4-layer overlapping regions.

\begin{figure}[ht!]
    \centering
    \begin{subfigure}[b]{0.49\textwidth}
        \centering
        \includegraphics[width=\textwidth]{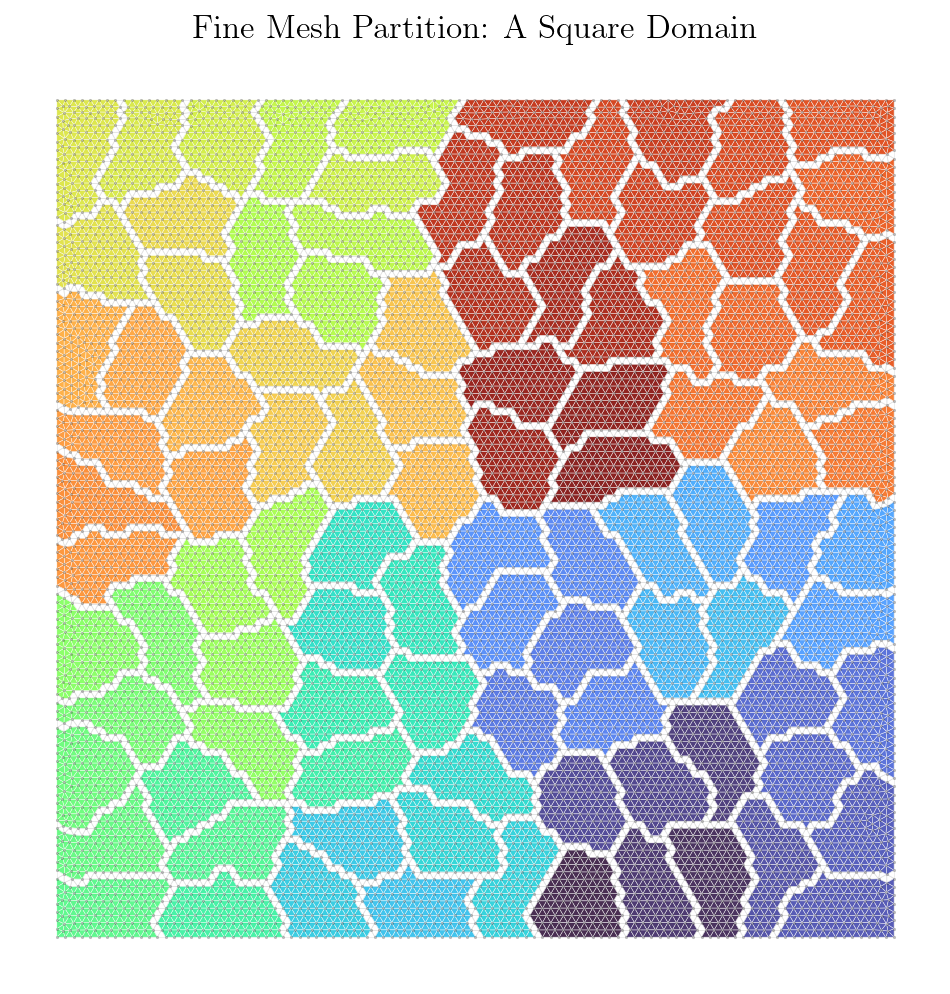}
        \caption{}
        \label{fig:fine mesh}
    \end{subfigure}
    \hspace{0.5em}%
    \begin{subfigure}[b]{0.49\textwidth}
        \centering
        \raisebox{0.6cm}{\includegraphics[width=\textwidth]{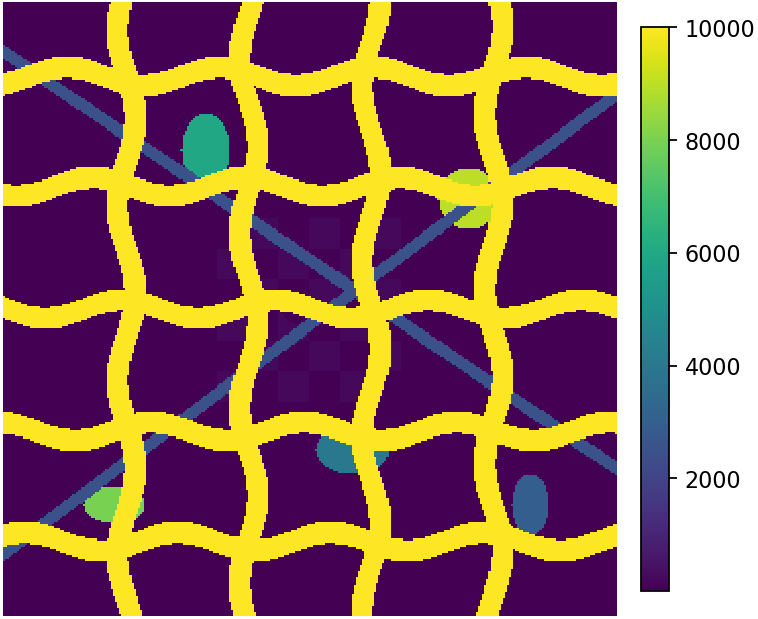}}
        \caption{}
        \label{fig:coarse element}
    \end{subfigure}
    \caption{(a) Square domain discretized into 23,646 elements, partitioned into 100 subgraphs; (b) The high-contrast medium configuration.}
    \label{fig:mesh_partition}
\end{figure}

\begin{figure}[ht!]
    \centering
    \begin{subfigure}{0.23\textwidth}
        \centering
        \includegraphics[width=\textwidth]{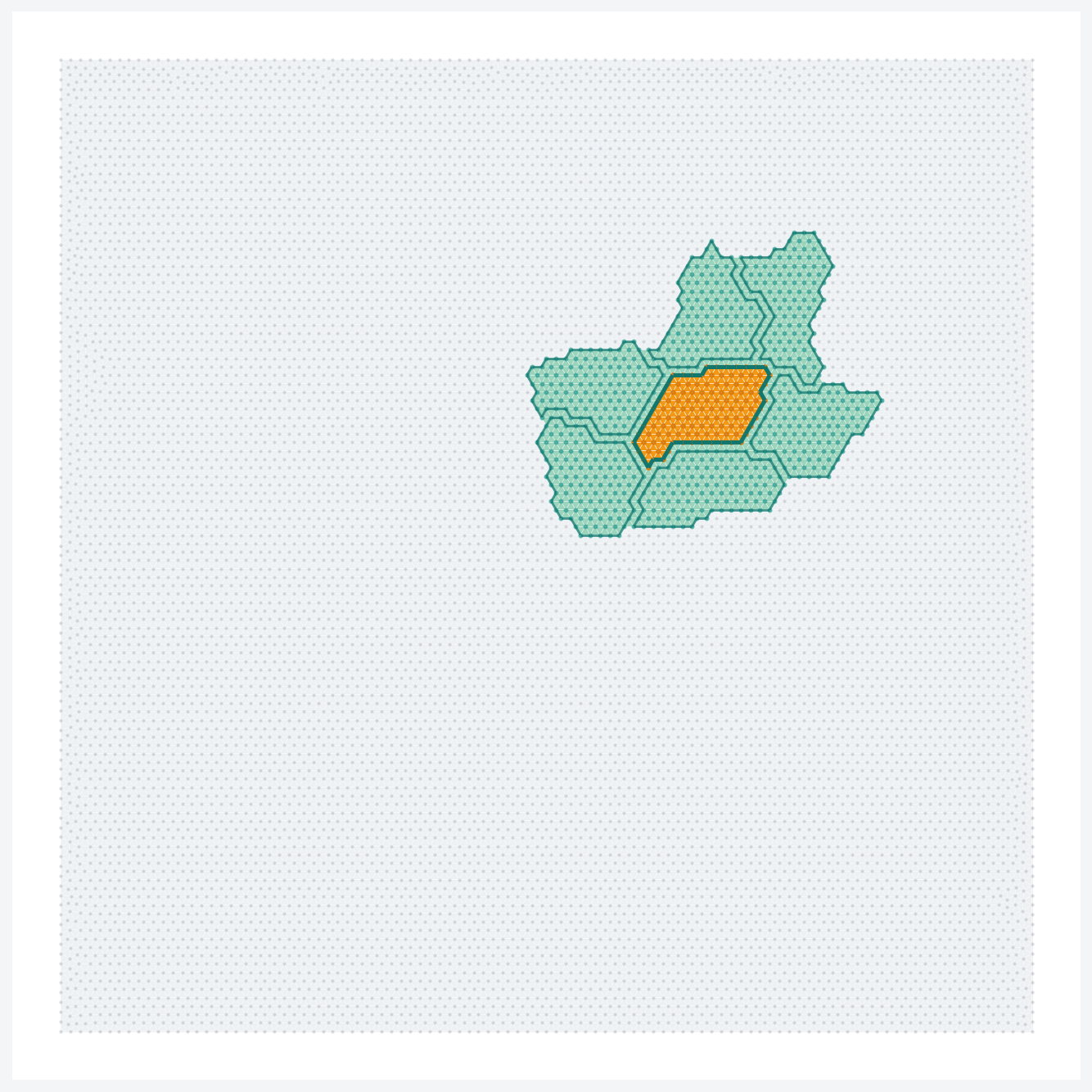}
        \label{overlap_1}
    \end{subfigure}
    \hspace{0.5em}%
    \begin{subfigure}{0.23\textwidth}
        \centering
        \includegraphics[width=\textwidth]{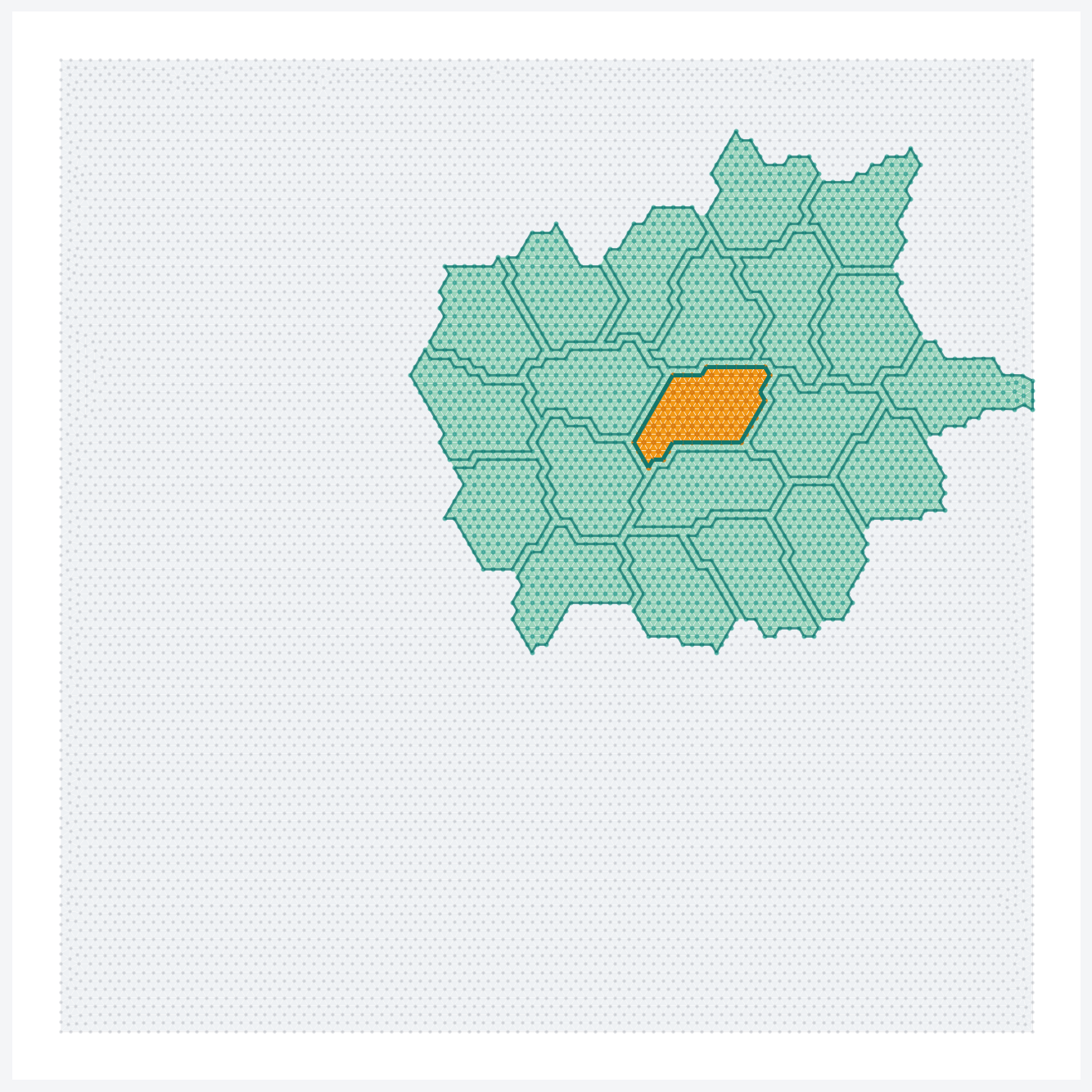}
        \label{overlap_2}
    \end{subfigure}
        \hspace{0.5em}%
    \begin{subfigure}{0.23\textwidth}
        \centering
        \includegraphics[width=\textwidth]{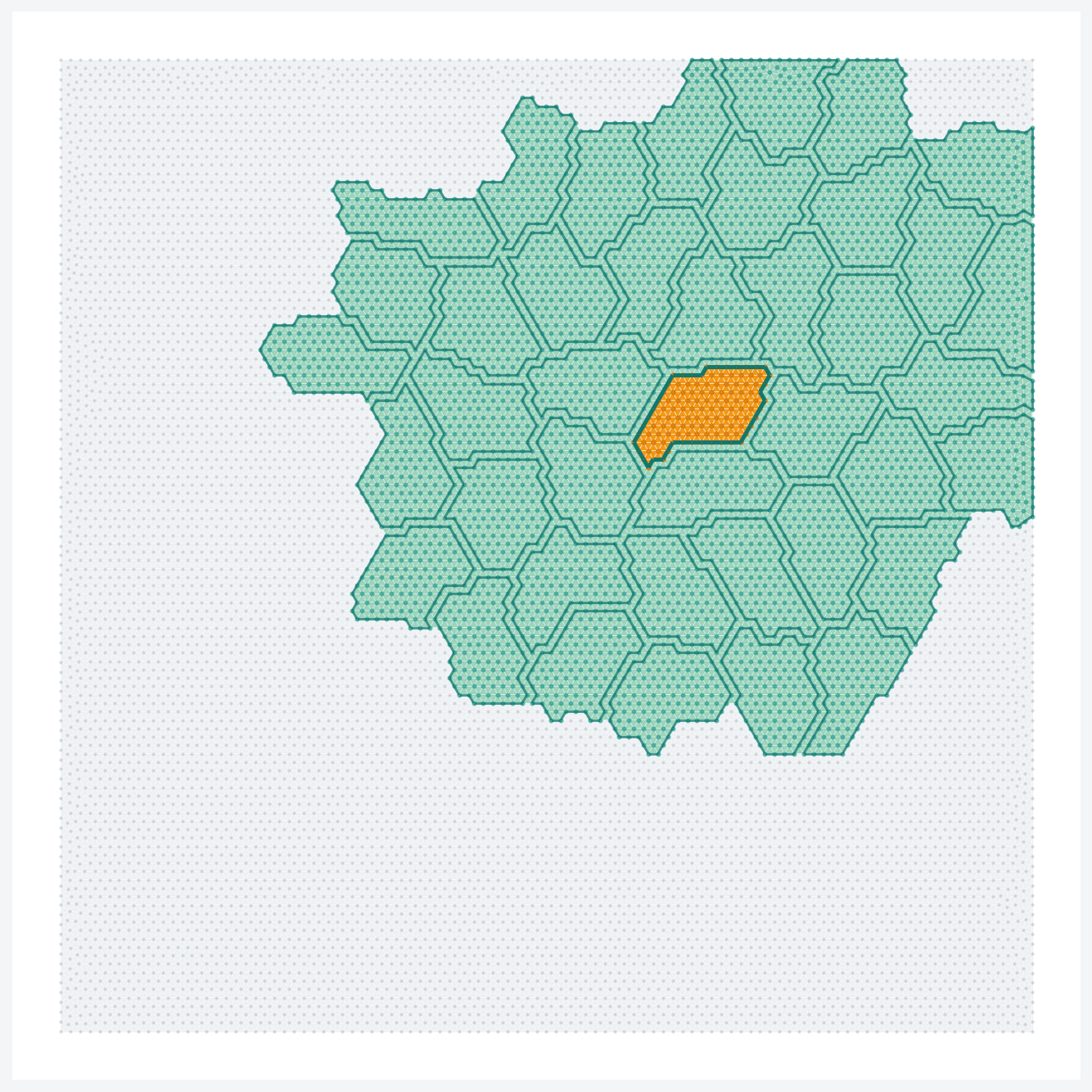}
        \label{overlap_3}
    \end{subfigure}
        \hspace{0.5em}%
    \begin{subfigure}{0.23\textwidth}
        \centering
        \includegraphics[width=\textwidth]{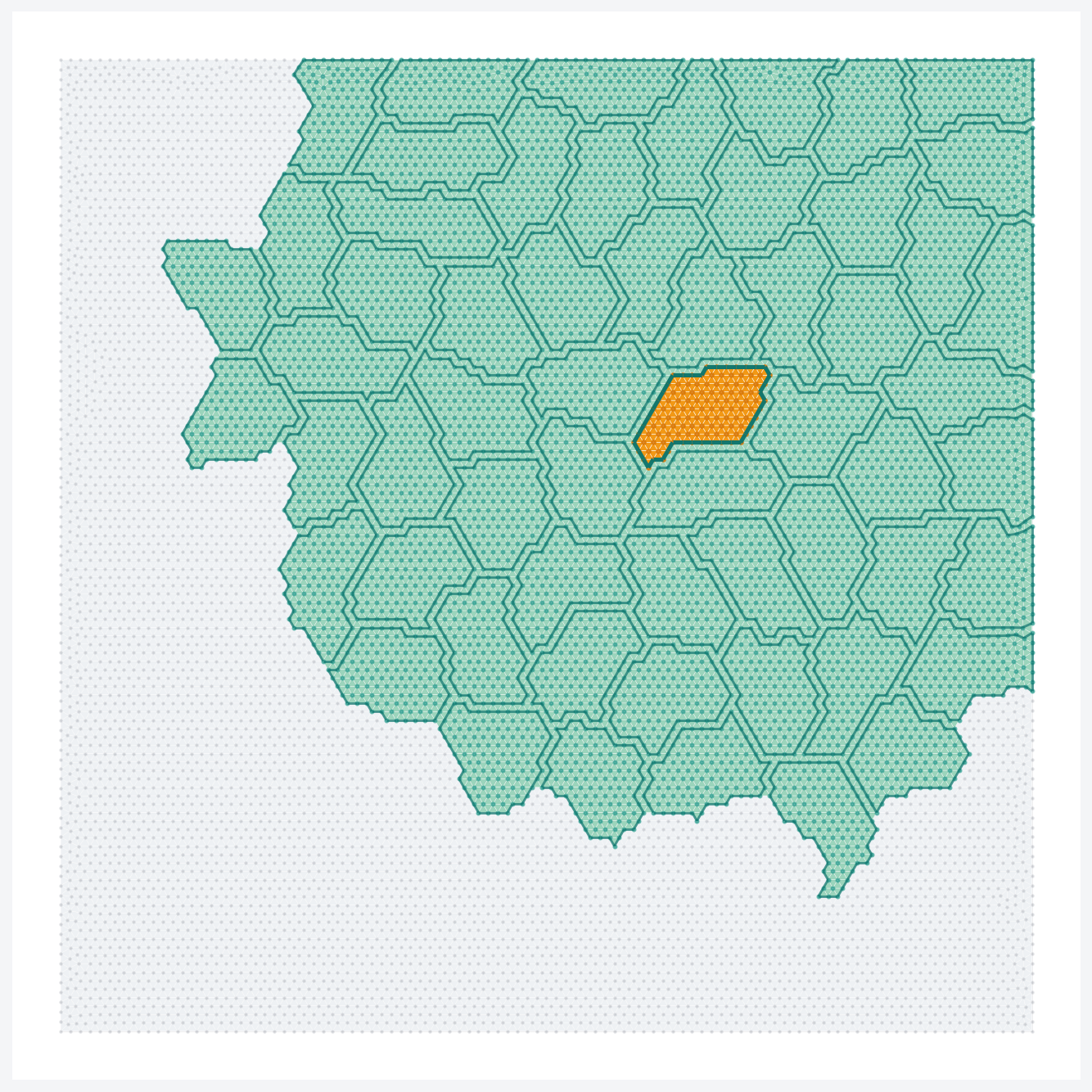}
        \label{overlap_4}
    \end{subfigure}
    \caption{Illustration of an arbitrary base subgraph and its corresponding oversampled regions, demonstrating 1-layer, 2-layer, 3-layer, and 4-layer overlaps (from left to right).}
    \label{fig:overlap}
\end{figure}

To quantitatively evaluate the approximation quality of the multiscale solver, we take the fine-scale direct solution $u_h$ as the reference and define the following relative $L^2$ and energy errors for a given number of overlapping layers $l$:
\[
e_{L^2}^m:=\frac{\norm{u_h-u_\mathup{cem}^l}_{L^2(\Omega)}}{\norm{u_h}_{L^2(\Omega)}},
\qquad
e_{a}^m:=\frac{\norm{u_h-u_\mathup{cem}^l}_{a}}{\norm{u_h}_{a}}.
\]
Here the energy norm is defined by
\[
\norm{v}_{a}^{2}=\int_{\Omega}k\nabla v\cdot\nabla v\,dx,
\]
and can be equivalently expressed in the algebraic matrix form as:
\[
\norm{\mathbf{v}}_{L^2(\Omega)}^{2}=\mathbf{v}^{T}M_h\mathbf{v},
\qquad
\norm{\mathbf{v}}_{a}^{2}=\mathbf{v}^{T}A\mathbf{v},
\]
where $M_h$ represents the fine-scale mass matrix.

We first examine the convergence behavior of the CEM-GMsFEM solution with respect to the number of oversampling layers $l$ and the number of partitioned subgraphs $n$. Here we use Case 1 with 290,126 vertices and $l=4$.
\begin{table}[ht!]
    \centering
    \caption{Relative $L^2$ and energy errors for different $n$ and $l$.}
    \label{tab:comparenm}
    \begin{tabular}{ccccc}
        \toprule
        n & 500 & 1000 & 2000 & 4000 \\
        \midrule
        
         $e_{L^2}^2$ &3.12\% &3.19\% &4.86\% & 7.15\%\\
        $e_{L^2}^3$ &2.34\% &1.24\% &1.73\% & 2.69\%\\
         $e_{L^2}^4$ &2.24\% &1.02\% &0.79\% & 1.10\%\\
         $e_{L^2}^5$ &2.22\% &0.97\% &0.55\% & 0.55\%\\
        \midrule
        
         $e_{a}^2$ &20.37\% &23.65\% &29.74\% & 35.65\%\\
         $e_{a}^3$ &13.21\% &13.30\% &17.43\% & 22.10\%\\
         $e_{a}^4$ &11.10\% &8.87\% &10.83\% & 13.88\% \\
         $e_{a}^5$ &10.46\% &7.13\% &7.45\% & 9.28\% \\
        \bottomrule
    \end{tabular}
\end{table}

Table \ref{tab:comparenm} reveals a clear coupling between the partition number $n$ and the oversampling layers $l$. For fixed $n$, increasing $l$ consistently reduces both $L^2$ and energy errors, indicating effective decay of localization error as the oversampling region expands. For example, at $n=4000$, the $L^2$ error decreases from $7.15\%$ ($l=2$) to $0.55\%$ ($l=5$), while the energy error decreases from $35.65\%$ to $9.28\%$. In contrast, when $l$ is small ($l=2,3$), increasing $n$ may deteriorate accuracy, which suggests that finer partitions require sufficiently large oversampling to capture cross-subgraph interactions. Once $l\ge 4$, this adverse trend is largely alleviated and finer partitions become competitive; the best $L^2$ error is $0.55\%$ (at $n=2000$ or $4000$, $l=5$), while the best energy error is $7.13\%$ (at $n=1000$, $l=5$).

Next we fix the number of subgraphs to $n=2000$ and examine the impact of the overlap layers $l$, the number of eigenvectors $Nov$, and the contrast $\kappa^*$ on the accuracy of the CEM solution.

Table \ref{tab:impact-lkm} demonstrates the robustness and accuracy of the CEM solution concerning the oversampling layers ($l$), the number of eigenvectors per subgraph ($Nov$), and the contrast ratio ($\kappa^*$). Several key observations can be drawn from the results:
First, for a fixed contrast and $Nov$, enlarging the oversampling region (i.e., increasing $l$ from 3 to 5) consistently and significantly reduces both the $L^2$ and energy errors, which confirms the exponential decay of the localization errors. 
Second, enriching the local multiscale space by increasing the number of eigenvectors $Nov$ effectively improves the accuracy. For instance, at $\kappa^*=10^5$ and $l=5$, the energy error decreases from $8.40\%$ to $6.21\%$ as $Nov$ increases from 3 to 6. 
Finally, the method exhibits excellent robustness with respect to the high contrast $\kappa^*$. As the contrast increases from $10^5$ to $10^6$, the relative errors do not deteriorate (and the energy errors actually decrease notably in all considered cases), indicating that the constructed multiscale basis functions successfully capture the dominant heterogeneous features of the media across varying levels of contrast.

\begin{table}[htbp]
    \centering
    \caption{Relative $L^2$ and Energy errors of the CEM solution for different $l$, $Nov$, and $\kappa^*$}
    \label{tab:impact-lkm}
    \begin{tabular}{cccccccc}
        \toprule
        & & \multicolumn{2}{c}{$l=3$} & \multicolumn{2}{c}{$l=4$} & \multicolumn{2}{c}{$l=5$} \\
        \cmidrule(lr){3-4} \cmidrule(lr){5-6} \cmidrule(lr){7-8}
        Contrast $\kappa^*$ & $Nov$ & $e_{L^2}^3$ & $e_{a}^3$ & $e_{L^2}^4$ & $e_{a}^4$ & $e_{L^2}^5$ & $e_{a}^5$ \\

        \midrule
        \multirow{4}{*}{$10^5$} & $3$ & 2.06\% & 18.61\% & 1.12\% & 11.89\% & 0.89\% & 8.40\% \\
        & $4$ & 1.73\% & 17.43\% & 0.79\% & 10.83\% & 0.55\% & 7.45\% \\
        & $5$ & 1.53\% & 16.40\% & 0.67\% & 10.04\% & 0.45\% & 6.80\% \\
        & $6$ & 1.34\% & 15.31\% & 0.58\% & 9.26\% & 0.40\% & 6.21\% \\
        \midrule
        \multirow{4}{*}{$10^6$} & $3$ & 1.14\% & 9.45\% & 0.94\% & 6.77\% & 0.90\% & 5.82\% \\
        & $4$ & 0.89\% & 8.46\% & 0.70\% & 5.77\% & 0.67\% & 4.81\% \\
        & $5$ & 0.74\% & 7.72\% & 0.56\% & 5.03\% & 0.54\% & 4.04\% \\
        & $6$ & 0.59\% & 7.02\% & 0.41\% & 4.43\% & 0.39\% & 3.46\% \\
        \bottomrule
    \end{tabular}
\end{table}

\subsection{Ununiform mesh with higher order finite element discretization}

In this section, we consider a more complex scenario where the underlying mesh is unstructured and the finite element discretization employs higher-order polynomial basis functions. This setting is particularly relevant for applications involving complex geometries or when enhanced accuracy is required. We generate an unstructured mesh for an L-shaped domain, defined by $[0,1]^2 \setminus [0.5,1]^2$, using Gmsh as shown in \ref{fig:lshaped}. Specifically, to accurately resolve the singularity and steep gradients characteristic of the re-entrant corner, we apply significant local mesh refinement at the inner corner $(0.5, 0.5)$, reducing the characteristic mesh size by a factor of 10 compared to the rest of the domain boundaries.
\begin{figure}[ht!]

    \centering
    \includegraphics[width=0.49\textwidth]{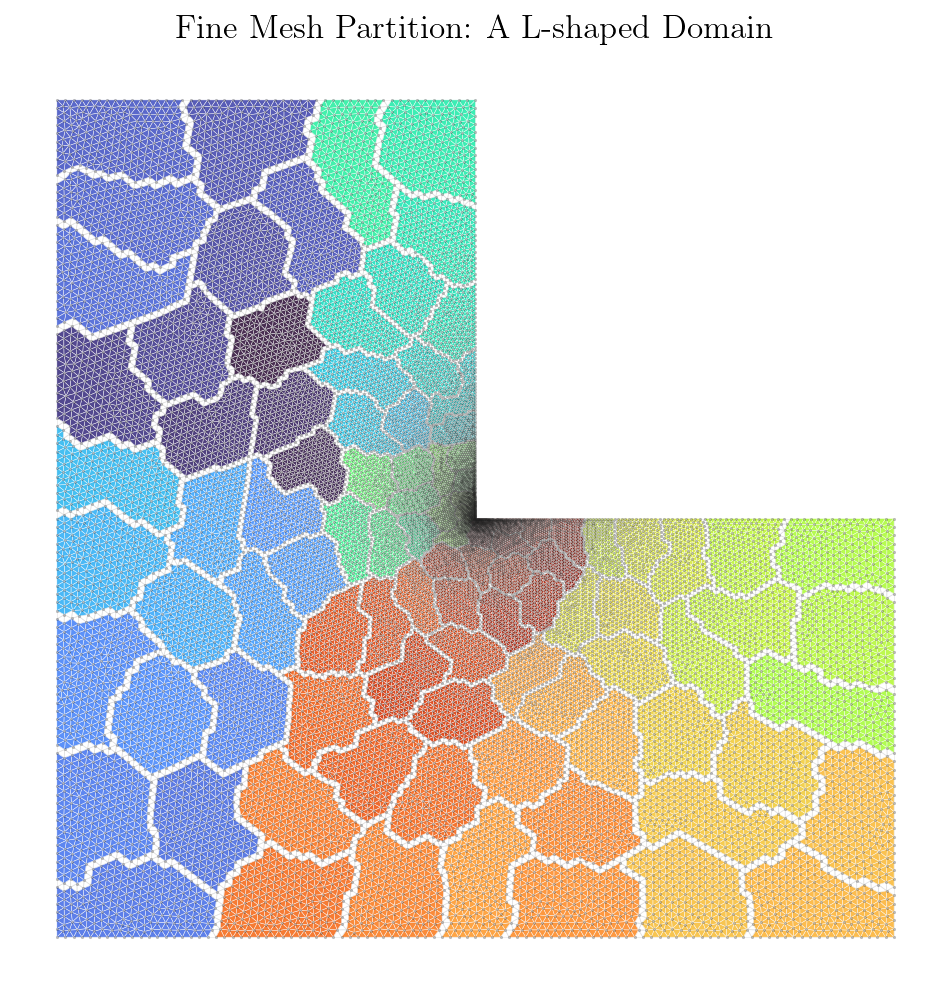}
    \caption{An unstructured mesh for an L-shaped domain, demonstrating local refinement near the reentrant corner.}
        \label{fig:lshaped}
\end{figure}

Here we fix the number of subgraphs to $n=2000$, the number of eigenvectors $Nov=4$ and vary the polynomial degree of the finite element basis functions from linear ($P_1$) to quadratic ($P_2$) and cubic ($P_3$) in the same mesh setting. The local spectral problems and energy-minimization problems are accordingly modified to accommodate the higher-order basis functions, which results in larger local systems but also richer approximation spaces.

Table \ref{tab:higher-order} presents the relative $L^2$ and energy errors for varying polynomial degrees ($P_1$, $P_2$, and $P_3$) and oversampling layers ($l$). As expected, increasing the oversampling layers $l$ leads to a rapid decay in both $L^2$ and energy errors across all polynomial degrees. For instance, with $P_2$ elements, the energy error decreases significantly from $24.92\%$ at $l=3$ to $9.59\%$ at $l=5$, and the corresponding $L^2$ error drops from $4.31\%$ to $0.72\%$. We also observe that for a fixed number of subgraphs ($n=2000$) and eigenvectors ($Nov=4$), increasing the polynomial degree from $P_1$ to $P_3$ expands the fine-scale degrees of freedom ($\#\mathcal{N}$) from 19,602 to 174,742. This results in a much richer and more detailed reference solution containing complex high-frequency spatial features, which makes the approximation slightly more challenging for a fixed macroscopic space, explaining the slight increase in relative errors. Nonetheless, the method still maintains a robust performance (e.g., around $1\%$ $L^2$ error for $l=5$ across all degrees), confirming that the CEM-GMsFEM framework effectively handles complex reference solutions discretised by higher-order finite elements on unstructured meshes.

\begin{table}[htbp]
    \centering
    \caption{Relative $L^2$ and Energy errors on the unstructured L-shaped mesh with $n=2000$ and $Nov=4$, varying polynomial degrees and oversampling layers $l$.}
    \label{tab:higher-order}
    \begin{tabular}{cccccccc}
        \toprule
        & & \multicolumn{2}{c}{$l=3$} & \multicolumn{2}{c}{$l=4$} & \multicolumn{2}{c}{$l=5$} \\
        \cmidrule(lr){3-4} \cmidrule(lr){5-6} \cmidrule(lr){7-8}
        Degree & $\#\mathcal{N}$ & $e_{L^2}^3$  & $e_{a}^3$  & $e_{L^2}^4$  & $e_{a}^4$  & $e_{L^2}^5$  & $e_{a}^5$  \\
        \midrule
        $P_1$ & 19602 & 2.69\% & 18.80\% & 1.64\% & 11.52\% & 1.35\% & 8.61\% \\
        $P_2$ & 77849 & 4.31\% & 24.92\% & 1.56\% & 14.60\% & 0.72\% & 9.59\% \\
        $P_3$ & 174742 & 5.04\% & 26.89\% & 1.92\% & 15.83\% & 1.04\% & 10.69\% \\
        \bottomrule
    \end{tabular}
\end{table}

\subsection{OpenMP acceleration performance}
\label{sec:openmp-performance}

In this subsection, we provide a reproducible benchmark framework for evaluating OpenMP acceleration without reporting separate offline and online timings. For all experiments presented here, we use the spatial network model depicted in \ref{fig:fine mesh} with a fixed high contrast $\kappa^* = 10^4$, while generating networks with different degrees of freedom. All numerical entries are intentionally left blank and will be completed after running the experiments.
We measure end-to-end wall-clock time on the workstation described at the beginning of this section (dual Intel Xeon Gold 6230, 80 cores, 1024 GB RAM). For the acceleration-effectiveness experiments in this subsection, we fix the OpenMP thread count at
\[
T=80,
\]
which is the maximum physical-thread setting available on our machine.
For each configuration, we record the two measurable runtimes
\[
t_{\mathrm{aux}}^{T},\qquad t_{\mathrm{cem}}^{T},
\]
where $t_{\mathrm{aux}}^T$ is the auxiliary-space construction time and $t_{\mathrm{cem}}^T$ is the CEM-space construction time. We define
\[
t_{\Sigma}^{T}=t_{\mathrm{aux}}^{T}+t_{\mathrm{cem}}^{T},
\]
and denote by $t_{\mathrm{aux}}^{\mathrm{noomp}}$, $t_{\mathrm{cem}}^{\mathrm{noomp}}$, and $t_{\Sigma}^{\mathrm{noomp}}$ the corresponding runtimes without OpenMP. We then compute
\[
S_{\mathrm{aux}}^{T}=\frac{t_{\mathrm{aux}}^{\mathrm{noomp}}}{t_{\mathrm{aux}}^{T}},\qquad
S_{\mathrm{cem}}^{T}=\frac{t_{\mathrm{cem}}^{\mathrm{noomp}}}{t_{\mathrm{cem}}^{T}},\qquad
S_{\Sigma}^{T}=\frac{t_{\Sigma}^{\mathrm{noomp}}}{t_{\Sigma}^{T}},
\]
\[
\eta_{\mathrm{aux}}^{T}=\frac{S_{\mathrm{aux}}^{T}}{T}\times 100\%,\qquad
\eta_{\mathrm{cem}}^{T}=\frac{S_{\mathrm{cem}}^{T}}{T}\times 100\%,
\]
where $S_{\mathrm{aux}}^{T}$ and $S_{\mathrm{cem}}^{T}$ are stage-wise speedups at fixed threads count $T$, $\eta_{\mathrm{aux}}^{T}$ and $\eta_{\mathrm{cem}}^{T}$ are stage-wise parallel efficiencies, and $S_{\Sigma}^{T}$ is the combined speedup with No OpenMP as baseline.

\subsubsection{Acceleration efficiency}
To evaluate acceleration effectiveness beyond one fixed benchmark, we design a parameterized speedup study over problem size and multiscale settings. For each configuration
\[
\omega=(\#\mathcal{N},n,l,Nov),
\]
where $\#\mathcal{N}$ is the number of vertices (degrees of freedom), we record $t_{\mathrm{aux}}$, $t_{\mathrm{cem}}$, and $t_{\Sigma}=t_{\mathrm{aux}}+t_{\mathrm{cem}}$ for both No OpenMP and OpenMP runs. 

The acceleration study is organized into two groups.
\begin{enumerate}
    \item \textbf{DoF sweep (problem-size effect).} Fix $(n,l,Nov)=(500,4,4)$ and vary
    \[
   \#\mathcal{N}\in\{11822,23608,46687,93219,290126\}.
    \]
    This group isolates how speedup changes with global degrees of freedom.
    \item \textbf{Parameter sweep (algorithmic effect).} Fix $\#\mathcal{N}=93219$ and perform one-factor-at-a-time scans:
    \[
    n\in\{500,1000,2000,4000\},\quad l\in\{2,3,4,5\},\quad Nov\in\{3,4,5,6\},
    \]
    while keeping the other two parameters at the baseline $(n,l,Nov)=(2000,4,4)$.
\end{enumerate}

For every test point, use fixed OpenMP thread count $T=80$ and one No OpenMP baseline, run one warm-up plus repeated runs, and report mean values. This protocol provides a direct answer to the two practical questions: how acceleration varies with problem size, and how sensitive speedup is to $(n,l,Nov)$ under the machine-limit thread setting.

\begin{table}[ht!]
    \centering
    \caption{Acceleration versus degrees of freedom with fixed $(n,l,Nov)=(500,4,4)$.}
    \label{tab:openmp-acc-dof}
    \begin{tabular}{cccccccc}
        \hline
         $\#\mathcal{N}$ & $n$ & $l$ & $Nov$ & $t_{\Sigma}^{\mathrm{noomp}}$ (s) & $t_{\Sigma}^{80}$ (s) & $S_{\Sigma}^{80}$ & $\eta_{\Sigma}^{80}$ (\%) \\
        \hline
        11822 & 500 & 4 & 4 & \texttt{47.88} & \texttt{2.02} & \texttt{23.70} & \texttt{29.63} \\
        23608 & 500 & 4 & 4 & \texttt{37.71} & \texttt{1.32} & \texttt{28.57} & \texttt{35.71} \\
        46687 & 500 & 4 & 4 & \texttt{104.87} & \texttt{3.54} & \texttt{29.62} & \texttt{37.03} \\
        93219 & 500 & 4 & 4 & \texttt{359.66} & \texttt{13.27} & \texttt{27.10} & \texttt{33.88} \\
        290126 & 500 & 4 & 4 & \texttt{3228.37} & \texttt{122.06} & \texttt{26.45} & \texttt{33.06} \\
        \hline
    \end{tabular}
\end{table}

\begin{table}[ht!]
    \centering
    \caption{Acceleration comparison across different $(n,l,Nov)$ at fixed $\#\mathcal{N}=93219$.}
    \label{tab:openmp-acc-params}
    \begin{tabular}{cccccccc}
        \hline
        Case  & $n$ & $l$ & $Nov$ & $t_{\Sigma}^{\mathrm{noomp}}$ (s) & $t_{\Sigma}^{80}$ (s) & $S_{\Sigma}^{80}$ & $\eta_{\Sigma}^{80}$ (\%) \\
        \midrule
        baseline  & 2000 & 4 & 4 & \texttt{155.02} & \texttt{4.35} & \texttt{35.64} & \texttt{44.55} \\
        n-scan  & 500 & 4 & 4 & \texttt{359.66} & \texttt{13.27} & \texttt{27.10} & \texttt{33.88} \\
        n-scan  & 1000 & 4 & 4 & \texttt{215.07} & \texttt{7.12} & \texttt{30.21} & \texttt{37.76} \\
        n-scan  & 4000 & 4 & 4 & \texttt{176.95} & \texttt{4.42} & \texttt{40.03} & \texttt{50.04} \\
        l-scan  & 2000 & 2 & 4 & \texttt{79.02} & \texttt{2.03} & \texttt{38.93} & \texttt{48.66} \\
        l-scan  & 2000 & 3 & 4 & \texttt{110.73} & \texttt{3.04} & \texttt{36.42} & \texttt{45.53} \\
        l-scan  & 2000 & 5 & 4 & \texttt{211.85} & \texttt{7.25} & \texttt{29.22} & \texttt{36.53} \\
        Nov-scan  & 2000 & 4 & 3 & \texttt{145.78} & \texttt{4.34} & \texttt{33.59} & \texttt{41.99} \\
        Nov-scan  & 2000 & 4 & 5 & \texttt{157.65} & \texttt{4.44} & \texttt{35.51} & \texttt{44.38} \\
        Nov-scan  & 2000 & 4 & 6 & \texttt{154.15} & \texttt{4.56} & \texttt{33.80} & \texttt{42.26} \\
        \bottomrule
    \end{tabular}
\end{table}

Table \ref{tab:openmp-acc-dof} presents the speedup and parallel efficiency trends as the global degrees of freedom ($\#\mathcal{N}$) scale from $11,822$ to $290,126$ under a fixed algorithmic configuration $(n,l,Nov)=(500,4,4)$. Remarkably, the OpenMP implementation maintains a robust and consistent acceleration across all problem dimensions, delivering combined speedups ranging from $23.70\times$ to $29.62\times$. The parallel efficiency peaks at $37.03\%$ for intermediate problem sizes ($\#\mathcal{N}=46,687$) before experiencing a marginal decline to $33.06\%$ at the largest scale ($\#\mathcal{N}=290,126$). This behavior underscores the algorithm's excellent scalability with respect to spatial complexity. Minor efficiency degradation for massive networks is an anticipated artifact of shared-memory architectures, where an exponentially growing memory footprint intensifies cache capacity contention and saturates the core-to-memory bandwidth across the $80$ hardware threads.

Table \ref{tab:openmp-acc-params} isolates the impact of multiscale algorithmic parameters---namely, the local spectral basis size $n$, oversampling layers $l$, and the neighborhood overlap $Nov$---on computational efficiency at a fixed scale ($\#\mathcal{N}=93,219$). A prominent scaling trend emerges when varying $n$: as the basis dimension $n$ increases from $500$ to $4000$, the parallel efficiency steadily climbs from $33.88\%$ to an impressive $50.04\%$, yielding a $40.03\times$ overall speedup. This phenomenon indicates that larger local eigenspaces generate denser, more compute-intensive tasks per local patch. These dense tasks effectively dominate the execution runtime, thereby perfectly amortizing the inherent thread synchronization overheads and enhancing thread availability. 

Conversely, expanding the oversampling layers $l$ from $2$ to $5$ triggers a gradual decline in parallel efficiency (from $48.66\%$ down to $36.53\%$). Because broader oversampling domains invariably increase the local spatial patch sizes processed by individual threads, the resulting heavier memory traffic leads to cache capacity misses and bandwidth bottlenecks. Finally, modulating the overlapping parameter $Nov$ produces minimal computational perturbation, with measured speedups remaining tightly bounded between $33.59\times$ and $35.51\times$. This remarkable parametric stability confirms that the workload distribution remains uniformly balanced, rendering the proposed parallel framework exceptionally resilient to structural variations within the overlapping multiscale core configurations.

\subsubsection{Strong-scaling}

Table \ref{tab:openmp-strong-template} shows a consistent strong-scaling trend for the dominant CEM stage and a saturation pattern for the lightweight auxiliary stage. For CEM, the runtime decreases monotonically from $405.25$ s (No OpenMP) to $55.64$ s ($T=10$), $32.53$ s ($T=20$), $23.45$ s ($T=40$), and $20.82$ s ($T=80$), corresponding to speedups of $7.28$, $12.46$, $17.28$, and $19.46$. This monotone reduction confirms that the main computational kernel benefits steadily from added threads throughout the tested range.
\begin{table}[ht!]
    \centering
    \caption{Strong-scaling comparison of stage-wise speedups and efficiencies (fixed $n=2000$, $l=4$, $Nov=4$, $\kappa^*=10^5$).}
    \label{tab:openmp-strong-template}
    \begin{tabular}{ccccccc}
        \hline
        Threads $T$ & $t_{\mathrm{aux}}$ (s) & $S_{\mathrm{aux}}(T)$ & $\eta_{\mathrm{aux}}(T)$ (\%) & $t_{\mathrm{cem}}$ (s) & $S_{\mathrm{cem}}(T)$ & $\eta_{\mathrm{cem}}(T)$ (\%) \\
        \midrule
        No OpenMP & \texttt{8.54} & \texttt{1.00} & -- & \texttt{405.25} & \texttt{1.00} & -- \\
        10 & \texttt{0.36} & \texttt{23.72} & \texttt{237.22} & \texttt{55.64} & \texttt{7.28} & \texttt{72.83} \\
        20 & \texttt{0.33} & \texttt{25.88} & \texttt{129.39} & \texttt{32.53} & \texttt{12.46} & \texttt{62.29} \\
        40 & \texttt{0.31} & \texttt{27.55} & \texttt{68.87} & \texttt{23.45} & \texttt{17.28} & \texttt{43.20} \\
        80 & \texttt{0.35} & \texttt{24.40} & \texttt{30.50} & \texttt{20.82} & \texttt{19.46} & \texttt{24.33} \\
        \bottomrule
    \end{tabular}
\end{table}
For the auxiliary stage, the runtime drops sharply from $8.54$ s to $0.36$--$0.31$ s for $T=10$--$40$, then slightly increases to $0.35$ s at $T=80$. The associated efficiencies above $100\%$ at low thread counts indicate apparent superlinear behavior, which is reasonable here because this stage is very short and sensitive to cache effects and runtime noise; after $T=40$, overheads (thread scheduling, synchronization, and NUMA traffic) start to dominate this small kernel. More importantly, end-to-end performance is governed by CEM: with $t_{\Sigma}=t_{\mathrm{aux}}+t_{\mathrm{cem}}$, total runtime reduces from $413.79$ s to $56.00$, $32.86$, $23.76$, and $21.17$ s as $T$ increases from No OpenMP to $10$, $20$, $40$, and $80$, respectively, yielding an overall $19.55\times$ acceleration at $T=80$. The incremental gain after each thread doubling ($7.39\times\rightarrow12.59\times\rightarrow17.42\times\rightarrow19.55\times$) also indicates diminishing returns at high concurrency, consistent with shared-memory scaling limits.

\subsubsection{Weak-scaling}
To evaluate weak scaling using the current data format, we increase the problem size and thread count proportionally. A suggested configuration is
\[
(n,\#\mathcal{N},T)\in\{(100,11822,10),(200,23608,20),(400,46687,40),(800,93219,80)\}.
\]
For each thread count $T$ (with matched problem size), we report the stage-wise runtimes $t_{\mathrm{aux}}(T)$ and $t_{\mathrm{cem}}(T)$, and the stage-wise weak-scaling efficiencies:
\[
\eta_{\mathrm{aux}}^{\mathrm{weak}}(T)=\frac{t_{\mathrm{aux}}(10)}{t_{\mathrm{aux}}(T)}\frac{\#\mathcal{N}(T)}{\#\mathcal{N}(10)}\times 100\%,\qquad
\eta_{\mathrm{cem}}^{\mathrm{weak}}(T)=\frac{t_{\mathrm{cem}}(10)}{t_{\mathrm{cem}}(T)}\frac{\#\mathcal{N}(T)}{\#\mathcal{N}(10)}\times 100\%.
\]
Table \ref{tab:openmp-weak-template} is used to report weak-scaling performance based on these four indicators.

\begin{table}[ht!]
    \centering
    \caption{Weak-scaling comparison of stage-wise runtimes and stage-wise weak-scaling efficiency.}
    \label{tab:openmp-weak-template}
    \begin{tabular}{ccccccc}
        \hline
         Vertices $\#\mathcal{N}$ & Subgraphs $n$ & Threads $T$ & $t_{\mathrm{aux}}$ (s) & $\eta_{\mathrm{aux}}^{\mathrm{weak}}(T)$ (\%) & $t_{\mathrm{cem}}$ (s) & $\eta_{\mathrm{cem}}^{\mathrm{weak}}(T)$ (\%) \\
        \midrule
         11822 & 100 & 10 & \texttt{0.14} & \texttt{100} & \texttt{3.52} & \texttt{100} \\
         23608 & 200 & 20 & \texttt{0.18} & \texttt{155.32} & \texttt{5.19} & \texttt{135.44} \\
         46687 & 400 & 40 & \texttt{0.29} & \texttt{190.65} & \texttt{6.83} & \texttt{203.53} \\
         93219 & 800 & 80 & \texttt{5.34} & \texttt{20.67} & \texttt{11.46} & \texttt{242.20} \\
        \bottomrule
    \end{tabular}
\end{table}

\begin{figure}[ht!]
    \centering
  \resizebox{\textwidth}{!}{\input{scaling.pgf}}
    \caption{Scaling performance}
    \label{fig:openmp-weak-template}
\end{figure}

In the weak-scaling tests, the problem size and thread count are increased proportionally from $(n,N_v,T)=(100,11822,10)$ to $(800,93219,80)$. Overall, the weak-scaling performance is still acceptable for $T\le 40$, since the runtimes remain relatively stable and no obvious bottleneck appears in this range. However, the performance at $T=80$ is clearly worse, mainly because the auxiliary stage time increases sharply.

Possible reasons include larger thread-management and synchronization overhead at high concurrency, stronger memory-bandwidth and NUMA penalties (more remote memory access), and task-granularity imbalance across threads. These factors can significantly reduce efficiency when the thread count reaches $80$.

\section{Conclusions}
\label{conclusions}
In this work, we introduce a geometry-parameter-independent, purely algebraic multiscale method for highly heterogeneous spatial networks. The proposed approach employs an efficient multiscale model reduction framework on spatial networks to significantly reduce computational costs. Local spectral problems are first solved on each subgraph to construct auxiliary multiscale basis functions. These are then used to generate multiscale basis functions via an energy-minimization principle applied to oversampling subgraphs. By incorporating a subgraph-wise Poincaré inequality, we define a geometry-independent constant $C_{\mathrm{po}}$, which underpins the fully algebraic nature of the method. Abstract partition of unity functions are introduced for spatial network models to facilitate the convergence analysis. Under appropriate assumptions and with suitably chosen oversampling layers, we establish an $O(C_{\mathrm{po}})$ convergence rate that is independent of the heterogeneity in edge weights and node degrees. Moreover, we show that the number of small eigenvalues is determined by the number of disconnected high-contrast inclusions or channels. A comprehensive set of numerical experiments confirms the accuracy and robustness of the proposed method.
\section*{Declaration of competing interest}

The authors declare that they have no known competing financial interests or personal relationships that could have appeared
to influence the work reported in this paper.

\section*{Declaration of Generative AI and AI-assisted technologies in the writing process}

During the preparation of this work the authors used ChatGPT in order to improve readability and language. After
using this tool, the authors reviewed and edited the content as needed and take full responsibility for the content of the
publication.

\section*{Acknowledgments}
EC's research is partially supported by the Hong Kong RGC General Research Fund (Project numbers: 14305624 and 14304525).

\bibliographystyle{siamplain}

\end{document}